\newtheorem{thm}{Theorem}[section]
\newtheorem{cor}[thm]{Corollary}
\newtheorem{lem}[thm]{Lemma}
\theoremstyle{definition}
\theoremstyle{definition}
\theoremstyle{definition}
\newtheorem{ex}[thm]{Example}\theoremstyle{definition}
\newtheorem{rem}[thm]{Remark} \numberwithin{equation}{section}
\newcommand{\R}{\mathbb R}
\newcommand{\Z}{\mathbb Z}
\newcommand{\E}{\mathbb E}
\newcommand{\T}{\mathbb T}
\newcommand{\N}{\mathbb N}
\def\P{\mathbb P}
\def\1{\mathbb I}
\def\e{\epsilon}
\def\ep{\epsilon}
\begin{document}

\title[Gradient bounds and large time behavior]
{Gradient bounds for nonlinear degenerate parabolic equations
and application to large time behavior of systems}

\author{Olivier Ley\and Vinh Duc Nguyen}
\address{Olivier Ley (corresponding author): IRMAR, INSA de Rennes, 35708 Rennes, France} 
\email{olivier.ley@insa-rennes.fr}
\address{Vinh D. Nguyen: LAMA,  Universit\'e Paris-Est Cr\'eteil, 94010 Cr\'eteil, France} 
\email{vinh.nguyen@math.cnrs.fr}

\begin{abstract}
We obtain new oscillation and gradient bounds for the
viscosity solutions of fully nonlinear degenerate elliptic equations
where the Hamiltonian is a sum of a sublinear and a superlinear
part in the sense of Barles and Souganidis (2001).
We use these bounds to study the asymptotic behavior 
of weakly coupled systems of fully nonlinear parabolic
equations. Our results apply to some ``asymmetric systems''
where some equations contain a sublinear Hamiltonian
whereas the others contain a superlinear one.
Moreover, we can deal with some particular case
of systems containing some degenerate equations 
using a generalization of the strong maximum principle for systems.
\end{abstract}

\subjclass[2010]{Primary 35J70; Secondary 35K40, 35B40, 35B45, 35B50 35F21, 49L25}
\keywords{Nonlinear degenerate parabolic equations, Nonlinear degenerate elliptic equations,
Hamilton-Jacobi equations, monotone systems, gradient bounds, oscillation, 
strong maximum principle, ergodic problem,
asymptotic behavior, viscosity solutions}

\date{\today}

\maketitle


\section{Introduction}

One of the main result of this work is to obtain new results
about the large time behavior of the solution $u(x,t)=(u_1(x,t),\cdots, u_m(x,t))$
of the fully nonlinear parabolic system
\begin{eqnarray}\label{sys-general}
&& \left\{
\begin{array}{l}
\displaystyle\frac{\partial u_i}{\partial t}+\mathop{\rm sup}_{\theta\in\Theta}
\left\{-{\rm trace}(A_{\theta i}(x) D^2u_i)+H_{\theta i}(x,Du_i )\right\}+\sum_{j=1}^{m}d_{ij}u_j=0,\\
\hspace*{8cm}
(x,t)\in\T^N\times (0,+\infty), 
\\[3mm]
u_i(x,0)=u_{0i}(x),  \qquad x\in \T^N,
\end{array}
\right.
1\leq i\leq m,
\end{eqnarray}
in the periodic setting ($\T^N$ denotes the flat torus $\R^N/\Z^N$), where
the equations are linearly coupled through a matrix $D=(d_{ij})_{ij}$ which is assumed 
to be monotone and irreducible. The set $\Theta$ is a
metric space, the diffusion matrices can be written $A_{\theta i}(x)=\sigma_{\theta i}(x)\sigma_{\theta i}(x)^T$
with  $\sigma_{\theta i}$ bounded Lipschitz continuous and $H_i,u_{0i}$ are continuous.
\smallskip

To simplify the presentation, we present our results in the simplest case without dependence
with respect to $\theta$ and for $m=2.$ See Section~\ref{sec:exple-sys} 
for some discussions about the general case. We then consider
\begin{eqnarray}
\label{C}
&& \left\{
\begin{array}{l}
\displaystyle\frac{\partial u_1}{\partial t}-{\rm trace}(A_1(x) D^2u_1)+H_1(x,Du_1 )+u_1-u_2=0,\\[2mm]
\displaystyle\frac{\partial u_2}{\partial t}-{\rm trace}(A_2(x) D^2u_2)+H_2(x,Du_2 )+u_2-u_1=0,
\ \ \ (x,t)\in\T^N\times (0,+\infty),
\\[2mm]
u_1(x,0)=u_{01}(x), \ u_2(x,0)=u_{02}(x)  \qquad x\in \T^N,
\end{array}
\right.
\end{eqnarray}
The precise assumptions on the Hamiltonians $H_i$
will be explained below.
\smallskip

We prove the following asymptotic behavior of the solution,
\begin{eqnarray}\label{asympt-intro}
u_i(x,t)+c_i t\to v_i(x) \quad \text{uniformly in $\T^N$ as $t \to \infty$},
\text{ for $i=1,2$},
\end{eqnarray}
where $c=(c_1,c_2)\in\R^2$ and $v =(v_1,v_2)\in W^{1,\infty}(\T^N)^2$
are solutions of the so-called ergodic problem
\begin{eqnarray}
\label{E}
&& \left\{
\begin{array}{l}
\displaystyle-{\rm trace}(A_1(x) D^2v_1)+H_1(x,Dv_1 )+v_1-v_2=c_1,\\[2mm]
\displaystyle-{\rm trace}(A_2(x) D^2v_2)+H_2(x,Dv_2 )+v_2-v_1=c_2,\quad x\in\T^N.
\end{array}
\right.
\end{eqnarray}

Let us give immediately
one of the most striking application of our result.
We are able to prove the convergence for {\em asymmetric systems}
like
\begin{eqnarray}\label{exple-mixed}
&& \left\{
\begin{array}{ll}
\displaystyle\frac{\partial u_1}{\partial t}-{\rm trace}(A_1(x) D^2u_1)
+ \langle b_1(x),Du_1\rangle+\ell_1(x) +u_1 -u_2=0, & \\[3mm]
\displaystyle\frac{\partial u_2}{\partial t}-{\rm trace}(A_2(x) D^2u_2)
+|Du_2|^2+\ell_2(x) -u_1 + u_2=0, &
\end{array}
\right.
\end{eqnarray}
where $A_1$ is uniformly elliptic and $A_2$ may be degenerate.
The name asymmetric means that different equations can have different natures
as above: the first one contains a sublinear Hamiltonian and is uniformly
elliptic whereas the second one contains a superlinear one and may be
degenerate. The general framework is presented below. 
To explain the main difficulty to prove this result, let us recall
the related results for scalar equations.
\smallskip

To study the large time behavior for parabolic nonlinear equations
\begin{eqnarray}\label{cauchy-scalaire}
\displaystyle\frac{\partial u}{\partial t}
-{\rm trace}(A(x) D^2u) 
+H(x,Du )=0,\quad
(x,t)\in\T^N\times (0,+\infty), 
\end{eqnarray}
one has first to establish {\em uniform} gradient bounds
for the stationary equation
\begin{eqnarray}\label{approx-scal}
\e \phi^\e-{\rm trace}(A(x) D^2\phi^\e)
+  H(x, D\phi^\e)=0, & x\in\T^N, \quad \e >0.
\end{eqnarray}
By uniform gradient bounds, we mean
\begin{eqnarray}\label{grad-intro}
|D\phi^\e|_\infty\leq K, \ \ \ 
\text{where $K$ is independent of $\e.$}
\end{eqnarray}
This condition is crucial to be able to send $\e$ to 0 in~\eqref{approx-scal}
in order to solve the ergodic
problem~\eqref{E}, which is a first step when trying to
prove~\eqref{asympt-intro}.
\smallskip

Barles and Souganidis~\cite{bs01} obtained the first results concerning both
estimates like~\eqref{grad-intro} and the asymptotic behavior~\eqref{asympt-intro}
for scalar equations ($m=1$) with $A(x)=I$ in two contexts. The first one is
for {\em sublinear Hamiltonians}, i.e., for Hamiltonians with a sublinear growth 
with respect to the gradient. A typical example is
\begin{eqnarray*}
H(x,p)=\langle b(x),p\rangle+\ell(x), \quad \text{$b\in C(\T^N;\R^N),$ $\ell \in C(\T^N).$}
\end{eqnarray*}
The second context is
for {\em superlinear Hamiltonians}. The precise assumption is more involved
(see~\eqref{BS-originale}) and designed to allow the use of weak Bernstein-type
arguments (\cite{barles91a}). The most important example is Hamiltonians
with a superlinear growth with respect to the gradient
\begin{eqnarray}\label{typic-super-intro}
H(x,p)=a(x)|p|^{1+\alpha}+\ell(x), \quad \text{$\alpha >0,$ $a,\ell \in C(\T^N)$ and $a>0.$}
\end{eqnarray}
As a consequence of these bounds together with the strong maximum principle,
they obtain the convergence for the solutions of~\eqref{cauchy-scalaire}
when either $H$ is sublinear or $H$ is superlinear.
\smallskip

Using the extension of viscosity solutions to monotone systems
of parabolic equations by Ishii and Koike~\cite{ik91b}, it is
not difficult to adapt the results of~\cite{bs01} to the case
of {\em sublinear} systems~\eqref{C} (i.e., systems for which {\em all} the
Hamiltonians are sublinear in the sense
of~\cite{bs01}) on the one hand, and to the case of {\em superlinear}
systems on the other hand. 
In this work, we focus on the more delicate issue of {\em asymmetric
systems} like~\eqref{exple-mixed} containing both sublinear and superlinear
Hamiltonians. 
The main difficulty is that the proofs of~\eqref{grad-intro} 
in~\cite{bs01} are completely different in the two contexts.
In the case of sublinear Hamiltonians, a method due 
to~Ishii and Lions~\cite{il90} allows to take profit of
the uniform ellipticity of the equation
to control the sublinear terms coming from the Hamiltonian.
Whereas, up to our knowledge,
the strategy to prove~\eqref{grad-intro} for superlinear Hamiltonians
relies on a weak Bernstein
method needing an exponential-type change of function like 
$e^{w^\e}=\phi^\e - {\rm min}_{\T^N}\phi^\e+1$
in order to take advantage of the superlinear property of the Hamiltonian.
In the case of systems, one has to perform this
exponential change in all equations, producing 
quadratic terms of the form $|\sigma(x) Dw^\e|^2.$
These latter terms are dramatic in equations with sublinear
Hamiltonians since they are not anymore under the control
of the ellipticity using the usual proof.
\smallskip

The new idea to overcome this difficulty is
to establish first uniform oscillation bounds
\begin{eqnarray}\label{osc-intro}
{\rm osc}(\phi^\e):= \mathop{\rm sup}_{\T^N} \phi^\e
- \mathop{\rm inf}_{\T^N} \phi^\e \leq K \ \ \ 
\text{where $K$ is independent of $\e,$}
\end{eqnarray}
for the solution of~\eqref{approx-scal}.
The point is that the proof of the oscillation bound
does not need any exponential change of variable and so 
will work for asymmetric systems.
This gives some uniform bounds for the new function $w^\e$
and we are able to ``localize'' the proof (see~\eqref{xmoinsy} for details)
allowing to control the bad quadratic term.
It is worth mentioning that our proof of the oscillation bound works 
in very general settings and is a new result interesting by itself.
For instance,~\eqref{osc-intro} holds for the solutions of~\eqref{approx-scal} 
as soon as
\begin{eqnarray}\label{Hsur-lin}
\frac{H(x,p)}{|p|}\to +\infty \ \text{ as $|p|\to +\infty$ uniformly with respect to
$x.$}
\end{eqnarray}

Taking advantage of this oscillation bound, we are in fact able
to produce a kind of unified proof of the sublinear and superlinear
cases of~\cite{bs01}. More precisely, we obtain the gradient
bound~\eqref{grad-intro} for~\eqref{approx-scal} when 
\begin{eqnarray*}
H=\underline{H} + \overline{H},
\end{eqnarray*}
where $\underline{H}$ is a sublinear Hamiltonian, i.e, having a
sublinear growth,
\begin{eqnarray*}
|\overline{H}(x,p)|\leq C(1+|p|),
\end{eqnarray*}
and $\overline{H}$ is a superlinear one. 
An important feature, which will be crucial when dealing
with asymmetric systems, is that we allow 
$\underline{H}$ or/and $\overline{H}$ to be zero
which is natural for $\underline{H}$ but not so
for $\overline{H}.$ As far as the precise definition
of superlinear Hamiltonian is concerned, we propose two 
definitions,
one when~\eqref{approx-scal} is uniformly elliptic,
see~\eqref{BS-ell}, 
which generalizes slightly the one of~\cite{bs01}
and a stronger one, see~\eqref{BS}, which allows to deal with degenerate 
equations~\eqref{approx-scal}. Both of them include of course typical 
superlinear Hamiltonians like~\eqref{typic-super-intro} 
and  $\overline{H}$ may be zero in some cases.
We refer the reader to Section~\ref{sec:exple1} for comments and examples.
\smallskip

Using this new gradient bound for scalar equations, we are 
in position to extend it to systems
\begin{eqnarray}\label{syseps22intro}
&& \left\{
\begin{array}{l}
\e \phi_1^\e-{\rm trace}(A_1(x) D^2\phi_1^\e)+H_1(x,D\phi_1^\e )+\phi_1^\e-\phi_2^\e=0,\\[2mm]
\e \phi_2^\e-{\rm trace}(A_2(x) D^2\phi_2^\e)+H_2(x,D\phi_2^\e )+\phi_2^\e-\phi_1^\e=0, \ \ \ x\in\T^N.
\end{array}
\right.
\end{eqnarray}
An immediate consequence is that we can solve the ergodic problem~\eqref{E}
extending the classical by now proofs of \cite{lpv86, al98} to the case of our
systems. We then prove the convergence~\eqref{asympt-intro}.
As in~\cite{bs01}, the proof of the convergence is based on the strong maximum
principle but let us mention that we establish a new version of the strong
maximum principle for systems which may contain some degenerate equations,
see Theorem~\ref{smp-sys} for details. 
In particular, the result holds for~\eqref{exple-mixed};
See Section~\ref{sec:exple-sys} for more examples and discussions.
\smallskip

Let us turn to an overview of related results in the litterature.
The ideas of the proof of gradient bounds in viscosity theory
using the uniform ellipticity of the equation are due to Ishii and Lions~\cite{il90},
see also~\cite{cil92, barles08} and the references therein.
Gradient bounds for superlinear-type Hamiltonians can be found in
Lions~\cite{lions82} and Barles~\cite{barles91a}, see also Lions and Souganidis~\cite{ls05}.
These ideas were used in Barles and Souganidis~\cite{bs01}
as explained above. Our approach is mainly based on this latter work.
For superlinear Hamiltonians satisfying $H(x,p)\geq a^{-1}|p|^m-a,$ $a,m>1,$
some H\"older or gradient estimates
were obtained in Capuzzo Dolcetta et al.~\cite{clp10}, Barles~\cite{barles10},
Cardaliaguet~\cite{cardaliaguet09}. Recently, oscillations and H\"older bounds for nonlinear 
degenerate parabolic equations were proved in Cardaliaguet and Sylvestre~\cite{cs12}
but the bounds depends on the $L^\infty$ norm of the solution.
\smallskip

The large time behavior of such kind of nonlinear equations or systems 
in the periodic setting were
extensively studied. For Hamilton-Jacobi equations (the totally degenerate
case when $A\equiv 0$), we refer the reader 
to~\cite{nr99, fathi98, bs00, ds06, bim13}
and the references therein. In this framework, the gradient bounds
are not a difficult step but the proof of the convergence is more delicate
since one does not have any strong maximum principle. Such kind of results were
extended to systems of Hamilton-Jacobi equations in~\cite{clln12, mt12, nguyen12, mt13}.
For second order nonlinear equations, the asymptotics results of~\cite{bs01} were recently
generalized in~\cite{ln13} to some superlinear degenerate equations which are totally degenerate 
on some subset $\Sigma$ of $\T^N$ and uniformly parabolic outside $\Sigma$
 using the gradient
bound of Theorem~\ref{uni_grad_dege} and some strong maximum principle type ideas. 
Similar results for uniformly convex degenerate
equations were established in Cagnetti et al.~\cite{cgmt13} 
using a different
approach based on a nonlinear adjoint method~\cite{evans10}. 
Their results also apply to systems with uniformly convex quadratic Hamiltonians
with quite general degeneracy assumptions since
the proof is not based on strong maximum principle-type
arguments. However, such a method does not seem to be applicable for fully
nonlinear equations and the system in~\cite{cgmt13} is not asymmetric.
\smallskip

The paper is organized as follows.
The bounds~\eqref{osc-intro}-\eqref{grad-intro} are established in
Section~\ref{sec:grad} 
for the scalar equation~\eqref{approx-scal}
with $H=\underline{H} + \overline{H},$
firstly when the equation is uniformly elliptic and secondly in some
degenerate cases.
Some examples of applications are collected in Section~\ref{sec:exple1}.
Section~\ref{assym-sys} is devoted to systems. The gradient bounds
for asymmetric systems are obtained in Section~\ref{sec:grad-sys}
and a new strong maximum principle is obtained in Section~\ref{sec:smp}.
Then the ergodic problem is solved and the main application
of large time behavior of asymmetric is investigated.
The section ends with some examples of applications and extensions.
Several results are collected in the appendix.
In particular, since the equations under consideration do
not satisfy the classical assumptions in viscosity solutions
(due to the possibly superlinear growth of the Hamiltonian
for instance), we recall several versions of the comparison
principle which apply in our case. Finally a control theoritical interpretation
is given.
\smallskip

\noindent
{\bf Acknowledgement.} We thank Guy Barles, Hiroyoshi Mitake and
Hung Tran for fruitful discussions.
This work was partially supported by the ANR (Agence Nationale de
la Recherche) through HJnet project ANR-12-BS01-0008-01
and WKBHJ project ANR-12-BS01-0020.


\section{Gradient bounds for nonlinear parabolic equations}
\label{sec:grad}

For $\e >0,$ we consider the approximated equation
\begin{eqnarray}\label{AE-H}
&& \e \phi^\e  -{\rm trace}(A(x) D^2\phi^\e)
+H(x, D\phi^\e )=0, \quad x\in\T^N.
\end{eqnarray}
We will always assume
\begin{eqnarray}\label{cond-sig}
&& A = \sigma\sigma^T, \  \sigma\in W^{1,\infty}(\T^N;\R^{N\times N}),
\quad H \in C(\T^N\times \R^N).
\end{eqnarray}

\subsection{A general result for the oscillation}

We first show that the oscillation of the solution of~\eqref{AE-H}
is uniformly bounded under a very general hypothesis.
This result is interesting by itself.
\begin{eqnarray}\label{ssa4}
&&  \left\{\begin{array}{l}
\text{There exists $L>1$ such that
for all $x,y\in\T^N,$}\\
\text{if $|p|\!= \!L$, then }
\displaystyle  H(x,p) \ge|p|\left[H(y,\frac{p}{|p|})\!+\!|H(\cdot ,0)|_\infty
\!+\! N^{3/2} |\sigma_x|_\infty^2\right].
\end{array}\right.
\end{eqnarray}

Notice that~\eqref{ssa4} is satisfied when~\eqref{Hsur-lin} holds,
see Section~\ref{sec:exple1}.

\begin{lem}\label{oscillation}
Assume~\eqref{cond-sig} and~\eqref{ssa4}. Let  $\phi^\e$ be a continuous
solution of~\eqref{AE-H} and let $\phi^\e(x_\e)=\min \phi^\e$. Then
\begin{eqnarray*}
\phi^\e (x)- \phi^\e(x_\e)\le L|x-x_\e| \quad \text{for all $x\in \T^N$,}
\end{eqnarray*}
where $L$ is the constant (independent of $\e$) which appears in~\eqref{ssa4}.
\end{lem}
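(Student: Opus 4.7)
Argue by contradiction. Suppose
\[
M := \sup_{x \in \T^N}\bigl(\phi^\e(x) - \phi^\e(x_\e) - L|x - x_\e|\bigr) > 0,
\]
and let $\bar x$ attain this supremum. Since the expression equals $0$ at $x_\e$, we must have $\bar x \ne x_\e$. Write $r := |\bar x - x_\e|$, $\hat p := (\bar x - x_\e)/r$, and $B := I - \hat p \hat p^T$. Near $\bar x$ the function $\psi(x) := L|x - x_\e|$ is smooth, touches $\phi^\e$ from above at $\bar x$, and satisfies $D\psi(\bar x) = L\hat p$ (so $|D\psi(\bar x)| = L$ is exactly the critical value of \eqref{ssa4}) and $D^2\psi(\bar x) = (L/r)B$. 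The viscosity subsolution inequality at $\bar x$ reads
\[
\e\phi^\e(\bar x) + H(\bar x, L\hat p) \le \tfrac{L}{r}\operatorname{tr}\!\bigl(A(\bar x) B\bigr).
\]

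For the left-hand side I would bring in two ingredients. The supersolution property at the minimum point $x_\e$, tested against the constant $\phi^\e(x_\e)$, yields $\e\phi^\e(x_\e) \ge -H(x_\e, 0) \ge -|H(\cdot,0)|_\infty$, and together with $\phi^\e(\bar x) \ge \phi^\e(x_\e)$ gives $\e\phi^\e(\bar x) \ge -|H(\cdot, 0)|_\infty$. Applying~\eqref{ssa4} with $x = y = \bar x$ and $p = L\hat p$ provides the lower bound $H(\bar x, L\hat p) \ge L\bigl[H(\bar x, \hat p) + |H(\cdot, 0)|_\infty + N^{3/2}|\sigma_x|_\infty^2\bigr]$. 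Substituting produces the key inequality
\[
(L - 1)|H(\cdot, 0)|_\infty + L H(\bar x, \hat p) + L N^{3/2}|\sigma_x|_\infty^2 \le \tfrac{L}{r}\operatorname{tr}\!\bigl(A(\bar x) B\bigr).
\]

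The delicate step, and the main obstacle, is to control the singular right-hand side: generically $\operatorname{tr}(A(\bar x) B) = |\sigma(\bar x)|_F^2 - |\sigma(\bar x)^T \hat p|^2$ is of order $1$, so the factor $L/r$ blows up as $\bar x$ approaches $x_\e$. I would overcome this via a doubling-of-variables refinement, replacing the cone test function by the two-variable penalisation $L|x - y| + |y - x_\e|^2/\alpha^2$ and applying the Crandall--Ishii--Lions theorem on sums. The matrix inequality furnished by that theorem, together with the Lipschitz regularity of $\sigma$ from~\eqref{cond-sig}, upgrades the trace control to the form
\[
\operatorname{tr}\!\bigl(A(\bar x) X\bigr) - \operatorname{tr}\!\bigl(A(\bar y) Y\bigr) \le L N^{3/2}|\sigma_x|_\infty^2 \, |\bar x - \bar y|,
\]
which is of order $|\bar x - \bar y|$ rather than $1/|\bar x - \bar y|$. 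Letting $\alpha \to 0$ forces $\bar y \to x_\e$, so that this $O(r)$ trace bound absorbs precisely the $L N^{3/2}|\sigma_x|_\infty^2$ contribution produced by~\eqref{ssa4} on the left-hand side; the surviving term $(L-1)|H(\cdot, 0)|_\infty$ (together with $L H(\bar x, \hat p)$, which is bounded by continuity of $H$ on the compact set $\T^N \times S^{N-1}$) then has no counterpart on the right, yielding the required contradiction. The constants $L > 1$ and $N^{3/2}|\sigma_x|_\infty^2$ appearing in~\eqref{ssa4} are tuned precisely so that this cancellation closes cleanly.
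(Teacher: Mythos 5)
Your proposal has a genuine gap: the way you invoke \eqref{ssa4} does not close the argument.

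You correctly diagnose the singular-Hessian obstruction in the naive cone comparison, and the doubled test function with the Lipschitz regularity of $\sigma$ does give a trace estimate of the right order. But you then apply \eqref{ssa4} with $x=y=\bar x$ and $p=L\hat p$, obtaining
\[
H(\bar x,L\hat p)\ \ge\ L\bigl[H(\bar x,\hat p)+|H(\cdot,0)|_\infty+N^{3/2}|\sigma_x|_\infty^2\bigr],
\]
and then try to conclude because $LH(\bar x,\hat p)$ is ``bounded by continuity.'' Bounded is not the same as nonnegative: $H(\bar x,\hat p)$ carries no sign information, so the left-hand side of your key inequality may be large and negative, and no contradiction follows. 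Moreover, nothing in your doubled viscosity inequalities produces a matching $LH(\cdot,\hat p)$ or $LH(\cdot,p/L)$ term that could absorb it. This is not a technical nuisance to be tuned away --- it is the precise reason a symmetric doubling $\phi(x)-\phi(y)$ cannot exploit \eqref{ssa4}, whose right-hand side contains the rescaled evaluation $|p|\,H(y,p/|p|)$.

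The paper's proof makes exactly this scaling do the work. It maximizes
\[
\phi(x)-L\phi(y)+(L-1)\min\phi-L|x-y|,
\]
so that the supersolution jet at $\bar y$ is $(p/L,Y/L)$ rather than $(p,Y)$. After multiplying the supersolution inequality by $L$ and subtracting, one gets exactly
\[
H(\bar x,p)-L\,H\!\left(\bar y,\frac{p}{L}\right)
\]
in the combined inequality, with $|p|=L$ --- and this is the quantity that \eqref{ssa4} (applied with $x=\bar x$, $y=\bar y$) bounds from below, cancelling the dangerous Hamiltonian term. The term $(L-1)\min\phi$ in the max is then what makes the $\e$-contribution controllable via $\e\min\phi\le|H(\cdot,0)|_\infty$. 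Note also that this asymmetric doubling dispenses with the quadratic penalization and the passage $\alpha\to 0$ entirely: the ``if $M\le 0$'' case gives the stated conclusion directly by specializing $y=x_\e$. To repair your proof you would need to replace your symmetric doubling by this $L$-weighted one; there is no way to salvage the $x=y=\bar x$ application of \eqref{ssa4}.
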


\noindent
An immediate consequence is
\begin{eqnarray*}
{\rm osc}(\phi^\e):=  \max \phi^\e-\min \phi^\e\leq \sqrt{N}L.
\end{eqnarray*}

\begin{proof}[Proof of Lemma \ref{oscillation}]
For simplicity, we skip the $\e$ superscript for $\phi^\e$ writing $\phi$
instead. The constant $L$ which appears below is the one of~\eqref{ssa4}. 
Consider
\begin{eqnarray*}
M=\max_{x,y \in \T^N}\{ \phi(x)-L \phi(y)+(L-1)\min\phi -L|x-y|\}.
\end{eqnarray*}
We are done if $M\le 0$.
Otherwise, the above positive maximum  is achieved at 
$(\overline{x},\overline{y})$ with $\overline{x}\not= \overline{y}$.
Notice that the continuity of $\phi$ is crucial at this step. 
The theory of second order viscosity solutions (see \cite{cil92} and Lemma~\ref{diff-tracet}) 
yields, for every $\varrho>0,$ the existence of 
$(p,X) \in \overline{J}^{2,+}\phi(\overline{x})$ and
$(p/L,Y/L) \in \overline{J}^{2,-}\phi(\overline{y})$, $p=L \frac{\overline{x}-\overline{y}}{|\overline{x}-\overline{y}|}$,
such that
\begin{eqnarray*}
&&-{\rm trace}(A(\overline{x})X
-A(\overline{y})Y)\geq 
- LN^{3/2} |\sigma_x|_\infty^2+O(\varrho)
\end{eqnarray*}
and 
\begin{eqnarray*}
&& \left\{
\begin{array}{ll}
\displaystyle \e \phi(\overline{x})-{\rm trace}(A(\overline{x}) X)+H(\overline{x},p)
\le 0,\\[2mm]
\displaystyle  \e \phi(\overline{y})-{\rm trace}(A(\overline{y})\frac{Y}{L})+H(\overline{y},\frac{p}{L})\} \ge 0.
\end{array}
\right.
\end{eqnarray*}
It follows
\begin{eqnarray*}
  \hspace*{0.2cm}  \e (\phi(\overline{x})-L\phi(\overline{y}))
-{\rm trace}(A(\overline{x}) X
-A(\overline{y})Y)
+H(\overline{x},p)
-L H(\overline{y},\frac{p}{L})
\leq 0.\nonumber
\end{eqnarray*}
We have 
\begin{eqnarray*}
\e (\phi(\overline{x})- L\phi(\overline{y}))>-(L-1)\e\min \phi
\ge -L|H(\cdot ,0)|_\infty
\end{eqnarray*}
since $\e\min \phi\leq |H(\cdot ,0)|_\infty$ by the maximum principle 
(see~\eqref{eps-phi-borne}).
Combining all the above information, we  get 
\begin{eqnarray*}
H(\overline{x},p)
-L \left[H(\overline{y},\frac{p}{L}) +|H(\cdot ,0)|_\infty+N^{3/2} |\sigma_x|_\infty^2\right]<0.
\end{eqnarray*}
Applying \eqref{ssa4} yields a contradiction.
\end{proof}

\subsection{Gradient bounds for uniformly elliptic equations}
\label{sec:bound-scalar}

In this section, we suppose that~\eqref{AE-H} is uniformly elliptic, i.e.,
\begin{eqnarray}\label{sig-deg}
&& \begin{array}{l}
\text{there exists } \nu >0 \text{ such that }
A(x)\geq \nu I, \quad
x\in\T^N.
\end{array}
\end{eqnarray}
In this setting, we consider Hamiltonians under the special
form $H=\underline{H} +\overline{H}$ 
where  $\underline{H}$ is a sublinear Hamiltonian and
$\overline{H}$ is of superlinear type in the sense defined
below. This form will be useful
later to deal with asymmetric systems.
We rewrite~\eqref{AE-H} as
\begin{eqnarray}\label{AE}
&& \e \phi^\e -{\rm trace}(A(x) D^2\phi^\e)
+\underline{H}(x, D\phi^\e ) +\overline{H}(x, D\phi^\e )=0.
\end{eqnarray}

We say that the $\underline{H}$ is a {\em sublinear Hamiltonian} if
\begin{eqnarray}\label{H3er}
|\underline{H}(x,p)| \le \underline{C}(1+|p|),\ (x,p)\in \T^N \times \R^N.
\end{eqnarray}

\noindent
We consider the following {\em superlinear}-type assumptions
for $\overline{H}.$ The first one is needed to obtain
an oscillation bound for the solution and the second one is slightly 
stronger to get the gradient bound.
\begin{eqnarray}\label{oscill-ell}
&&  \left\{\begin{array}{l}
\text{There exists $\overline{C}>0,$ $L ,\mu>1$ such that}\\
\text{for all $x,y\in\T^N,$ $|p| \ge  L$,  } 
\displaystyle  \ \overline{H}(x,p)-\mu\overline{H}(y,\frac{p}{\mu }) 
\ge -\overline{C}|p|.
\end{array}\right.
\end{eqnarray}

\begin{eqnarray}\label{BS-ell}
&&  \left\{\begin{array}{l}
\text{There exists $\overline{C}>0,$
$L>1$ such that}\\[2mm]
\text{for all $x,y\in \T^N$, $|p|\geq L,$
and $\mu\geq 1\!+\!L|x-y|,$ }\\
\displaystyle 
\overline{H}(x,p)-\mu\overline{H}(y,\frac{p}{\mu }) 
\ge -\overline{C}|p|.
\end{array}\right.
\end{eqnarray}
From the inequality in~\eqref{H3er}, we see that
$\underline{H}$ has a sublinear growth in the classical sense.
But, let us point out that the {\it superlinear} $\overline{H}$ 
may be zero in~\eqref{oscill-ell} and~\eqref{BS-ell}. This fact will allow
to treat some cases of asymmetric systems. We chose to keep the
terminology {\it superlinear} since~\eqref{BS-ell} is a consequence of
the superlinear-type assumption 
\begin{eqnarray}\label{BS-ell-origin}
&&  \left\{\begin{array}{l}
\overline{H}\in W_{\rm loc}^{1,\infty}(\T^N\times \R^N) 
\text{, and there exists $L>1$ such that}\\[2mm]
\text{for a.e. $x\in \T^N$, $|p|\geq L,$ }
\displaystyle 
L\left[\overline{H}_{p}p-\overline{H}
\right] -|\overline{H}_{x}|   \geq 0
\end{array}\right.
\end{eqnarray}
introduced in~\cite{bs01}. Moreover,~\eqref{BS-ell} is satisfied
for the typical superlinear Hamiltonian
$\overline{H}(x,p)=a(x)|p|^{1+\alpha}+\ell (x),$ $\alpha >0,$ $a>0$
we have in mind. We refer the reader to Section~\ref{sec:exple1}
for more discussions and examples showing that our assumptions are
quite general.

We state the main result of this section.

\begin{thm}\label{uni_grad}
Assume \eqref{cond-sig}, \eqref{sig-deg}, \eqref{H3er}
and \eqref{BS-ell}.
For all $\e >0,$ there exists a unique continuous viscosity solution 
$\phi^\e\in C(\T^N)$ of \eqref{AE} and a constant $K>0$ independent of $\e$ such that
\begin{eqnarray*}
|D\phi^\e|_{\infty} \le K.
\end{eqnarray*}
\end{thm}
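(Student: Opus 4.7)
The plan is to obtain existence, uniqueness and an $L^\infty$ bound essentially for free from standard viscosity machinery, and then to concentrate on the $\e$-independent Lipschitz estimate via a doubling variables argument tailored to the splitting $H=\underline{H}+\overline{H}$.

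Existence and uniqueness of a continuous bounded solution $\phi^\e$ of \eqref{AE} follow from Perron's method and the comparison principle recalled in the appendix, while the maximum principle gives $\e|\phi^\e|_\infty\leq|H(\cdot,0)|_\infty$. For the gradient bound, I would run a doubling argument with a test function of the form
$$
\Phi(x,y)=\phi^\e(x)-\mu\,\phi^\e(y)-K\psi(|x-y|),
$$
where $\mu\geq 1$ is chosen so that $\mu\geq 1+L|\bar x-\bar y|$ at the maximizer (so that \eqref{BS-ell} is applicable), $\psi$ is smooth and strictly concave near $0$ with $\psi(0)=0$, $\psi'(0+)=1$ and bounded on the torus, and $K$ is a constant to be chosen. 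If $\max\Phi\le 0$, then $\phi^\e(x)-\phi^\e(y)\le(\mu-1)\phi^\e(y)+K\psi(|x-y|)$, which combined with the oscillation estimate that the argument will itself produce delivers a Lipschitz bound independent of $\e$. Otherwise, at a maximizer $(\bar x,\bar y)$ with $\bar x\neq\bar y$, Ishii's lemma supplies matrices in the appropriate jets with gradient variable $p$ of order $K$. Subtracting the two viscosity inequalities, the uniform ellipticity \eqref{sig-deg} combined with the strict concavity of $\psi$ yields a trace contribution of order $-\nu K\psi''(|\bar x-\bar y|)$, which is large and positive; the sublinear assumption \eqref{H3er} contributes at most $O(K)$; and \eqref{BS-ell} gives $\overline{H}(\bar x,p)-\mu\overline{H}(\bar y,p/\mu)\geq-\overline{C}K$. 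Choosing $\psi$ with $|\psi''|$ large enough, the trace term dominates, producing the desired contradiction and hence the Lipschitz bound with constant proportional to $K$.

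The main obstacle is the coupling of $\mu$ with the unknown distance $|\bar x-\bar y|$: \eqref{BS-ell} requires $\mu\geq 1+L|\bar x-\bar y|$, so $\mu$ cannot simply be set to $1$ a priori, whereas if $\mu$ is kept strictly larger than $1$ then $\max\Phi\le 0$ alone does not quite give Lipschitz regularity. One clean way to proceed is to make $\mu$ a function of $|x-y|$ in the test function itself and carry the extra derivatives through Ishii's lemma; another is to first localize by an oscillation estimate (which forces $|\bar x-\bar y|$ to be small and hence $\mu$ close to $1$) and then run the Lipschitz argument on short scales, using the oscillation control to handle $|x-y|$ of order one. Either way, tracking that all constants remain $\e$-independent and reconciling the multiplicative scaling inherent in \eqref{BS-ell} with the additive structure of Ishii's lemma is the delicate part of the argument.
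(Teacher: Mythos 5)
Your two-lemma skeleton (first an oscillation bound, then an amplification to a Lipschitz bound) is the right one, and you correctly identify the central tension: \eqref{BS-ell} wants $\mu\ge 1+L|\bar x-\bar y|$ at the maximizer, while the doubling variable argument needs $\mu=1$ (or $\mu\to 1$) to deliver Lipschitz rather than merely oscillation information. But your resolution of that tension is a gap. The device the paper uses is an exponential change of unknown, $e^{w^\e}=\phi^\e-\min_{\T^N}\phi^\e+1$, and the doubling argument is then run on $w$ with the test function $w(x)-w(y)-\Psi(|x-y|)$ where $\Psi$ is concave increasing (no extra parameter $\mu$ in the test function). The multiplicative factor $\mu$ is not inserted by hand; it appears \emph{intrinsically} as $\mu:=e^{w(\bar x)-w(\bar y)}$ when you rewrite $\overline G(x,w,p)=e^{-w}\overline H(x,e^w p)$, and positivity of the maximum automatically gives $\mu>1+\Psi(|\bar x-\bar y|)\ge 1+L|\bar x-\bar y|$, so \eqref{BS-ell} applies and the test function simultaneously yields a genuine Lipschitz bound for $w$, hence for $\phi^\e$. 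This is the weak Bernstein-type step you never invoke, and without it neither of your two suggested fixes closes the argument: keeping $\mu>1$ in $\phi(x)-\mu\phi(y)-K\psi(|x-y|)$ only bounds the oscillation, and making $\mu$ a function of $|x-y|$ destroys the clean structure needed for Ishii's lemma. The price of the exponential change is a quadratic term $-|\sigma(x)^T Dw|^2$, and \emph{this} is where the oscillation bound from Lemma~\ref{weakuniform2} is actually used: choosing $r$ with $\Psi(r)=\mathrm{osc}(\phi)$ forces $|\bar x-\bar y|<r$, which converts the dangerous $|\bar x-\bar y|\Psi'(|\bar x-\bar y|)^2$ into $\mathrm{osc}(\phi)\,\Psi'$, a term that the trace term $-4\nu\Psi''$ can absorb through the choice of $A_2$. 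Your proposal invokes the oscillation bound only heuristically ("which combined with the oscillation estimate that the argument will itself produce delivers a Lipschitz bound") rather than via this precise localization mechanism.

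A second, smaller, but real gap is your treatment of existence and uniqueness. You write that these "follow from Perron's method and the comparison principle recalled in the appendix," but the appendix comparison result (Theorem~\ref{comp-avec-lip}) only compares a discontinuous sub- or supersolution against a \emph{Lipschitz} solution; a strong comparison principle between merely semicontinuous functions genuinely fails here because of the superlinear growth of $H$, so Perron's method does not directly produce a continuous solution of \eqref{AE}. The paper circumvents this by truncating $H$ at level $n$ (so that a full strong comparison holds and Perron gives a continuous $\phi_n$), running the two lemmas on $\phi_n$ with constants uniform in $n$ and $\e$, and then passing to the limit $n\to\infty$ by Ascoli--Arzel\`a and stability. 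Uniqueness then follows from Theorem~\ref{comp-avec-lip} precisely because the constructed solution is Lipschitz.
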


The proof relies on two important lemmas. The first one establishes a
uniform bound for the oscillation and the second one improves this
bound into a gradient bound. We first state and prove the lemmas
and then give the proof of the theorem.

\begin{lem}\label{weakuniform2}
Under the hypotheses of Theorem~\ref{uni_grad}, where~\eqref{BS-ell} could be replaced by the weaker 
condition~\eqref{oscill-ell}, there exists a constant $K>0$ (independent of $\e$) 
such that, if $\phi^\e$ is a continuous solution of~\eqref{AE}, then 
\begin{eqnarray*}
{\rm osc}(\phi^\e):=  \max \phi^\e-\min \phi^\e\leq K.
\end{eqnarray*}
\end{lem}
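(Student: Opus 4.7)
The plan is to adapt the doubling-of-variables argument from the proof of Lemma~\ref{oscillation}, with two key modifications: the multiplier $L$ there is replaced by the (possibly much smaller) constant $\mu>1$ from \eqref{oscill-ell}, and the linear penalty is replaced by $K\omega(|x-y|)$ with $\omega$ strictly concave, so that the uniform ellipticity \eqref{sig-deg} can be activated. Fix $\mu>1$ and $L>1$ from \eqref{oscill-ell} and a $C^2$ concave function $\omega\colon[0,\sqrt N\,]\to[0,\infty)$ with $\omega(0)=0$, $\omega'>0$, $\omega''<0$ (for instance $\omega(r)=r^\alpha$ with $\alpha\in(0,1)$ to be tuned to $\nu$, $N$ and $|\sigma|_\infty$). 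For $K>0$ large, to be chosen independently of $\e$, consider
$$M=\max_{x,y\in\T^N}\bigl\{\phi^\e(x)-\mu\phi^\e(y)+(\mu-1)\min_{\T^N}\phi^\e-K\omega(|x-y|)\bigr\}.$$
If $M\le 0$, evaluating at $x$ and $y$ realizing the maximum and minimum of $\phi^\e$ immediately gives $\mathrm{osc}(\phi^\e)\le K\omega(\sqrt N)$, which is the desired bound. It therefore suffices to derive a contradiction from the assumption $M>0$ for $K$ sufficiently large.

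Assuming $M>0$, the value at $x=y$ equals $(1-\mu)\phi^\e(x)+(\mu-1)\min\phi^\e\le 0$, hence the positive maximum is attained at some $(\bar x,\bar y)$ with $\bar x\ne\bar y$. Applying Ishii's lemma (Lemma~\ref{diff-tracet}) exactly as in the proof of Lemma~\ref{oscillation}, one obtains, for every $\varrho>0$, matrices $X,Y$ with $(p,X)\in\overline{J}^{2,+}\phi^\e(\bar x)$ and $(p/\mu,Y/\mu)\in\overline{J}^{2,-}\phi^\e(\bar y)$, where $p=K\omega'(r)\,\xi$, $r=|\bar x-\bar y|$, $\xi=(\bar x-\bar y)/r$. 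For $K$ large, $|p|=K\omega'(r)\ge L$, so \eqref{oscill-ell} applies. Subtracting $\mu$ times the supersolution inequality at $\bar y$ from the subsolution inequality at $\bar x$ then yields
$$\e\bigl[\phi^\e(\bar x)-\mu\phi^\e(\bar y)\bigr]-\mathrm{tr}\bigl(A(\bar x)X-A(\bar y)Y\bigr)+H(\bar x,p)-\mu H(\bar y,p/\mu)\le 0.$$

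Four ingredients finish the argument. First, the weak maximum principle gives $\e[\phi^\e(\bar x)-\mu\phi^\e(\bar y)]\ge-(\mu-1)|H(\cdot,0)|_\infty$. Second, \eqref{oscill-ell} yields $\overline H(\bar x,p)-\mu\overline H(\bar y,p/\mu)\ge-\overline C|p|$. Third, \eqref{H3er} gives $\underline H(\bar x,p)-\mu\underline H(\bar y,p/\mu)\ge-2\underline C|p|-\underline C(1+\mu)$. The decisive fourth ingredient is an Ishii--Lions type trace estimate combining Ishii's matrix inequality with $A\ge\nu I$ and $\omega''<0$: testing against appropriate combinations of $\sigma(\bar x)$ and $\sigma(\bar y)$ produces
$$\mathrm{tr}\bigl(A(\bar x)X-A(\bar y)Y\bigr)\le K\!\left[2\nu\,\omega''(r)+\frac{C_\sigma}{r}\,\omega'(r)\right]+O(\varrho),$$
with $C_\sigma>0$ depending only on $N$ and $|\sigma|_\infty$, and with the crucial negative ellipticity contribution $2\nu K\omega''(r)<0$. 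Combining the four estimates and rearranging reduces the problem to
$$2\nu K\,(-\omega''(r))\le K\omega'(r)\!\left[\frac{C_\sigma}{r}+\overline C+2\underline C\right]+C_\ast+O(\varrho),$$
where $C_\ast$ is independent of $K$ and $\e$. For a concave $\omega$ chosen so that $-\omega''(r)/\omega'(r)$ dominates $C_\sigma/(2\nu r)+(\overline C+2\underline C)/(2\nu)$ uniformly in $r\in(0,\sqrt N\,]$, this inequality is violated for $K$ large and $\varrho$ small, yielding the desired contradiction.

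The main technical obstacle lies in step four and in matching the constants: the ellipticity-generated upper bound $2\nu K\omega''(r)$ must beat the first-order term $C_\sigma K\omega'(r)/r$ uniformly in the unknown distance $r$, which forces a careful choice of the concave $\omega$ (in particular strong enough concavity near the origin) and an implicit quantitative relation among $\nu$, $N|\sigma|_\infty^2$ and the Hamiltonian constants $\underline C$, $\overline C$ and $\mu$. Calibrating $\omega$ to secure this domination uniformly in $r$ is exactly where the uniform ellipticity \eqref{sig-deg} is used, and it is what makes the argument go through under the weaker superlinear assumption \eqref{oscill-ell}.
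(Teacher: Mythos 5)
Your approach is the same in spirit as the paper's: doubling with multiplier $\mu$, penalizing with a concave function, and using uniform ellipticity to generate the good $-\omega''$ term. However, there are two interlocking issues with your Step~4 that leave a genuine gap, and the paper sidesteps both by choosing the test function more cleverly.

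First, the trace estimate you assert, $\mathrm{tr}(A(\bar x)X-A(\bar y)Y)\le K[2\nu\,\omega''(r)+\tfrac{C_\sigma}{r}\omega'(r)]+O(\varrho)$, carries a spurious $1/r$ factor. In the paper's Lemma~\ref{diff-tracet}, a careful choice of the two orthonormal bases (aligned with $\sigma(\bar x)^{-1}q$ and $\sigma(\bar y)^{-1}q$ in the first direction, $e_i=\tilde e_i$ in directions $3,\dots,N$, plus a controlled second direction) kills the $1/r$ coming from $|B|\le 1/r$: the factors $|\zeta_i|\le C|\bar x-\bar y|$ cancel the $1/r$ exactly, yielding $-\mathrm{tr}(A(\bar x)X-A(\bar y)Y)\ge -4\nu\Psi''-\tilde C\,\Psi'+O(\varrho)$ with $\tilde C$ a pure constant. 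Second, and more fundamentally, the power weight $\omega(r)=r^\alpha$ has $-\omega''/\omega'=(1-\alpha)/r\le 1/r$, which is \emph{bounded} for $r$ near $\mathrm{diam}(\T^N)=\sqrt N$: you cannot make this ratio large at $r=\sqrt N$ by tuning $\alpha$. Consequently, even if you fix the trace bound, the final inequality $2\nu(-\omega''(r))>\omega'(r)(\tilde C+\overline C+2\underline C)$ for all $r\in(0,\sqrt N]$ forces $\nu\gtrsim\sqrt N(\tilde C+\overline C+2\underline C)$, a restriction the lemma does not allow. The paper's choice $\Psi(s)=\tfrac{A_1}{A_2}(1-e^{-A_2 s})$ satisfies $-\Psi''/\Psi'\equiv A_2$, a constant that can be taken arbitrarily large independently of $s$, so after inserting the correct trace estimate the contradiction is obtained by simply choosing $A_2$ proportional to $(\tilde C+2\underline C+\overline C+\cdots)/\nu$, with no smallness or largeness constraint whatsoever on $\nu$. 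Replace your $K\omega$ with that exponential $\Psi$ and use the trace bound exactly as stated in Lemma~\ref{diff-tracet}, and the remainder of your argument (the $\e\min\phi$ bound, the sublinear and superlinear estimates) closes cleanly.
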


\begin{proof}[Proof of Lemma \ref{weakuniform2}] 
For simplicity, we skip the $\e$ superscript for $\phi^\e$ writing $\phi$
instead.

\noindent{\it 1.  Construction a concave test function.}
Consider the function
\begin{eqnarray}\label{def-phi7}
&&\Psi(s)=\frac{A_1}{A_2}(1-{\rm e}^{-A_2 s})
\quad \text{for $0\leq s\leq \sqrt{N}={\rm diameter}(\T^N),$}
\end{eqnarray}
where $A_1,A_2>0$ will be chosen later.
It is straightforward to see that $\Psi$ is a $C^\infty$ concave increasing
function satisfying $\Psi(0)=0$ and, for all $s\in [0,\sqrt{N}],$
\begin{eqnarray}\label{equa-diff}
\Psi''+A_2\Psi'=0,\quad A_1 e^{-A_2 \sqrt{N}} = \Psi'(\sqrt{N})\leq \Psi'(s)\leq \Psi'(0)=A_1 \label{borne-derivee}.
\end{eqnarray}

\noindent{\it 2. Viscosity inequalities.}
Consider
\begin{eqnarray}\label{maxstar}
M_\mu=\max_{x,y \in \T^N}\{ \phi(x)+(\mu-1)\min\phi-\mu \phi(y)-\Psi(|x-y|)\},
\end{eqnarray}
with $\mu$ given in~\eqref{oscill-ell}.
If $M_\mu\le 0$ then the lemma holds with $K=A_1/A_2.$
From now on, we argue by contradiction
assuming that the maximum is positive and achieved at 
$(\overline{x},\overline{y})$
with $\overline{x}\not= \overline{y}$. 
 
The theory of second order viscosity solutions yields, for every
$\varrho>0,$ the existence of 
$(p,X) \in \overline{J}^{2,+}\phi(\overline{x}),(p/\mu,Y/\mu) \in \overline{J}^{2,-}\phi(\overline{y})$
such that
\begin{eqnarray}\label{mat}
\left(
\begin{array}{ccc}
X & 0 \\
0 & -Y
\end{array}
\right)
\le A+\varrho A^2,
\end{eqnarray}
with
\begin{eqnarray}
\label{mat-bis}
q=\frac{\overline{x}-\overline{y}}{|\overline{x}-\overline{y}|},
\quad p=\Psi'(|\overline{x}-\overline{y}|) q,
\quad B=\frac{1}{|\overline{x}-\overline{y}|} (I-q \otimes q),\\
\label{mat-ter}
A=\Psi'(|\overline{x}-\overline{y}|)
\left(
\begin{array}{ccc}
B & -B \\
-B & B
\end{array}
\right)
+\Psi''(|\overline{x}-\overline{y}|)
\left(
\begin{array}{ccc}
q \otimes q & -q \otimes q \\
-q \otimes q & q \otimes q
\end{array}
\right)
\end{eqnarray}
and the following viscosity inequalities hold for the solution $\phi$ of~\eqref{AE},
\begin{eqnarray*}
&& \left\{
\begin{array}{ll}
\displaystyle \e \phi(\overline{x})
-{\rm trace}(A(\overline{x}) X)+\underline{H}(\overline{x},p)+\overline{H}(\overline{x},p)
\le 0,\\[2mm]
\displaystyle \e \phi(\overline{y})-{\rm trace}(A(\overline{y})\frac{Y}{\mu})+\underline{H}(\overline{y},\frac{p}{\mu})+\overline{H}(\overline{y},\frac{p}{\mu}) \ge 0.
\end{array}
\right.
\end{eqnarray*}
It follows
\begin{eqnarray}
\label{visco-ineq185}
&& \e \phi(\overline{x})-\e \mu\phi(\overline{y})
-{\rm trace}(A(\overline{x}) X
-A(\overline{y})Y)\\\nonumber
&&\hspace*{4.3cm}+\underline{H}(\overline{x},p)
-\mu\underline{H}(\overline{y},\frac{p}{\mu})+\overline{H}(\overline{x},p)
-\mu\overline{H}(\overline{y},\frac{p}{\mu})
\leq 0.\nonumber
\end{eqnarray}

\noindent{\it 3. Trace estimates.}
We have the following estimates which will be useful in the sequel, 
see for instance~\cite{il90, bs01, barles08}. A proof is given in the
Appendix. 
\begin{lem}\label{diff-tracet}
Under assumption~\eqref{cond-sig},
\begin{eqnarray*}
&&-{\rm trace}(A(\overline{x})X
-A(\overline{y})Y)\geq 
- N |\sigma_x|_\infty^2 |\overline{x}-\overline{y}|\Psi'(|\overline{x}-\overline{y}|)+O(\varrho).
\end{eqnarray*}
If, in addition, \eqref{sig-deg} holds, then
\begin{eqnarray}\label{ineq-tracet}
-{\rm trace}(A(\overline{x})X
-A(\overline{y})Y)
\geq
-4\nu\Psi''(|\overline{x}-\overline{y}|)-\tilde{C}\Psi'(|\overline{x}-\overline{y}|)
+O(\varrho),
\end{eqnarray}
where $\tilde{C}=\tilde{C}(N,\nu,|\sigma|_\infty, |\sigma_x|_\infty)$
is given by~\eqref{def-ctilde}.
\end{lem}

\noindent{\it 4. End of the proof.} 
At $\hat{x}$ such that $\phi(\hat{x})={\rm max}\,\phi,$
we get
\begin{eqnarray*}
&& \e\min \phi + \underline{H}(\hat{x},0)+\overline{H}(\hat{x},0)
\leq \e\max \phi + 
\underline{H}(\hat{x},0)+\overline{H}(\hat{x},0)
\leq 0.
\end{eqnarray*}
It follows
\begin{eqnarray}\label{eps-min-phi}
\e\min \phi\leq |(\underline{H}+\overline{H}) 
(\cdot ,0) |_\infty =:\mathcal{H}_0.
\end{eqnarray}
So, using that the maximum is positive in~\eqref{maxstar}, we obtain
\begin{eqnarray*}
\e (\phi(\overline{x})- \mu\phi(\overline{y}))>-(\mu-1)\e\min \phi
\ge -(\mu -1)\mathcal{H}_0.
\end{eqnarray*}
We have
\begin{eqnarray*}
|\underline{H}(\overline{x},p)
-\mu\underline{H}(\overline{y},\frac{p}{\mu})|
\leq 2\underline{C}(1+|p|)= 2\underline{C}(1+\Psi'(|\overline{x}-\overline{y}|)),
\end{eqnarray*}
from~\eqref{H3er} and
\begin{eqnarray*}
&& -{\rm tr}(A(\overline{x})X
-A(\overline{y})Y) \geq -4\nu \Psi''(|\overline{x}-\overline{y}|)
-\tilde{C} \Psi'(|\overline{x}-\overline{y}|) + O(\varrho)
\end{eqnarray*}
from Lemma~\ref{diff-tracet} \eqref{ineq-tracet}.
Choosing 
\begin{eqnarray*}
A_1 = L e^{A_2\sqrt{N}},
\end{eqnarray*}
where $L$ is the constant of~\eqref{oscill-ell},
we obtain 
$|p|=\Psi'(|\overline{x}-\overline{y}|)\geq L$ from~\eqref{borne-derivee}, \eqref{mat-bis} 
and
\begin{eqnarray*}
 \overline{H}(\overline{x},p)-\mu\overline{H}(\overline{y},\frac{p}{\mu }) 
\ge -\overline{C}\Psi'(|\overline{x}-\overline{y}|)
\end{eqnarray*}
from~\eqref{oscill-ell}. 
Using these estimates in~\eqref{visco-ineq185}  and sending $\varrho$ to 0,
we have
\begin{eqnarray}\label{form-fin-123}
-4\nu  \Psi'' -(\tilde{C}+2\underline{C}+\overline{C}) \Psi'-(\mu -1)\mathcal{H}_0 -2\underline{C} \leq 0.
\end{eqnarray}
Recalling that  $\Psi''+A_2 \Psi'=0$ by~\eqref{borne-derivee}, we
obtain a contradiction with~\eqref{form-fin-123} with the choice
$$
A_2=\frac{1}{4\nu}\left( \tilde{C}+2\underline{C}+\overline{C}
+\frac{(\mu-1)\mathcal{H}_0 +2\underline{C}+1}{L}\right).
$$
It ends the proof.
\end{proof}

\begin{lem}\label{uniform2}
Under the hypotheses of Theorem~\ref{uni_grad}.
Let $\phi^\e$ be a continuous viscosity solution of \eqref{AE}
and define
\begin{eqnarray*}
\exp(w^\e)=\phi^\e-\min_{\T^N} \phi^\e+1.
\end{eqnarray*}
Then, there exists a constant $K$ (independent of $\e$)
such that $|Dw^\e|_{\infty}\le K.$
\end{lem}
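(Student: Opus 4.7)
I would follow the overall strategy of the proof of Lemma~\ref{weakuniform2}, but applied to the exponentially transformed unknown $w^\e$: this is precisely what is needed to bring \eqref{BS-ell} into play. The key new input is that Lemma~\ref{weakuniform2} itself yields the a priori bound $0\le w^\e\le \tilde K:=\log(1+K_0)$, with $K_0$ independent of $\e$; this bound will be decisive to \emph{localize} the doubling argument and tame the ``bad'' quadratic term generated by the exponential change.

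\textit{Transformed equation and doubling.} Since $\phi^\e-\min\phi^\e+1\ge 1$, the change of variable is smooth, and a direct computation using $D\phi^\e=e^{w^\e}Dw^\e$, $D^2\phi^\e=e^{w^\e}(D^2w^\e+Dw^\e\otimes Dw^\e)$ shows that $w^\e$ is a continuous viscosity solution of
\begin{equation*}
-\text{trace}(A(x)D^2 w^\e)-|\sigma^T(x)Dw^\e|^2+e^{-w^\e}(\underline H+\overline H)(x, e^{w^\e}Dw^\e)=B^\e(x),
\end{equation*}
where $B^\e:=-\e(1+(\min\phi^\e-1)e^{-w^\e})$ is bounded in $L^\infty$ uniformly in $\e$ thanks to \eqref{eps-min-phi} and $e^{-w^\e}\in[e^{-\tilde K},1]$. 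Set $A_1:=Le^{A_2\sqrt N}$ (with $L$ as in \eqref{BS-ell} and $A_2>0$ to be chosen below), let $\Psi$ be as in \eqref{def-phi7}, and consider
\begin{equation*}
M_\e:=\max_{x,y\in\T^N}\{w^\e(x)-w^\e(y)-\Psi(|x-y|)\}.
\end{equation*}
If $M_\e\le 0$ we conclude $|Dw^\e|_\infty\le\Psi'(0)=A_1$. Otherwise $M_\e$ is attained at $(\overline{x},\overline{y})$ with $\overline{x}\ne\overline{y}$, and standard second-order viscosity theory yields, for every $\varrho>0$, $(p,X)\in\overline{J}^{2,+}w^\e(\overline{x})$, $(p,Y)\in\overline{J}^{2,-}w^\e(\overline{y})$ with $p=\Psi'(|\overline{x}-\overline{y}|)q$ and the usual matrix bound.

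\textit{Triggering \eqref{BS-ell}.} Set $\tilde p:=e^{w^\e(\overline{x})}p$ and $\mu:=e^{w^\e(\overline{x})-w^\e(\overline{y})}$. Positivity of $M_\e$ gives $\mu>e^{\Psi(|\overline{x}-\overline{y}|)}$; our choice of $A_1$ ensures $\Psi'(s)\ge L$ on $[0,\sqrt N]$ by \eqref{borne-derivee}, hence $\Psi(s)\ge Ls$ and $\mu\ge 1+L|\overline{x}-\overline{y}|$, while $|\tilde p|\ge\Psi'(|\overline{x}-\overline{y}|)\ge L$. Both hypotheses of \eqref{BS-ell} are satisfied, so $\overline H(\overline{x},\tilde p)-\mu\overline H(\overline{y},\tilde p/\mu)\ge -\overline C|\tilde p|$, and the identity $e^{-w^\e(\overline{y})}=\mu e^{-w^\e(\overline{x})}$ converts this into the lower bound $e^{-w^\e(\overline{x})}\overline H(\overline{x},\tilde p)-e^{-w^\e(\overline{y})}\overline H(\overline{y},\tilde p/\mu)\ge -\overline C\Psi'(|\overline{x}-\overline{y}|)$ for the $\overline H$ contribution. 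The analogous difference for $\underline H$ is bounded below by $-2\underline C(1+\Psi')$ via \eqref{H3er}, and the trace difference by $4\nu A_2\Psi'-\tilde C\Psi'+O(\varrho)$ via Lemma~\ref{diff-tracet} and \eqref{equa-diff}. Subtracting the viscosity inequalities at $\overline{x}$ and $\overline{y}$ and combining these estimates leads to
\begin{equation*}
(4\nu A_2-\tilde C-2\underline C-\overline C)\Psi'(|\overline{x}-\overline{y}|)-C_\sigma|\overline{x}-\overline{y}|\,\Psi'(|\overline{x}-\overline{y}|)^2\le C_1,
\end{equation*}
with $C_\sigma:=2|\sigma|_\infty|\sigma_x|_\infty$ controlling the Lipschitz variation of $A=\sigma\sigma^T$ and $C_1$ uniform in $\e$ (after $\varrho\to 0$).

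\textit{Main obstacle and localization.} The term $-C_\sigma|\overline{x}-\overline{y}|\Psi'^2$ comes from the difference $-|\sigma^T(\overline{x})p|^2+|\sigma^T(\overline{y})p|^2$ produced by the exponential change, and is the ``dramatic'' term flagged in the introduction: its sign is wrong and a factor $\Psi'^2$ cannot be absorbed by the linear-in-$\Psi'$ good term. The oscillation bound saves the day: from $M_\e>0$ and $0\le w^\e\le\tilde K$ one has $\Psi(|\overline{x}-\overline{y}|)<\tilde K$; inverting \eqref{def-phi7}, and using that $A_1=Le^{A_2\sqrt N}\ge 2\tilde K A_2$ for any $A_2$ not too small, one gets $|\overline{x}-\overline{y}|\le 2\tilde K/A_1$, whence $|\overline{x}-\overline{y}|\Psi'(|\overline{x}-\overline{y}|)^2\le 2\tilde K\,\Psi'(|\overline{x}-\overline{y}|)$. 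Plugging this in yields $(4\nu A_2-\tilde C-2\underline C-\overline C-2C_\sigma\tilde K)\Psi'\le C_1$. Choosing $A_2$ large enough for the coefficient of $\Psi'$ to exceed $C_1/L$, and recalling $\Psi'\ge L$, produces a contradiction. Hence $M_\e\le 0$ and $|Dw^\e|_\infty\le A_1$, a constant independent of $\e$.
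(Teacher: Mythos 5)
Your proof is correct and follows essentially the same route as the paper's: transform via $e^{w^\e}=\phi^\e-\min\phi^\e+1$, double variables with the same exponential test function $\Psi$, exploit the uniform oscillation bound from Lemma~\ref{weakuniform2} to localize $|\overline{x}-\overline{y}|$ so that the quadratic term $|\overline{x}-\overline{y}|\Psi'^2$ can be absorbed into $C\Psi'$, trigger \eqref{BS-ell} with $\tilde p=e^{w^\e(\overline x)}p$, $\mu=e^{w^\e(\overline x)-w^\e(\overline y)}$, and close by taking $A_2$ large. The only cosmetic differences are in the choice of constants (the paper takes $A_1=(L+\mathrm{osc}(\phi))e^{A_2\sqrt N}$ and localizes by introducing the radius $r$ with $\Psi(r)=\mathrm{osc}(\phi)$, bounding $|\overline{x}-\overline{y}|\Psi'^2\le\Psi(r)\Psi'$ by concavity, while you localize directly from the $L^\infty$ bound $\tilde K$ on $w^\e$; also ``$A_2$ not too small'' should read ``$A_2$ large enough'').
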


\begin{proof}[Proof of Lemma \ref{uniform2}]
For simplicity, we skip the $\e$ superscript.

\noindent {\it 1. New equation for $w$.}
The function $w$ solves
\begin{eqnarray}\label{new-eq}
&& \e \,e^{-w(x)}(\min \phi -1)+\e 
-{\rm tr}(A D^2w)
+\underline{G}(x,w,Dw)+ \overline{G}(x,w,Dw)=0,
\end{eqnarray}
where
\begin{eqnarray}\label{new-hamilt}
\underline{G}(x,w,p)=e^{-w}\underline{H}(x,e^w p )-|\sigma(x)^Tp|^2,
\quad \overline{G}(x,w,p)=e^{-w}\overline{H}(x,e^w p ).
\end{eqnarray}

\noindent {\it 2. Definition of the test function.}
We define $\Psi(s)=\frac{A_1}{A_2}(1-{\rm e}^{-A_2 s})$ 
as in~\eqref{def-phi7} for $0\leq s \leq \sqrt{N}={\rm diameter}(\T^N)$
and set for further purpose
\begin{eqnarray}\label{choix-cstes1}
&& A_2=\frac{1}{4\nu}\left(\tilde{C}+4\underline{C}+\overline{C}
+\mathcal{H}_0+2|\sigma|_\infty|\sigma_x|_\infty {\rm osc}(\phi)\right)
\quad \text{and} \quad
A_1 =(L+{\rm osc}(\phi))
e^{A_2\sqrt{N}},
\end{eqnarray}
where $\nu, \underline{C}, \overline{C}, L$ are the constants appearing 
in the assumptions~\eqref{sig-deg},~\eqref{H3er},~\eqref{BS-ell}, $\mathcal{H}_0$ 
is defined in~\eqref{eps-min-phi}
and ${\rm osc}(\phi)$ is bounded independently of $\e$ by Lemma~\ref{weakuniform2}.

Since $\Psi(0)=0$ and
\begin{eqnarray*}
\Psi(\sqrt{N})= (L+{\rm osc}(\phi))\frac{e^{A_2\sqrt{N}}-1}{A_2}
\geq L+{\rm osc}(\phi)
> {\rm osc}(\phi),
\end{eqnarray*}
there exists $r\in [0,\sqrt{N}]$ such that
\begin{eqnarray}\label{choix-r}
&&\Psi(r)= {\rm osc}(\phi).
\end{eqnarray}

We then consider 
\begin{eqnarray}\label{max034}
\max_{x,y \in\T^N}\{ w(x)-w(y)-\Psi(|x-y|)\}.
\end{eqnarray}

If this maximum is nonpositive, then for all $x,y \in\T^N$, 
we have
\begin{eqnarray*}
w(x)-w(y)\le \Psi(|x-y|) \le A_1|x-y|,
\end{eqnarray*}
where the latter inequality follows from the concavity of $\Psi$. 
This yields the desired result. 

From now on, we argue by contradiction, assuming that
the maximum in~\eqref{max034} is positive.
This implies that it is achieved at 
$(\overline{x},\overline{y})\in\T^N\times\T^N$
with $\overline{x}\not= \overline{y}$ since 
$w$ is continuous.
Noticing that
\begin{eqnarray}\label{ineg135}
w(\overline{x})-w(\overline{y})
\leq e^{w(\overline{x})-w(\overline{y})}-1
\leq  e^{w(\overline{x})}-1
\leq \phi (\overline{x})- \mathop{\rm min}\,\phi 
\leq {\rm osc}(\phi),
\end{eqnarray}
we get
\begin{eqnarray*}
0 <w(\overline{x})-w(\overline{y})-\Psi (|\overline{x}-\overline{y}|)
\leq {\rm osc}(\phi)-\Psi (|\overline{x}-\overline{y}|).
\end{eqnarray*}
Using that $\Psi$ is increasing and~\eqref{choix-r},
we infer
\begin{eqnarray}\label{xmoinsy}
|\overline{x}-\overline{y}|<r.
\end{eqnarray}
This latter inequality is a kind of localization of points of maxima
in~\eqref{max034}. It will allow us to control the quadratic term coming
from~\eqref{new-hamilt} in term of the oscillation, see~\eqref{controleparosc}.

\noindent {\it 3. Viscosity inequalities for~\eqref{new-eq}.}
Writing the viscosity inequalities as in Step 2 of the proof
of Lemma~\ref{weakuniform2}, we obtain
\begin{eqnarray*}
&& \e e^{-w(\overline{x})}(\min \phi -1)+\e
 -{\rm trace}(A(\overline{x}) X)
+\underline{G}(\overline{x},w(\overline{x}),p)
+ \overline{G}(\overline{x},w(\overline{x}),p)\leq 0,\\
&& \e e^{-w(\overline{y})}(\min \phi -1)+\e
 -{\rm trace}(A(\overline{y}) Y)
+\underline{G}(\overline{y},w(\overline{y}),p)
+ \overline{G}(\overline{y},w(\overline{y}),p)\geq 0.
\end{eqnarray*}
Therefore,
\begin{eqnarray}\label{ineq3}
&&   \e (e^{-w(\overline{x})}-e^{-w(\overline{y})})(\min \phi -1)
-{\rm trace}(A(\overline{x}) X
-A(\overline{y})Y)\\\nonumber
&& \hspace*{1cm}+\underline{G}(\overline{x},w(\overline{x}),p)-\underline{G}(\overline{y},w(\overline{y}),p)
+\overline{G}(\overline{x},w(\overline{x}),p)- \overline{G}(\overline{y},w(\overline{y}),p)
\leq 0.
\end{eqnarray}
The end of the proof consists in reaching a contradiction in the above
inequality.

\noindent {\it 4. Estimates of the terms in~\eqref{ineq3}.}
From~\eqref{eps-min-phi} and the fact that $w(\overline{x})>w(\overline{y})\geq 0,$
we have
\begin{eqnarray}\label{ineg-stricte}
&& \e (e^{-w(\overline{x})}-e^{-w(\overline{y})})(\min \phi -1)
> -\mathcal{H}_0.
\end{eqnarray}

From Lemma~\ref{diff-tracet} \eqref{ineq-tracet}, we have
\begin{eqnarray}\label{ineq-trace840}
&& -{\rm trace}(A(\overline{x})X
-A(\overline{y})Y) \geq -4\nu \Psi''(|\overline{x}-\overline{y}|)
-\tilde{C} \Psi'(|\overline{x}-\overline{y}|) + O(\varrho).
\end{eqnarray}

Using~\eqref{cond-sig}, \eqref{H3er} and recalling that
$|p|=\Psi'(|\overline{x}-\overline{y}|),$ we get
\begin{eqnarray}\label{estim-term-souslin}
&& |\underline{G}(\overline{x},w(\overline{x}),p)
-\underline{G}(\overline{y},w(\overline{y}),p)|\\\nonumber
&\leq &
|e^{-w(\overline{x})}\underline{H}(\overline{x},e^{w(\overline{x})} p )|
+ |e^{-w(\overline{y})}\underline{H}(\overline{y},e^{w(\overline{y})} p )|
+ ||\sigma (\overline{x})^Tp|^2-|\sigma(\overline{y})^Tp|^2|\\\nonumber
&\leq &
2\underline{C}(1+\Psi'(|\overline{x}-\overline{y}|))
+ 2|\sigma|_\infty |\sigma_x|_\infty
 |\overline{x}-\overline{y}| {\Psi'}^2(|\overline{x}-\overline{y}|).
\end{eqnarray}

We now estimate $\overline{G}(\overline{x},w(\overline{x}),p)
- \overline{G}(\overline{y},w(\overline{y}),p)$ using~\eqref{BS-ell}.
Set $P:= e^{w(\overline{x})}p$ and $\mu:= e^{w(\overline{x})-w(\overline{y})},$
we have
\begin{eqnarray}\label{diffG45}
\overline{G}(\overline{x},w(\overline{x}),p)
- \overline{G}(\overline{y},w(\overline{y}),p)
&=&
e^{-w(\overline{x})}\left(
\overline{H} (\overline{x}, P)
- \mu
\overline{H} (\overline{y}, \frac{P}{\mu})
\right).
\end{eqnarray}
From the choice of $A_1$ in~\eqref{choix-cstes1} and the concavity
of $\Psi,$ we get
\begin{eqnarray}\label{cond-ell-2}
&&|P|\geq |p|=\Psi'(|\overline{x}-\overline{y}|) \geq \Psi'(r)=A_1 e^{-A_2 r}
=(L+{\rm osc}(\phi)) e^{A_2(\sqrt{N}-r)}\geq L.
\end{eqnarray}
Since the maximum in~\eqref{max034} is positive, it follows
\begin{eqnarray}\label{cond-mu12}
&& \mu \geq 1+ w(\overline{x})-w(\overline{y}) > 1+ \Psi(|\overline{x}-\overline{y}|)
\geq 1+ \Psi'(|\overline{x}-\overline{y}|)|\overline{x}-\overline{y}|\geq 1+ L|\overline{x}-\overline{y}|.
\end{eqnarray}
From~\eqref{ineg135} and since $|p|\leq \Psi'(0)= A_1,$ we notice
\begin{eqnarray}\label{pborne}
&& L\leq |p|\leq A_1= A_1(\sigma, \underline{C}, \overline{C}, L, {\rm osc}(\phi))
\quad \text{and} \quad
1+ L |\overline{x}-\overline{y}|\leq \mu\leq  {\rm osc}(\phi)+1,
\end{eqnarray}
which will be useful in the proof of Theorem~\ref{uni_grad}.
It follows that we can apply~\eqref{BS-ell} to~\eqref{diffG45} to get
\begin{eqnarray}\label{estim-surlin}
&& \overline{G}(\overline{x},w(\overline{x}),p)
- \overline{G}(\overline{y},w(\overline{y}),p)
\geq -\overline{C}e^{-w(\overline{x})}|P|= -\overline{C}\Psi'(|\overline{x}-\overline{y}|).
\end{eqnarray}

Plugging  \eqref{ineg-stricte}, \eqref{ineq-trace840}, \eqref{estim-term-souslin}
and~\eqref{estim-surlin} in~\eqref{ineq3}  and by letting $\varrho\to 0,$ we obtain
\begin{eqnarray*}
&& -4\nu \Psi''(|\overline{x}-\overline{y}|)  
- (\tilde{C}+2\underline{C}+\overline{C})\Psi'(|\overline{x}-\overline{y}|)
-2|\sigma|_\infty |\sigma_x|_\infty |\overline{x}-\overline{y}| 
{\Psi'}^2(|\overline{x}-\overline{y}|) -\mathcal{H}_0-2\underline{C} <0.
\end{eqnarray*}
Since $|\overline{x}-\overline{y}|\leq r$ by~\eqref{xmoinsy}, using that
for all $s\in [0,r],$ $\Psi''(s)+A_2\Psi'(s)=0$ and
\begin{eqnarray}\label{controleparosc}
&&|\overline{x}-\overline{y}| {\Psi'}^2(|\overline{x}-\overline{y}| )
\leq \Psi(|\overline{x}-\overline{y}| )\Psi'(|\overline{x}-\overline{y}| )
\leq \Psi(r)\Psi'(|\overline{x}-\overline{y}| )= {\rm osc}(\phi)\Psi'(|\overline{x}-\overline{y}| ), 
\end{eqnarray}
we can rewrite the above estimate as
\begin{eqnarray*}
\left( 4\nu A_2 -(\tilde{C}+2\underline{C}+\overline{C}
+2|\sigma|_\infty |\sigma_x|_\infty{\rm osc}(\phi))\right)\Psi'(|\overline{x}-\overline{y}|)
-\mathcal{H}_0-2\underline{C} <0.
\end{eqnarray*}
It is then straightforward to see that~\eqref{choix-cstes1} leads
to a contradiction in the above inequality.
Finally, we obtain the result with $K=A_1.$
\end{proof}

We turn to the proof of Theorem~\ref{uni_grad}.
To use the previous lemmas, we need first to build a 
{\em continuous} viscosity solution of~\eqref{AE}.
It is straightforward to build a discontinuous viscosity solution
for~\eqref{AE} using Perron's method for viscosity solutions
of second order equations, see \cite{ishii87, cil92}.
The continuity of this solution usually follows from a  strong comparison
principle for~\eqref{AE}, i.e., a comparison
principle between
USC viscosity subsolutions and LSC viscosity supersolutions.
But, classical structure assumptions on the first order nonlinearity
$\underline{H}+\overline{H}$ like Lipschitz continuity
in $x$-variable, do not hold here. It follows that
a strong comparison result may not hold. We are only able to compare
a discontinuous sub- or supersolution with a Lipschitz continuous solution,
see Theorem~\ref{comp-avec-lip} 
in the Appendix. That is why, we need another approach inspired from~\cite{bs01} to
build a continuous solution of~\eqref{AE}. It is based on a truncation of 
the nonlinearity. Another natural approach would be to use  the
classical regularity theory for uniformly elliptic equations but it would not
apply for degenerate equations we will consider in the next section.

\begin{proof}[Proof of Theorem~\ref{uni_grad}]
We fix $\e>0$ and skip the $\e$-dependence in the proof for simplicity.

\noindent
{\it 1. Truncated Hamiltonian.}
For all $n>0,$ we define the continuous Hamiltonian
\begin{eqnarray}\label{H-approche}
&& H_{ n}:= \underline{H}_{ n}+\overline{H}_{ n} 
\end{eqnarray}
with
\begin{eqnarray*}
&& \underline{H}_{n}(x,p)\text{ (resp. $\overline{H}_{n}(x,p)$)}=\left\{ 
\begin{array}{ll}
\underline{H}(x,p)\text{ (resp. $\overline{H}(x,p)$)}
& \text{if } |p|\leq n,\\
\underline{H}(x,n\frac{p}{|p|})
\text{ (resp. $\overline{H}(x,n\frac{p}{|p|})$)} & \text{if } |p|\geq n.
\end{array}
\right.
\end{eqnarray*}

\noindent
{\it 2.  Construction of a continuous viscosity solution
for the truncated equation.}
We have a strong comparison principle between discontinuous
solutions for~\eqref{AE} where $\underline{H},$
$\overline{H}$ are replaced with $\underline{H}_{n}, 
\overline{H}_{n}$ respectively,
see Theorem~\ref{comp-discont}.
By Perron's method, see \cite{cil92, ishii87}, there exists
a continuous viscosity solution $\phi_n.$ 

\noindent
{\it 3. Uniform Lipschitz bound for solutions
of the truncated equation.}
We first notice that $\underline{H}_{n}$ satisfies~\eqref{H3er}
with the same constant $\underline{C}$ as $\underline{H}.$
Moreover, if we choose $n$ bigger than $L$ which appears in~\eqref{oscill-ell},
then, by Lemma~\ref{weakuniform2}, we obtain a bound for the oscillation
of $\phi_n$ which is independent of $n, \e.$ Moreover, if $n$
is chosen bigger than the right hand side of \eqref{pborne},
then~\eqref{BS-ell} hold for $\overline{H}_{n}$ with
the same constants as for $\overline{H}$ for all
$p$ satisfying \eqref{pborne}. As noticed in the proof of 
Lemma~\ref{uniform2}, it is enough to obtain 
a gradient bound $K$ for $w_n$ defined by
\begin{eqnarray*}
{\rm exp}(w_n)=\phi_n- \mathop{\rm min}_{\T^N}\phi_n +1.
\end{eqnarray*}
The crucial point is that this gradient bound $K$ does not depend on $n$
since the constants in~\eqref{H3er},~\eqref{oscill-ell},~\eqref{BS-ell}  
are the same for all $\underline{H}_{n}, \overline{H}_{n}.$
The uniform bound for the oscillation of $\phi_n$ yields
a $L^\infty$ bound for $w_n$
which is independent of $n, \e.$ It follows
\begin{eqnarray}\label{borne-grad-phin}
|D\phi_n|_\infty\leq K {\rm exp}(2\sqrt{N}K).
\end{eqnarray}

\noindent
{\it 4. Convergence of $\phi_n.$}
In addition to~\eqref{borne-grad-phin}, from~\eqref{eps-phi-borne},
we have a $L^\infty$ bound for $\phi_n$ which is independent of $n.$
Then, by Ascoli-Arzela's theorem, we obtain that, up to a subsequence, 
$\phi_n \to \phi$ in $C(\T^N)$
and $\phi=\phi^\e$ is Lipschitz continuous with $|D\phi|_\infty \leq K e^{2\sqrt{N}K},$
which is independent of $\e.$
Noticing that  $\underline{H}_{n}\to 
\underline{H}, \overline{H}_{n}\to 
\underline{H}$. By the stability
result for viscosity solution, we conclude that $\phi$ is a Lipschitz
continuous viscosity solution of~\eqref{AE}. 

\noindent
{\it 5. Uniqueness.}
The uniqueness of $\phi$ in the class of continuous viscosity solutions
relies on a comparison principle between the Lipschitz solution $\phi$
with any continuous solution $\tilde{\phi}$ of~\eqref{AE}, see 
Theorem~\ref{comp-avec-lip}
in the Appendix.
\end{proof}

\subsection{Gradient bounds for degenerate elliptic equations}
\label{sec:bound-scalar-dege}

We now consider degenerate elliptic equations, i.e.,~\eqref{sig-deg} does not
necessarily hold. In this case, we suppose
that the sublinear Hamiltonian $\underline{H}\equiv 0$ in \eqref{AE} and we 
reinforce the assumptions~\eqref{oscill-ell}-\eqref{BS-ell}
in order that the superlinear Hamiltonian $\overline{H}$
controls all the terms in the equation.

The assumptions are
\begin{eqnarray}\label{ssa5}
&&  \left\{\begin{array}{l}
\text{There exists $L,\mu>1$ 
such that}\\[2mm] 
\text{for all  $x,y\in\T^N,$ $|p|\ge L$,}\\[2mm]
\displaystyle  \overline{H}(x,p)-\mu \overline{H}(y,\frac{p}{\mu}) 
\ge (\mu-1)\left( |\overline{H}(\cdot ,0)|_\infty
+ N |\sigma_x|_\infty^2|p|\right),
\end{array}\right.
\end{eqnarray}
\begin{eqnarray}\label{BS}
&&  \left\{\begin{array}{l}
\text{There exists $L>1$ such that}\\[2mm]
\text{for all $x,y\in\T^N,$ $|p|\geq L$
and $\mu\geq 1+L|x-y|,$}\\[2mm]
\displaystyle 
\overline{H}(x,p)-\mu \overline{H}(y,\frac{p}{\mu}) 
\ge (\mu-1)\left( |\overline{H}(\cdot ,0)|_\infty
+ N |\sigma_x|_\infty^2|p|+ 2|\sigma_x|_\infty
|\sigma|_\infty|p|\right).
\end{array}\right.
\end{eqnarray}
As for the uniformly elliptic case, the first assumption is needed to get 
the oscillation bound and the stronger one to obtain the gradient bound.
Some discussion about these assumptions and examples
are given in Section~\ref{sec:exple1}.

\begin{thm}\label{uni_grad_dege}
Assume \eqref{cond-sig},~\eqref{BS}
and suppose that  $\underline{H}\equiv 0$.
For all $\e >0,$ there exists a unique continuous viscosity solution 
$\phi^\e\in C(\T^N)$ of \eqref{AE} and a constant $K>0$ independent of 
$\e$ such that
\begin{eqnarray*}
|D\phi^\e|_{\infty} \le K.
\end{eqnarray*}
\end{thm}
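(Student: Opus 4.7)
The proof follows the three-stage template of Theorem~\ref{uni_grad}: first a uniform oscillation bound, then a uniform gradient bound obtained via the exponential change $e^{w^\e} = \phi^\e - \min \phi^\e + 1$, and finally existence and uniqueness via truncation. The decisive difference with the uniformly elliptic case is that only the weaker (first) trace estimate in Lemma~\ref{diff-tracet} is available, so the strongly negative term $-4\nu\Psi''$ is lost; the whole burden of controlling both the degenerate trace estimate and the quadratic $|\sigma^T Dw|^2$ that arises after the exponential change falls on the superlinearity of $\overline H$ as encoded in the stronger assumption \eqref{BS}.

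\emph{Oscillation.} The first observation is that \eqref{BS} implies its oscillation-type counterpart \eqref{ssa5}: picking $\mu_0\geq 1+L\sqrt N$ makes the constraint $\mu\geq 1+L|x-y|$ automatic on the torus, so for $|p|\geq L$ the inequality of \eqref{BS} reduces to that of \eqref{ssa5} with $\mu=\mu_0$ (after dropping the nonnegative last term). I would then mimic the proof of Lemma~\ref{weakuniform2}, using the doubling $M_{\mu_0}=\max_{x,y}\{\phi(x)+(\mu_0-1)\min\phi-\mu_0\phi(y)-\Psi(|x-y|)\}$ with the concave $\Psi(s)=(A_1/A_2)(1-e^{-A_2 s})$. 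The bad trace estimate $-N|\sigma_x|_\infty^2|\ov x-\ov y|\Psi'$ is now compensated by the extra term $(\mu_0-1)N|\sigma_x|_\infty^2|p|$ on the right-hand side of \eqref{ssa5}, since $|p|=\Psi'$ and $|\ov x-\ov y|\leq \sqrt N\leq \mu_0-1$; a suitable choice of $A_2$ then yields the contradiction and the uniform bound ${\rm osc}(\phi^\e)\leq K$.

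\emph{Gradient bound.} Setting $e^{w^\e}=\phi^\e-\min\phi^\e+1$, the hypothesis $\underline H\equiv 0$ shows that $w^\e$ solves
\[
\e\, e^{-w}(\min\phi-1)+\e-{\rm trace}(A\, D^2 w)-|\sigma^T Dw|^2+\overline G(x,w,Dw)=0,\qquad \overline G(x,w,p)=e^{-w}\overline H(x,e^w p).
\]
I would follow the argument of Lemma~\ref{uniform2}, considering $\max\{w(x)-w(y)-\Psi(|x-y|)\}$ with the same type of concave $\Psi$ and constants chosen in the spirit of \eqref{choix-cstes1}. A strictly positive maximum at $(\ov x,\ov y)$ yields the localization $|\ov x-\ov y|<r$ with $\Psi(r)={\rm osc}(\phi)$ and the admissibility condition $\mu:=e^{w(\ov x)-w(\ov y)}\geq 1+L|\ov x-\ov y|$, so \eqref{BS} applies to $P=e^{w(\ov x)}p$. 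The two extra linear-in-$|p|$ terms in \eqref{BS} (compared to \eqref{BS-ell}) are tailored for exactly this situation: $(\mu-1)N|\sigma_x|_\infty^2|p|$ absorbs the degenerate trace estimate, while $(\mu-1)2|\sigma_x|_\infty|\sigma|_\infty|p|$ controls the quadratic contribution $\bigl||\sigma(\ov x)^T p|^2-|\sigma(\ov y)^T p|^2\bigr|\leq 2|\sigma|_\infty|\sigma_x|_\infty|\ov x-\ov y|{\Psi'}^2$ through the key localization inequality $|\ov x-\ov y|{\Psi'}^2\leq \Psi(r)\Psi'={\rm osc}(\phi)\Psi'$ combined with $\mu-1\geq \Psi(|\ov x-\ov y|)\geq L|\ov x-\ov y|$. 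Using $\Psi''+A_2\Psi'=0$ and choosing $A_2$ large produces the contradiction and the bound $|Dw^\e|_\infty\leq A_1$, hence a uniform Lipschitz bound on $\phi^\e$.

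\emph{Existence and uniqueness} then follow exactly as in the proof of Theorem~\ref{uni_grad}: introduce a truncated superlinear Hamiltonian $\overline H_n$, bounded for $|p|\geq n$, build a continuous $\phi_n$ by Perron's method and the strong comparison principle of Theorem~\ref{comp-discont}, observe that \eqref{BS} holds for $\overline H_n$ with the same constants on the bounded range of $|p|$ actually used in the previous step, so both bounds above are $n$-independent, and pass to the limit via Ascoli--Arzel\`a and stability; uniqueness in $C(\T^N)$ comes from Theorem~\ref{comp-avec-lip}. The main obstacle is unambiguously the gradient step: it is precisely the careful engineering of the extra linear corrections in \eqref{BS}, combined with the oscillation-driven localization that converts the obstructive quadratic $|\ov x-\ov y|{\Psi'}^2$ into the controllable ${\rm osc}(\phi)\Psi'$, that makes it possible to close the argument in the absence of ellipticity.
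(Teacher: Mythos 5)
Your overall architecture (oscillation, then gradient via $e^{w}=\phi-\min\phi+1$, then truncation and Perron) is exactly the paper's, and your oscillation step is sound: picking $\mu_0=1+L\sqrt N$ does reduce \eqref{BS} to an \eqref{ssa5}-type inequality, and the bound $|\ov x-\ov y|\le\sqrt N\le\mu_0-1$ lets the $(\mu_0-1)N|\sigma_x|_\infty^2|p|$ gain absorb the degenerate trace estimate. The gradient step, however, does not close as written, and the difficulty is conceptual. In the degenerate setting the trace estimate from Lemma~\ref{diff-tracet} contains no $\Psi''$ term whatsoever; it is simply $-N|\sigma_x|_\infty^2|\ov x-\ov y|\Psi'$. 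Consequently the relation $\Psi''+A_2\Psi'=0$ is irrelevant and ``choosing $A_2$ large'' cannot produce a contradiction, since no $4\nu A_2\Psi'$ term ever appears. That mechanism belongs exclusively to the uniformly elliptic case.

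The correct mechanism, which the paper uses, is an exact cancellation driven by the sharp inequality
\[
\mu-1 \;=\; e^{\,w(\ov x)-w(\ov y)}-1 \;\ge\; w(\ov x)-w(\ov y) \;>\; \Psi(|\ov x-\ov y|) \;\ge\; \Psi'(|\ov x-\ov y|)\,|\ov x-\ov y| \;=\; |p|\,|\ov x-\ov y|,
\]
the last step by concavity of $\Psi$. This gives both $|\ov x-\ov y|\le \mu-1$ (absorbing the trace term into $(\mu-1)N|\sigma_x|_\infty^2|p|$) and $|\ov x-\ov y|\,|p|^2\le(\mu-1)|p|$ (absorbing the quadratic term into $(\mu-1)\,2|\sigma|_\infty|\sigma_x|_\infty|p|$). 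Your chain instead bounds the quadratic term by the oscillation-localization $|\ov x-\ov y|{\Psi'}^2\le{\rm osc}(\phi)\Psi'$ and then claims the superlinear gain $(\mu-1)2|\sigma|_\infty|\sigma_x|_\infty\Psi'$ covers it; but since $\mu\le 1+{\rm osc}(\phi)$, the factor $(\mu-1)$ is in general \emph{smaller} than ${\rm osc}(\phi)$, so that comparison goes the wrong way. Likewise the weakened inequality $\mu-1\ge L|\ov x-\ov y|$ is not enough for the quadratic term (it would require $\Psi'\le L$, but $\Psi'\ge L$). Replace the oscillation-localization by the strict inequality $\mu-1>|p|\,|\ov x-\ov y|$ above, and the terms cancel exactly; the strict $O(\varrho)$-free ``$>$'' coming from the $\e\cdot e^{-w(\ov x)}(\mu-1)|\overline H(\cdot,0)|_\infty$ estimate then yields the contradiction without any tuning of $A_2$. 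The truncation, existence and uniqueness steps at the end of your proposal are fine.
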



The proof of the above theorem is similar to the one of
Theorem~\ref{uni_grad}, so we skip it. It relies on the auxiliary
Lemmas \ref{weakuniform2} and~\ref{uniform2}
where
\eqref{oscill-ell}-\eqref{BS-ell} are replaced by \eqref{ssa5}-\eqref{BS}.
The proofs of the auxiliary lemmas follow the same lines, the changes
occur in the estimates of the terms in~\eqref{visco-ineq185}
and \eqref{ineq3}, so we only rewrite Step 4 of the proof of Lemma~\ref{uniform2}
where the main changes occur.

\begin{proof}[Proof of Lemma~\ref{uniform2} for Theorem~\ref{uni_grad_dege}
under assumptions~\eqref{BS}]
We estimate the different terms appearing in~\eqref{ineq3}.
We set $P:= e^{w(\overline{x})}p$ and $\mu:= e^{w(\overline{x})-w(\overline{y})}$
and we recall that, if the maximum is positive in~\eqref{max034}
and with a suitable choice of $A_1,A_2,r$ in~\eqref{def-phi7}, then
\begin{eqnarray*}
|P|=|e^{w(\overline{x})}p|\geq L>1
\text{ and } 
\mu > 1+ |p||\overline{x}-\overline{y}|\geq  1+ L|\overline{x}-\overline{y}|
\end{eqnarray*}
(see~\eqref{cond-ell-2} and~\eqref{cond-mu12}).
From~\eqref{eps-min-phi}, we get
\begin{eqnarray}\label{strict365}
 \e (e^{-w(\overline{x})}-e^{-w(\overline{y})})(\min \phi -1)
&>& e^{-w(\overline{x})}(1-\mu)\e(\min \phi -1)\\ \nonumber
&\geq& -e^{-w(\overline{x})}(\mu-1)|\overline{H}(\cdot,0)|_\infty.
\end{eqnarray}
From Lemma~\ref{diff-tracet} \eqref{ineq-tracet}, we have
\begin{eqnarray*}
&& -{\rm trace}(A(\overline{x})X
-A(\overline{y})Y) \geq 
-e^{-w(\overline{x})}(\mu -1) N|\sigma_x|_\infty^2 |P|
+ O(\varrho).
\end{eqnarray*}
Since  $\underline{H}\equiv 0,$ we have
\begin{eqnarray*}
\underline{G}(\overline{x},w(\overline{x}),p)-\underline{G}(\overline{y},w(\overline{y}),p)
&=& -|\sigma(\overline{x})^Tp|^2 + |\sigma(\overline{y})^Tp|^2\\
&\geq &
-2 |\sigma|_\infty |\sigma_x|_\infty |\overline{x}-\overline{y}| |p|^2 \nonumber\\
&\geq &
-2e^{-w(\overline{x})}(\mu -1)|\sigma|_\infty |\sigma_x|_\infty |P|.\nonumber
\end{eqnarray*}
As far as the superlinear Hamiltonians are concerned, we have
\begin{eqnarray*}
\overline{G}(\overline{x},w(\overline{x}),p)
- \overline{G}(\overline{y},w(\overline{y}),p)
&=&
e^{-w(\overline{x})}\left(
\overline{H} (\overline{x}, P)
- \mu
\overline{H} (\overline{y}, \frac{P}{\mu})
\right).
\end{eqnarray*}
Plugging the previous estimates in~\eqref{ineq3} and 
applying~\eqref{BS}, we reach a contradiction
with the strict inequality in~\eqref{strict365}.
It ends the proof.
\end{proof}


\subsection{Comments on the assumptions, examples and extensions}
\label{sec:exple1}



\begin{ex} (Hamiltonian satisfying~\eqref{ssa4}) 
If $\mathop{\rm lim\,sup}_{|p|\to +\infty} \frac{H(x,p)}{|p|}=+\infty$
uniformly with respect to $x$
then~\eqref{ssa4} holds. For instance, if
$H (x,p)=a(x) {\rm sin} (h(|p|)) |p|^{1+\alpha}+\ell(x),$
where $\alpha>0,$ $a>0,\ell,h$ are continuous and  $\mathop{\rm lim\,sup}_{r\to +\infty} {\rm sin}(h(r))>0,$
then~\eqref{ssa4} holds. 
\end{ex}

\begin{lem}\label{lem-osc2}\
\begin{itemize}
\item[(i)] A sublinear Hamiltonian $\underline{H}$ satisfying~\eqref{H3er}
satisfies~\eqref{oscill-ell}.

\item[(ii)] If there exists $\alpha \geq 0,$ $a>0,$ $A, B,L\geq 0$
such that, for all $x\in\T^N,$ $|p|\geq L,$ 
\begin{eqnarray*}
A|p|^\alpha + B|p|\geq H(x,p)\geq a|p|^\alpha -B|p|,
\end{eqnarray*}
then~\eqref{oscill-ell} holds 

\end{itemize}
\end{lem}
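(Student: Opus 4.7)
The plan is to verify \eqref{oscill-ell} by a direct lower-bound computation on $\underline{H}(x,p)-\mu\underline{H}(y,p/\mu)$ (resp.\ $H(x,p)-\mu H(y,p/\mu)$), choosing $\mu>1$, $L>1$, and $\overline{C}>0$ at the end so as to absorb all lower-order contributions into a multiple of $|p|$. Both parts reduce to this scheme.

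For (i), I would apply the triangle inequality and the sublinear bound \eqref{H3er} to each Hamiltonian separately, reaching
\[
\underline{H}(x,p)-\mu\underline{H}(y,p/\mu)\geq -\underline{C}(1+|p|)-\mu\underline{C}(1+|p|/\mu)=-\underline{C}(1+\mu)-2\underline{C}|p|.
\]
For $|p|\geq L$, the constant term $-\underline{C}(1+\mu)$ is bounded below by $-\underline{C}(1+\mu)|p|/L$, so any $\mu>1$ (say $\mu=2$) and $L>1$ work, with $\overline{C}:=2\underline{C}+\underline{C}(1+\mu)/L$.

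For (ii), I would first enlarge $L$ so that $|p|\geq L$ implies $|p/\mu|\geq L$ as well; combining the lower bound on $H(x,p)$ with the upper bound on $H(y,p/\mu)$ from the hypothesis then gives
\[
H(x,p)-\mu H(y,p/\mu)\geq (a-A\mu^{1-\alpha})|p|^\alpha-2B|p|.
\]
To dominate the $|p|^\alpha$ piece by $-\overline{C}|p|$, I would split on $\alpha$: when $\alpha>1$, pick $\mu$ large enough that $A\mu^{1-\alpha}\leq a$, rendering the $|p|^\alpha$ coefficient nonnegative; when $\alpha=1$, the first term is already linear in $|p|$ and a large enough $\overline{C}$ works; when $0\leq\alpha<1$, use $|p|^\alpha\leq |p|$ for $|p|\geq 1$ to absorb the (possibly very negative) coefficient $a-A\mu^{1-\alpha}$ into $\overline{C}$. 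The only genuinely delicate point is the case $\alpha>1$: since $|p|^\alpha$ outgrows $|p|$, a negative coefficient in front of $|p|^\alpha$ cannot be absorbed into a term of order $|p|$; this forces $\mu$ to be fixed first, depending on $a$ and $A$, before $\overline{C}$ and $L$ are chosen. In the other regimes, enlarging $\overline{C}$ and $L$ at the very end suffices and the argument is entirely routine.
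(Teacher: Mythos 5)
Your proposal is correct and follows essentially the same route as the paper: the same direct lower-bound computation for (i) with $\mu=2$, and for (ii) the same reduction to the quantity $(a-A\mu^{1-\alpha})|p|^\alpha-2B|p|$ with $\mu\geq(A/a)^{1/(\alpha-1)}$ when $\alpha>1$. The only cosmetic difference is that the paper disposes of the regime $0\leq\alpha\leq1$ by observing that \eqref{H3er} holds there and invoking part (i), whereas you verify those subcases directly; the content is the same.
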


\begin{proof}[Proof of Lemma~\ref{lem-osc2}]
(i) Using \eqref{H3er}, we have, for $\mu=2$ and $|p|>1,$ 
\begin{eqnarray*}
&& \underline{H}(x,p)-\mu \underline{H}(y,\frac{p}{\mu})
\geq 
-C(1+|p|)-C\mu (1+|\frac{p}{\mu}|)
\geq 
-3C(1+|p|)
\geq -6C|p|.
\end{eqnarray*}
(ii) When $0\leq \alpha\leq 1,$ then \eqref{H3er} holds.
For $\alpha >1,$
 \begin{eqnarray*}
&& {H}(x,p)-\mu {H}(y,\frac{p}{\mu})
\geq 
(a-\mu^{1-\alpha}A)|p|^\alpha -2B|p|\geq -2B|p|
\end{eqnarray*}
provided $\mu \geq (A/a)^{1/(\alpha -1)}.$
\end{proof}

Notice that no regularity assumption (except continuity) is needed
to obtain the oscillation bound. To obtain a gradient bound, we need
to reinforce~\eqref{oscill-ell}-\eqref{ssa5} into~\eqref{BS-ell}-\eqref{BS}.
These new assumptions contain a kind of regularity assumption with
respect to $(x,p).$ Actually,~\eqref{BS-ell} is very close to~\eqref{BS-ell-origin} 
in~\cite{bs01} and~\eqref{BS} is very close to
\begin{eqnarray}\label{BS-originale}
&&  \left\{\begin{array}{l}
\overline{H}\in W_{\rm loc}^{1,\infty}(\T^N\times \R^N) 
\text{ and there exists $L$ such that}\\[2mm]
\text{if $|p|\geq L,$ then,  for a.e. $(x,p)\in \T^N \times \R^N,$}\\[2mm]
\displaystyle 
L\left[(\overline{H})_{p}p-\overline{H}
-|\overline{H}(\cdot,0)|_\infty
-N|\sigma_x|_\infty^2|p|
-2|\sigma|_\infty |\sigma_x|_\infty | p|
\right]
-|(\overline{H})_{x}| \geq 0,
\end{array}\right.
\end{eqnarray}
which is an extension of~\eqref{BS-ell-origin} for $x$-dependent
degenerate diffusion matrices.
In~\eqref{BS-ell-origin}-\eqref{BS-originale}, the Hamiltonian is
supposed to be locally Lipschitz with respect to $(x,p).$ When the
Hamiltonians are locally Lipschitz, we can prove that
\eqref{BS-ell}-\eqref{BS} and \eqref{BS-ell-origin}-\eqref{BS-originale}
are equivalent but our assumptions allow to deal with some
non Lipschitz continuous Hamiltonians as shown in the following
example. 

\begin{ex} (A non Lipschitz continuous Hamiltonian satisfying~\eqref{BS-ell}) 
Let $H(x,p)=|p|^2 + h(x,p)$ with $h$ continuous bounded.
For all $x,y\in\T^N, p\in\R^N$ and $\mu >1,$ we have
\begin{eqnarray*}
{H}(x,p)-\mu {H}(y,\frac{p}{\mu})=(1-\frac{1}{\mu})|p|^2
+ h(x,p)-\mu h(y,\frac{p}{\mu})
\geq -(1+\mu )|h|_\infty.
\end{eqnarray*}
From~\eqref{pborne}, we see that~\eqref{BS-ell} has to hold only for bounded
$\mu\geq 1+L|x-y|$ and $|p|\geq L>1.$ So~\eqref{BS-ell} holds. 
\end{ex}

We turn to some examples of sub- and superlinear Hamiltonians.

\begin{ex} (Typical sublinear Hamiltonians from optimal control problem)
\begin{eqnarray*}
\underline{H}(x,p)= \mathop{\rm sup}_{\theta\in\Theta}\{
- \langle b_\theta (x), p\rangle -\ell_\theta (x)\},
\end{eqnarray*}
where $b_\theta, \ell_\theta \in W^{1,\infty}(\T^N)$ uniformly with respect to $\theta.$
Such a $\underline{H}$ satisfies~\eqref{H3er}. Notice that the Lipschitz
continuity is actually not needed in the proof of the gradient bound.
\end{ex}

\begin{ex} (Typical superlinear Hamiltonian)
\begin{eqnarray*}
\overline{H}(x,p)= a(x)|p|^{1+\alpha}+b(x)|p|+c(x),
\end{eqnarray*}
with $a>0,b,c \in W^{1,\infty}(\T^N)$.
Then $\overline{H}$ satisfies~\eqref{BS-ell} and~\eqref{BS}
as soon as~\eqref{cond-sig} holds. 
It follows that the gradient bound of Theorem~\ref{uni_grad} holds 
for~\eqref{AE} even if $\sigma$ is degenerate
(i.e., \eqref{sig-deg} does not hold).
\end{ex}

\begin{ex}
$H(x,p)=|B(x)p|^k+\langle b(x),p\rangle +\ell(x)$
with $B\in C(\T^N ;\R^{N\times N}),$ $b\in C(\T^N;\R^N),$ $\ell\in C(\T^N)$
satisfies~\eqref{BS-ell} if
\begin{eqnarray*}
&& |b_x|_\infty, |B_x|_\infty\leq C \quad \text{and} \quad B(x)B(x)^T > 0. 
\end{eqnarray*}
\end{ex}

\begin{ex}
Let define $\hat{H}(p)=\hat{H}(|p|)$ radial by
$\hat{H}(0)=0$ and, for all $t\in [n,n+1],$ 
$\hat{H}(t)= (n+1)t-n(n+1)/2.$
We notice that $\hat{H}\in W_{\rm loc}^{1,\infty}(\R^N)$ and
$\hat{H}_p(p)p-\hat{H}=0$ a.e.
It follows that $H(x,p)=\hat{H}(p)+\ell (x)$
satisfies~\eqref{BS-ell} if $\ell$ is continuous but does not
satisfy~\eqref{BS} (even if  $\ell$ is Lipschitz continuous).
The Hamiltonian $H(x,p)=\underline{H}(x,p)+\hat{H}(p)+\ell (x)$
where $\underline{H}$ satisfies~\eqref{H3er} fulfills the assumptions
of Theorem~\ref{uni_grad} if the diffusion matrix 
satisfies~\eqref{cond-sig}-\eqref{sig-deg}
but such a case cannot be handled with the results of~\cite{bs01}. 
\end{ex}

\begin{rem}\label{extension-nonlin}
We can extend the results with easy adaptations to fully nonlinear equations like
\begin{eqnarray}\label{hypsigmatheta}
&& \e \phi^\e
+ \mathop{\rm sup}_{\theta\in\Theta}\{
  -{\rm trace}(A_\theta(x) D^2\phi^\e)
+\underline{H}_\theta(x, D\phi^\e )
+\overline{H}_\theta(x, D\phi^\e )
\}
=0, \quad x\in\T^N,
\end{eqnarray}
where $\Theta$ is a compact metric space and there exists a constant $C>0$
such that $A_\theta = \sigma_\theta\sigma_\theta^T$ satisfies 
\begin{eqnarray*}
|\sigma_\theta(x)|\leq C, \quad
|\sigma_\theta(x)-\sigma_\theta(y)|\leq C|x-y|,
\qquad x,y\in\T^N,\theta\in \Theta,
\end{eqnarray*}
and both 
$\underline{H}_\theta, \overline{H}_\theta$ are continuous satisfying,
for all $R>0,$ there exists a modulus
of continuity $m_R$ such that
\begin{eqnarray}\label{hypHtheta}
&&|{H}_\theta(x,0)|\leq C,\quad 
|{H}_\theta(x,p)-{H}_\theta(y,p)|\leq m_R(|x-y|),
\quad x,y\in\T^N, |p|\leq R,\theta\in \Theta.
\end{eqnarray}
Then

\begin{itemize}

\item Theorem~\ref{uni_grad} holds when $A_\theta$ satisfies~\eqref{sig-deg},
$\underline{H}_\theta$ satisfies~\eqref{H3er} and $\overline{H}_\theta$ satisfies~\eqref{BS-ell},
uniformly with respect to $\theta.$

\item Theorem~\ref{uni_grad_dege} holds in particular when $\underline{H}_\theta\equiv 0$
and $\overline{H}_\theta$ satisfies~\eqref{BS} uniformly with respect to $\theta.$

\end{itemize}
\end{rem}

\section{Asymmetric Systems}
\label{assym-sys}

We consider the weakly coupled system
\begin{eqnarray}\label{AEsys}
&& \left\{
\begin{array}{l}
e \phi_1^\e-{\rm trace}(A_1(x) D^2\phi_1)+H_1(x,D\phi_1 )+\phi_1-\phi_2=0,~~x\in\T^N,\\[2mm]
e \phi_2^\e-{\rm trace}(A_2(x) D^2\phi_2)+H_2(x,D\phi_2 )+\phi_2-\phi_1=0,
\end{array}
\right.
\end{eqnarray}
where the 
$A_{i}$'s and $H_i$'s satisfy the steady
assumption~\eqref{cond-sig}.

\subsection{Gradient bounds for systems}
\label{sec:grad-sys}

The aim is to obtain uniform gradient bounds (i.e., independent of $\e$) 
for~\eqref{AEsys} when 
$H_{i}=\underline{H}_{i}+\overline{H}_{i}$. We define
\begin{eqnarray}\label{defcalH}
\mathcal{H}:= \sup_{x\in\T^N,1\leq j\leq 2}|H_{j}(x,0)|.
\end{eqnarray}
and each equation satisfies a different set of assumptions.
\begin{eqnarray}\label{BS222sys}
&&\text{$A_{1}, \underline{H}_{1}, \overline{H}_{1}$ satisfy \eqref{sig-deg},\eqref{H3er},\eqref{BS-ell} respectively,}
\end{eqnarray}
\begin{eqnarray}\label{BS222sysbis}
&&\text{$\underline{H}_{2}\equiv 0$
and $\overline{H}_{2}$ satisfies \eqref{BS} 
with $|\overline{H}(\cdot,0)|$
replaced by $3\mathcal{H}.$}
\end{eqnarray}

Assumption~\eqref{BS222sys} means that the first equation is of uniformly elliptic type
with sublinear Hamiltonian whereas~\eqref{BS222sysbis} tells that the second one may be
degenerate with superlinear Hamiltonian. So the system is asymmetric. This case is
the one in interest in this work but let us mention that the case when both equations
satisfy either~\eqref{BS222sys} or~\eqref{BS222sysbis} is also possible with easier
arguments in the proof of the theorem which follows. See  Section~\ref{sec:exple-sys}
for examples and extensions.

\begin{thm}\label{uni_gradsy}
Assume~\eqref{BS222sys}-\eqref{BS222sysbis}. 
There exists a unique continuous viscosity solution
$\phi^\e=(\phi_1^\e,\phi_2^\e)\in C(\T^N)^2$ of \eqref{AEsys} 
and $K>0$ depending only on the $H_{i}$'s such that
\begin{eqnarray*}
|D\phi_i^\e|_\infty \le K \quad \text{for all } \e>0,~~ i=1,2.
\end{eqnarray*}
\end{thm}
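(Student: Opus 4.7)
The plan is to adapt the two-step strategy (oscillation bound, then Bernstein-type gradient estimate after an exponential change of variables) used for the scalar equations in Theorems~\ref{uni_grad} and~\ref{uni_grad_dege}, performing the doubling of variables on both components simultaneously and taking the maximum over the index $i \in \{1,2\}$ as well. The cornerstone observation is that, at the maximizing index $i_0$, the weak coupling $\phi_i - \phi_j$ enters the subtracted viscosity inequalities with a sign that I can discard, so the argument reduces to the corresponding scalar one for the $i_0$-th equation. Existence of a continuous solution will be obtained exactly as in Theorem~\ref{uni_grad} by truncating each $H_i$ at level $n$ as in~\eqref{H-approche}, applying Perron's method together with the monotone-system comparison principle of the Appendix, and letting $n \to \infty$ once a Lipschitz bound independent of $n$ is established; uniqueness among continuous solutions will follow from a Lipschitz-vs-continuous comparison theorem for systems analogous to Theorem~\ref{comp-avec-lip}.

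For the oscillation bound, set $m_\e := \min\{\min_{\T^N} \phi_1^\e, \min_{\T^N} \phi_2^\e\}$ and consider, for $\mu > 1$ to be chosen large,
\begin{eqnarray*}
M := \max_{x, y \in \T^N,\, i \in \{1,2\}} \{\phi_i^\e(x) - \mu \phi_i^\e(y) + (\mu - 1) m_\e - \Psi(|x-y|)\},
\end{eqnarray*}
with $\Psi$ the concave test function of~\eqref{def-phi7}. If $M > 0$ is attained at $(\bar x, \bar y, i_0)$ with $\bar x \neq \bar y$ and $j_0 \neq i_0$, the maximality in the index gives $\phi_{i_0}^\e(\bar x) - \mu \phi_{i_0}^\e(\bar y) \geq \phi_{j_0}^\e(\bar x) - \mu \phi_{j_0}^\e(\bar y)$, so the coupling contribution $[\phi_{i_0} - \phi_{j_0}](\bar x) - \mu [\phi_{i_0} - \phi_{j_0}](\bar y)$ obtained after differencing the sub- and supersolution inequalities for equation $i_0$ is nonnegative and can be dropped. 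What remains is exactly the scalar inequality treated in Lemma~\ref{weakuniform2} (if $i_0 = 1$, via~\eqref{oscill-ell}) or its degenerate analogue underlying Theorem~\ref{uni_grad_dege} (if $i_0 = 2$, via~\eqref{ssa5}), yielding a contradiction for $\mu$ chosen large enough, and hence ${\rm osc}(\phi_i^\e) \leq K$ uniformly in $\e$.

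For the gradient bound, I perform the \emph{common} exponential change $\exp(w_i^\e) := \phi_i^\e - m_\e + 1$, so that the coupling rewrites as $\phi_i - \phi_j = e^{w_i} - e^{w_j}$ and both $w_i^\e$ are $L^\infty$-bounded by the previous step, and I examine
\begin{eqnarray*}
\max_{x, y \in \T^N,\, i \in \{1,2\}} \{w_i^\e(x) - w_i^\e(y) - \Psi(|x-y|)\}
\end{eqnarray*}
with $\Psi, A_1, A_2, r$ chosen as in~\eqref{choix-cstes1}--\eqref{choix-r}. A positive maximum at $(\bar x, \bar y, i_0)$ still satisfies the localization~\eqref{xmoinsy}, and maximality in the index now yields $w_{j_0}^\e(\bar x) - w_{i_0}^\e(\bar x) \leq w_{j_0}^\e(\bar y) - w_{i_0}^\e(\bar y)$, so the transformed coupling difference $e^{w_{j_0}^\e(\bar x) - w_{i_0}^\e(\bar x)} - e^{w_{j_0}^\e(\bar y) - w_{i_0}^\e(\bar y)}$ is nonpositive and enters with the favorable sign after the subtraction. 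The problem then reduces to the scalar computation of Lemma~\ref{uniform2}: if $i_0 = 1$, uniform ellipticity~\eqref{sig-deg} together with~\eqref{H3er} and~\eqref{BS-ell} closes the estimate; if $i_0 = 2$, the enlargement $|\overline{H}(\cdot,0)|_\infty \mapsto 3 \mathcal{H}$ in~\eqref{BS222sysbis} provides the quantitative margin needed to absorb both the residual coupling and the system version of the bound $|\e m_\e| \leq C \mathcal{H}$, so that the degenerate argument of Theorem~\ref{uni_grad_dege} goes through.

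The hardest part is precisely the gradient step: a single common test function and a single common exponential change must accommodate two structurally different equations. The exponential change is mandatory for the degenerate superlinear equation~$2$, yet it produces a quadratic term $|\sigma_1^T D w_1^\e|^2$ in equation~$1$ that is not a priori controlled by its sublinear Hamiltonian. The oscillation bound of Step~2 is exactly what enables the localization~\eqref{xmoinsy}, which in turn tames this quadratic term via~\eqref{controleparosc}; the enlarged constant $3\mathcal{H}$ in~\eqref{BS222sysbis} then absorbs the system-specific overhead so that the scalar choice of constants in~\eqref{choix-cstes1} still produces a contradiction.
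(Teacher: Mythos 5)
Your proposal follows the paper's strategy quite closely (truncation plus Perron for existence, oscillation bound, exponential change and Bernstein for the gradient), and the one genuinely different choice — shifting all components by the \emph{common} minimum $m_\e=\min(m_1^\e,m_2^\e)$ rather than each $\phi_i^\e$ by its own $m_i^\e=\min\phi_i^\e$ as the paper does — is in fact a small improvement: it eliminates the residual $m_i-m_{i+1}$ from the lower-order term $b_i$, so the coupling difference contributes exactly with the favorable sign and the quantitative enlargement $3\mathcal{H}$ in~\eqref{BS222sysbis} becomes overkill ($\mathcal{H}$ would do). (Your remark that $3\mathcal{H}$ must ``absorb the residual coupling'' is therefore internally inconsistent with your own change of variables: in your setup there is no such residual.)

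There is, however, a genuine gap. You assert that ``both $w_i^\e$ are $L^\infty$-bounded by the previous step.'' That is not true: with $e^{w_i^\e}=\phi_i^\e-m_\e+1$ one has $\sup w_i^\e=\log\bigl({\rm osc}(\phi_i^\e)+(m_i^\e-m_\e)+1\bigr)$, and the oscillation bound controls ${\rm osc}(\phi_i^\e)$ but says nothing about $m_i^\e-m_\e=|m_1^\e-m_2^\e|$ (two constant functions with any gap between them have zero oscillation). This bound is indispensable: the localization~\eqref{xmoinsy} and the control of $\mu$ can be salvaged via $w_i(\bar x)-w_i(\bar y)\le{\rm osc}(\phi_i)$, but the final passage from $|Dw_i^\e|\le K$ to $|D\phi_i^\e|\le K e^{\sup w_i^\e}$ genuinely requires $\sup w_i^\e$ bounded, hence $|m_1^\e-m_2^\e|$ bounded uniformly in $\e$. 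The missing ingredient is exactly the paper's estimate~\eqref{estimate} (Step~5 of the proof of Lemma~\ref{uniform3}): evaluating the $i$-th equation at a minimum point of $\phi_i^\e$ and using $\phi_i-\phi_{i+1}\ge{\rm tr}(A_iX)-\e\phi_i-H_i(\cdot,0)$ together with $|\e\phi_i^\e|_\infty\le\mathcal{H}$ gives $m_{i+1}^\e-m_i^\e\le 2\mathcal{H}$ (and by symmetry the reverse), so $|m_1^\e-m_2^\e|\le 2\mathcal{H}$. This one-line maximum-principle argument uses the coupling structure, not the oscillation bound, and must be supplied; once it is, your proof closes.
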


Similarly to the case of scalar equations, the proof consists 
in two main steps: first, we prove an uniform bound for the 
oscillation and we then improve it to a uniform gradient bound.
The key lemmas are
\begin{lem}\label{uni_oscisy}
Under the hypotheses of Theorem~\ref{uni_gradsy}. 
Let 
$\phi^\e=(\phi_1^\e,\phi_2^\e)\in C(\T^N)^2$ be a solution of \eqref{AEsys}.
There exists $K>0$ depending only on the $H_{i}$'s such that
\begin{eqnarray*}
{\rm osc}(\phi_i^\e) \le K \quad \text{for all } \e>0,~~ i=1,2.
\end{eqnarray*}
\end{lem}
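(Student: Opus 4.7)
The strategy is to adapt the doubling-variables argument of Lemma~\ref{weakuniform2} to the coupled system by taking the supremum over the index $i\in\{1,2\}$ as well. Two preliminary facts are needed. First, the standard maximum-principle argument applied at the global extremes of the pair $(\phi_1^\e,\phi_2^\e)$, using that the coupling $\phi_i-\phi_{3-i}$ has the favourable sign at those points, yields $-\mathcal{H}\le \e m_\e$ and $\e\max_{j,x}\phi_j^\e\le\mathcal{H}$, where $m_\e := \min_{j,x}\phi_j^\e(x)$ and $\mathcal{H}$ is defined in~\eqref{defcalH}. Second, evaluating the supersolution of equation $j_*$ (where $j_*$ realises $m_\e$) at the global minimum point produces the one-sided comparison $\min\phi^\e_{3-j_*}-m_\e \le \e m_\e+\mathcal{H} \le 2\mathcal{H}$, which will bound the oscillation of the component that does not attain the global minimum. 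I then fix $\mu_0=1+L\sqrt{N}$, with $L$ the maximum of the constants appearing in~\eqref{BS-ell} and~\eqref{BS}, so that $\mu_0\ge 1+L|x-y|$ for every $x,y\in\T^N$, and use the test function $\Psi(s)=(A_1/A_2)(1-e^{-A_2 s})$ with $A_1=Le^{A_2\sqrt{N}}$ (ensuring $\Psi'\ge L$) and $A_2$ to be fixed later.

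With these choices I consider
\[
M_\e \;=\; \max_{i\in\{1,2\},\,(x,y)\in\T^N\times\T^N}
\big\{\phi_i^\e(x) + (\mu_0-1)\,m_\e - \mu_0\,\phi_i^\e(y) - \Psi(|x-y|)\big\}.
\]
In the easy case $M_\e\le 0$, choosing $y$ to be a minimiser of $\phi_i^\e$ gives ${\rm osc}(\phi_i^\e)\le (\mu_0-1)(\min\phi_i^\e-m_\e)+A_1/A_2$; combining with $\min\phi^\e_{j_*}-m_\e=0$ and $\min\phi^\e_{3-j_*}-m_\e\le 2\mathcal{H}$ yields an $\e$-independent bound for both $i=1,2$.

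Otherwise $M_\e$ is attained at some $(i^*,\overline{x},\overline{y})$ with $\overline{x}\neq\overline{y}$. The crucial observation is that maximality in $i$ implies $\phi^\e_{i^*}(\overline{x})-\mu_0\phi^\e_{i^*}(\overline{y})\ge\phi^\e_{3-i^*}(\overline{x})-\mu_0\phi^\e_{3-i^*}(\overline{y})$, so the coupling contribution arising when the viscosity inequalities for equation $i^*$ at $\overline{x}$ (subsolution) and $\overline{y}$ (supersolution) are combined is nonnegative and may be discarded. What remains is the scalar-type inequality of Lemma~\ref{weakuniform2}. If $i^*=1$, assumption~\eqref{BS222sys} lets me invoke the uniformly elliptic variant of Lemma~\ref{diff-tracet}, the sublinear estimate~\eqref{H3er} for $\underline{H}_1$, and~\eqref{BS-ell} for $\overline{H}_1$ (applicable since $\mu_0\ge 1+L|\overline{x}-\overline{y}|$ and $|p|=\Psi'(|\overline{x}-\overline{y}|)\ge L$), and the computation of Lemma~\ref{weakuniform2} produces a contradiction for $A_2$ large enough. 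If $i^*=2$, since $\underline{H}_2\equiv 0$ I apply instead the general (degenerate) form of Lemma~\ref{diff-tracet} together with~\eqref{BS} (with $|\overline{H}(\cdot,0)|_\infty$ replaced by $3\mathcal{H}$), following Step~4 of the proof of Theorem~\ref{uni_grad_dege}: the additional terms on the right-hand side of~\eqref{BS} are precisely tailored to absorb the $\sigma_x$-contributions from the degenerate trace estimate.

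The main obstacle is coordinating a single doubling parameter $\mu_0$ for two equations of different nature. This is resolved by exploiting the compactness of $\T^N$: because $|x-y|\le\sqrt{N}$, the choice $\mu_0=1+L\sqrt{N}$ makes both~\eqref{BS-ell} and~\eqref{BS} simultaneously available, so a single test function $\Psi$ works for both cases. A secondary subtlety is to keep the oscillation bound for the component not realising the global minimum independent of $\e$; this is precisely the role of the one-sided comparison $\min\phi^\e_{3-j_*}-m_\e\le 2\mathcal{H}$ derived above from the coupling.
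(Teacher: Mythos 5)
Your proof is correct and follows the same overall strategy as the paper (doubling variables with the concave test function $\Psi$, maximising over the index $i$, discarding the coupling by its sign, then applying the scalar estimates case by case according to~\eqref{BS222sys} or~\eqref{BS222sysbis}), but with two genuine variations worth flagging. The paper's own proof is skipped, but the intended adaptation is visible in the proof of Lemma~\ref{uniform3}: there one normalises \emph{per equation} via $m_i=\min_{\T^N}\phi_i^\e$ and controls the resulting residual coupling through the estimate $m_i-m_{i+1}+\e m_i\le 3\mathcal{H}$ (equation~\eqref{estimate}). You instead normalise by the \emph{global} minimum $m_\e=\min_{j,x}\phi^\e_j(x)$; this makes the coupling contribution in the subtracted viscosity inequalities immediately nonnegative (since the same quantity $(\mu_0-1)m_\e$ is added for both indices, maximality in $i$ gives the sign directly), at the cost of the one-sided comparison $\min\phi^\e_{3-j_*}-m_\e\le 2\mathcal{H}$, which you correctly derive from the supersolution inequality of equation $j_*$ at the global minimum point and which you need in the ``easy case'' to pass from your doubled functional to the oscillation of the component not realising the global minimum. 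The second variation is the fixed $\mu_0=1+L\sqrt{N}$: you can afford this because the hypotheses of Theorem~\ref{uni_gradsy} give you~\eqref{BS-ell} and~\eqref{BS} (which hold for \emph{all} admissible $\mu\ge 1+L|x-y|$), rather than only the weaker~\eqref{oscill-ell} and~\eqref{ssa5} (which fix a particular $\mu$). Exploiting $|x-y|\le\sqrt{N}$ to make a single $\mu_0$ serve both the uniformly elliptic and the degenerate equation is a clean way to coordinate the two cases. One small inaccuracy in your narrative: the extra term $2|\sigma_x|_\infty|\sigma|_\infty|p|$ in~\eqref{BS} and the factor $3\mathcal{H}$ are calibrated for the gradient bound (where the exponential change of variables introduces $|\sigma^T p|^2$ and where the coupling constant $\mathcal{B}$ carries a $-3\mathcal{H}$); in the oscillation argument they merely provide harmless slack, since the coupling has already been discarded and there is no quadratic term. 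Your computation still closes because the contradiction is strict through the $\e$-term, so this does not affect the validity.
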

We skip the proof of Lemma~\ref{uni_oscisy} since it is similar to the case of scalar equations.

\begin{lem}\label{uniform3}
Under the hypotheses of Theorem~\ref{uni_gradsy}, let $\phi^\e$ be a 
continuous viscosity solution of~\eqref{AEsys}
and define $w^\e$ by
\begin{eqnarray*}
\exp(w^\e_i)=\phi^\e_i-\min_{\T^N} \phi^\e_i+1
\quad \text{for all $\e,$ $i$}.
\end{eqnarray*}
Then, there exists a constant $K$ independent of $\e$ 
such that $|Dw^\e_i|_\infty\le K$ for all $\e, i$.
\end{lem}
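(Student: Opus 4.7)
The plan is to extend the doubling-variables argument of Lemma~\ref{uniform2} to the system by taking the maximum not only over pairs $(x,y) \in \T^N \times \T^N$ but also over the index $i \in \{1,2\}$. As in~\eqref{def-phi7}, I set $\Psi(s) = \frac{A_1}{A_2}(1-e^{-A_2 s})$ with constants $A_1, A_2$ to be chosen later (depending on the data of both equations and on the uniform oscillation bound from Lemma~\ref{uni_oscisy}), and consider $M := \max_{i \in \{1,2\}} \max_{x,y \in \T^N} \{w_i^\e(x) - w_i^\e(y) - \Psi(|x-y|)\}$. If $M \leq 0$, the concavity of $\Psi$ immediately yields $|Dw_i^\e|_\infty \leq A_1$ and we are done. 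Otherwise, $M > 0$ is achieved at some $(\overline{x}, \overline{y})$ with $\overline{x} \neq \overline{y}$ and at some index $i_0 \in \{1,2\}$, which without loss of generality I assume to be $i_0 = 1$.

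The first task is to write the PDE satisfied by each $w_i^\e$: substituting $\phi_i = e^{w_i} + \min\phi_i - 1$ in~\eqref{AEsys} and dividing by $e^{w_i}$ yields
\[ \e + \e e^{-w_i}(\min\phi_i - 1) + (1 - e^{w_j - w_i}) + e^{-w_i}(\min\phi_i - \min\phi_j) - {\rm tr}(A_i D^2 w_i) + \underline{G}_i + \overline{G}_i = 0, \]
with $\underline{G}_i, \overline{G}_i$ defined from $(A_i, H_i)$ as in~\eqref{new-hamilt} and $j \neq i$. A comparison with the constants $\pm\mathcal{H}/\e$ (which are sub/supersolutions of~\eqref{AEsys} because the coupling of a constant pair vanishes) gives $\e|\phi_i^\e|_\infty \leq \mathcal{H}$; evaluating the supersolution inequality at a minimum point of $\phi_i^\e$ then yields $|\min\phi_1^\e - \min\phi_2^\e| \leq 2\mathcal{H}$. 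Combined with the $\e$-uniform $L^\infty$ bound on $w_i^\e$ from Lemma~\ref{uni_oscisy}, this shows that every coupling-induced term in the displayed equation is bounded by a universal multiple of $\mathcal{H}$, uniformly in $\e$.

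I would then write the viscosity inequalities for $w_1^\e$ (subsolution at $\overline{x}$, supersolution at $\overline{y}$) and subtract, as in Step 3 of the proof of Lemma~\ref{uniform2}. The crucial new observation is that maximality of $M$ over $i$ gives $w_2(\overline{x}) - w_2(\overline{y}) \leq w_1(\overline{x}) - w_1(\overline{y})$, hence $e^{w_2(\overline{y}) - w_1(\overline{y})} - e^{w_2(\overline{x}) - w_1(\overline{x})} \geq 0$, which is precisely the sign needed to make the difference of the $(1 - e^{w_j - w_i})$ coupling terms non-negative on the left-hand side of the subtracted inequality, and hence absorbable. The remaining coupling contributions $(e^{-w_1(\overline{x})}-e^{-w_1(\overline{y})})(\min\phi_1-\min\phi_2)$ and the $\e$-term are bounded below by a universal multiple of $-\mathcal{H}$, yielding a lower bound analogous to~\eqref{ineg-stricte}. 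From here the proof proceeds exactly as in Step 4 of the proof of Lemma~\ref{uniform2}: uniform ellipticity of $A_1$ in~\eqref{BS222sys} gives the trace estimate~\eqref{ineq-tracet}, assumption~\eqref{H3er} controls $\underline{G}_1$ as in~\eqref{estim-term-souslin}, and~\eqref{BS-ell} on $\overline{H}_1$ controls $\overline{G}_1$ via~\eqref{diffG45}--\eqref{estim-surlin} (the localization~\eqref{xmoinsy} is preserved because it uses only the oscillation bound). A contradiction is reached by choosing $A_1, A_2$ as in~\eqref{choix-cstes1}, enlarged by a universal multiple of $\mathcal{H}$ to absorb the extra coupling constants. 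The parallel case $i_0 = 2$ runs the degenerate-equation argument from the proof of Theorem~\ref{uni_grad_dege}: since $\underline{H}_2 \equiv 0$ and~\eqref{BS} holds for $\overline{H}_2$ with $|\overline{H}(\cdot,0)|$ replaced by $3\mathcal{H}$, the strengthened right-hand side is precisely what is needed to absorb the coupling contributions without relying on any ellipticity of $A_2$.

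The main obstacle is identifying the correct doubling --- namely, including the index $i$ in the maximization --- and verifying that this renders the coupling contribution sign-favorable in the subtracted viscosity inequality. Once this is in place, the remainder is a careful bookkeeping exercise running parallel to the scalar proofs, with modified constants accounting for the coupling.
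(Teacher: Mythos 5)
Your proof is correct and follows essentially the same route as the paper's: doubling the index $i$ in the maximization, observing that maximality over $i$ makes the exponential coupling difference $\mathcal{C}$ nonnegative, bounding the remaining coupling contribution by a universal multiple of $\mathcal{H}$ via the estimate $|\min\phi_1^\e-\min\phi_2^\e|\le 2\mathcal{H}$ (equivalently the paper's $m_i-m_{i+1}+\e m_i\le 3\mathcal{H}$), and then running the uniformly-elliptic and degenerate scalar arguments in the two respective cases. The only cosmetic difference is that you keep the $\e$-term and the min-difference term separate where the paper bundles them into a single $b_i$, which does not affect the argument.
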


\begin{proof}[Proof of Theorem~\ref{uni_gradsy}]
It is sufficient to see that the proof of Theorem~\ref{uni_grad}
can be extended to systems like~\eqref{AEsys}.
For $i=1,2,$ we define $H_{in}$ like
in~\eqref{H-approche}.
By Perron's method for systems, see~\cite{ik91b}
and Theorem~\ref{comp-discont}, there exists a
continuous viscosity solution $\phi_n$ of~\eqref{AEsys}
where the $H_{i}$ are replaced with the $H_{in}$'s.
It is now possible to apply
Lemma~\ref{uniform3} to $\phi_n$ to obtain a gradient bound which
is independent of $\e, n.$ We conclude as in the proof of
Theorem~\ref{uni_grad}.
\end{proof}

\begin{proof}[Proof of Lemma \ref{uniform3}]
The proof is almost the same as the one for scalar equations. The main change 
is the estimate~\eqref{estimate}.  So we only write some important steps.

\noindent
1. Set $m_i=\min_{\T^N}\phi_i,$ the new equation satisfied by $w_i$ is

\begin{eqnarray*}
-{\rm trace}(A_{i}(x) D^2w_i)+\underline{G}_{i}(x, w_i,Dw_i)
+\overline{G}_i(x, w_i,Dw_i)
+b_i(w_i)+1-e^{w_{i+1}-w_i}=0,
\end{eqnarray*}
where 
\begin{eqnarray*}
&&\underline{G}_{i}(x,w,p)=e^{-w}\underline{H}_{i}(x,e^w p )-|\sigma_{i} (x)^Tp|^2,
\quad \overline{G}_{i}(x,w,p)=e^{-w}\overline{H}_{i}(x,e^w p ),\\
&&b_i(w)=e^{-w}[m_i-m_{i+1}+\e(m_i-1)]+\e,~~\text{ where we identify $m_3=m_1$ and $w_3=w_1$}.
\end{eqnarray*}
Consider
\begin{eqnarray}\label{maxsys123}
\max_{\T^N\times \T^N,\, i=1,2}\{ w_i(x)-w_i(y)-\Psi(|x-y|)\},
\end{eqnarray}
where $\Psi$ is of the form in~\eqref{def-phi7} with $A_1,A_2$ to be chosen later. 
We are done if the maximum is nonpositive.

\noindent
2. Otherwise, the maximum is positive and hence  is achieved at 
$(\overline{x},\overline{y})$ with
$\overline{x} \neq \overline{y}$ for some $i\in \{1,2\}.$
So we can write the viscosity inequalities
for the $i$th equation  at this point. 
For every $\varrho>0,$ there exist 
$(p,X) \in \overline{J}^{2,+}w_i(\overline{x}),$ and
$(p,Y) \in \overline{J}^{2,-}w_i(\overline{y})$
such that~\eqref{mat}-\eqref{mat-bis}-\eqref{mat-ter} hold
and~\eqref{ineq3}  
is replaced by
\begin{eqnarray}\label{ineq-visc-638}
&& \underline{\mathcal{G}}_{i}
+ \overline{\mathcal{G}}_{ i} 
+ \mathcal{B} + \mathcal{C}\leq 0,
\end{eqnarray}
where
\begin{eqnarray*}
&& \underline{\mathcal{G}}_{i}=
-{\rm tr}(A_{i}(\overline{x})X-A_{i}(\overline{y})Y)+\underline{G}_{i}(\overline{x},w_i(\overline{x}),p)-\underline{G}_{ i}(\overline{y},w_i(\overline{y}),p),\\
&& \overline{\mathcal{G}}_{ i}=
\overline{G}_{i}(\overline{x},w_i(\overline{x}),p)-\overline{G}_{i}(\overline{y},w_i(\overline{y}),p),\\
&& \mathcal{B}= b_i(w_i(\overline{x}))-b_i(w_i(\overline{y})),\\
&& \mathcal{C}=
e^{w_{i+1}(\overline{y})-w_i(\overline{y})}-e^{w_{i+1}(\overline{x})-w_i(\overline{x})},~~\text{ where we identify $w_3=w_1$}.
\end{eqnarray*}

Since the maximum in~\eqref{maxsys123} is positive, we have $\mathcal{C}\geq 0.$
Next, we claim that
\begin{eqnarray}\label{estimate}
m_i-m_{i+1}+\e m_i \le 3\mathcal{H},
\end{eqnarray}
see Step 5 for its proof. It follows easily that
\begin{eqnarray*}
\mathcal{B}\ge-3\mathcal{H}.
\end{eqnarray*}

Since we have different assumptions for each equation, 
we distinguish two cases: "uniformly elliptic" (Step 3) and "degenerate elliptic" (step 4) to get a contradiction in~\eqref{ineq-visc-638}.

\noindent
3. When $i=1$, since the first equation is "uniformly elliptic", we use the arguments of Step~4
in the proof of Lemma~\ref{uniform2}. We obtain
\begin{eqnarray}\label{abcf}
&& \underline{\mathcal{G}}_{1}+\overline{\mathcal{G}}_{1}
+ \mathcal{B} + \mathcal{C}\\\nonumber
&\geq&
-4\nu\Psi''-(\tilde{C}+2\underline{C}+\overline{C} 
+2|\sigma_1|_\infty |(\sigma_1)_x|_\infty |\overline{x}-\overline{y}|\Psi')\Psi'
-2\underline{C}-3\mathcal{H}.
\end{eqnarray}
With a suitable choice of $A_1,A_2$ in the definition of $\Psi,$
there exists $r\geq |\overline{x}-\overline{y}|$
such that $|\overline{x}-\overline{y}|\Psi'\leq \Psi(r)=\mathop{\rm max}_{j} {\rm osc}(\phi_j),$
which is bounded by Lemma~\ref{uni_oscisy}. It is then possible to make
the right-hand side of~\eqref{abcf} positive. 

\noindent
4.  When $i=2$, since the second equation is "degenerate elliptic" so we control the $0$th order terms
using the superlinear Hamiltonian. 
We repeat readily the  arguments of the proof of Lemma~\ref{uniform2}
in Section~\ref{sec:bound-scalar-dege} (case of degenerate elliptic equations)
estimating $\overline{\mathcal{G}}_{2}$ using now~\eqref{BS222sysbis}. We omit the details.

Finally
Steps 3 and 4 lead to a contradiction in~\eqref{ineq-visc-638}.

\noindent
5. It remains to prove~\eqref{estimate}. Let $x_1$ such that $\phi_1(x_1)=\min_{\T^N}\phi_1.$
Using the first equation of~\eqref{AEsys}, we get
\begin{eqnarray*}
\min_{\T^N}\phi_2-\min_{\T^N}\phi_1 \le \phi_2(x_1)-\min_{\T^N}\phi_1\le \e \phi_1(x_1)
+H_{1} (x_1, 0 ) 
\le 2\mathcal{H}.
\end{eqnarray*}
Taking into account~~\eqref{eps-phi-borne}, we get the desired inequality.
\end{proof}

\subsection{Strong maximum principle for systems}
\label{sec:smp}

The following extension of the strong maximum principle 
to parabolic systems 
is a crucial ingredient in the proof of the large time 
behavior.

\begin{thm} \label{smp-sys}
Assume that, for $i=1,2,$ 
$A_{i}(x)\geq \nu_i(x)I$ with $\nu_i\in C(\T^N),$ $\nu_i\geq 0$
and
\begin{eqnarray}\label{partition-tore}
&& \text{for all } x\in\T^N, \  \sum_{i=1,2}\nu_i(x)>0.
\end{eqnarray}
If  
$u$ is a continuous viscosity subsolution of
\begin{eqnarray}\label{Cbis}
&& \left\{
\begin{array}{ll}
\displaystyle\frac{\partial u_1}{\partial t}
-{\rm trace}(A_1(x)D^2 u_1)
-C|Du_1|+u_1-u_2=0, & x\in\T^N\!\times\! (0,+\infty), 
\\[3mm]
\displaystyle\frac{\partial u_2}{\partial t}
-{\rm trace}(A_2(x)D^2 u_2)
-C|Du_2|+u_2-u_1=0,
\\[3mm]
u_i(x,0)=u_{0i}(x), & x\in \T^N,
\end{array}
\right. 
\end{eqnarray}
which attains a maximum at 
$(\overline{x},\overline{t})\in \T^N\times (0,+\infty),$
then $u$ is constant.
\end{thm}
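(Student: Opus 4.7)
The plan is to show that $M:=\max u$ equals $u_1(\overline{x},\overline{t})$ (after relabelling) and that both $u_1$ and $u_2$ are identically equal to $M$ on the parabolic past $\T^N\times [0,\overline{t}]$. The argument rests on three ingredients: a \emph{coupling trick} transferring the maximum between components via the subsolution inequality tested against the constant function $M$; the classical scalar parabolic strong minimum principle applied to whichever of the two equations is uniformly parabolic at the point under consideration; and the hypothesis~\eqref{partition-tore} ensuring that at every $x\in\T^N$ at least one such nondegenerate equation is available.

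I would first assume, up to relabelling, that $M=u_1(\overline{x},\overline{t})$. Since $u_1\le M$ everywhere, the constant test function $\varphi\equiv M$ touches $u_1$ from above at $(\overline{x},\overline{t})$, and the viscosity subsolution inequality for the first equation reduces at this point to $M-u_2(\overline{x},\overline{t})\le 0$, forcing $u_2(\overline{x},\overline{t})=M$. The same argument works at \emph{any} point where either component attains $M$, and the symmetric argument handles the other component, so
\[
K:=\{(x,t)\,:\,u_1(x,t)=M\}=\{(x,t)\,:\,u_2(x,t)=M\}
\]
is closed by continuity of $u$ and contains $(\overline{x},\overline{t})$.

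Next I would consider $v_i:=M-u_i\ge 0$, which is a continuous viscosity supersolution of
\[
\partial_t v_i-{\rm trace}(A_i(x)D^2 v_i)+C|Dv_i|+v_i-v_{i+1}=0,
\]
with the convention $v_3:=v_1$, and a fortiori of the scalar inequality $\partial_t v_i-{\rm trace}(A_i D^2 v_i)+C|Dv_i|+v_i\ge 0$ since $v_{i+1}\ge 0$. Fix $(x_0,t_0)\in K$ with $t_0>0$; by~\eqref{partition-tore} some $\nu_i(x_0)>0$, and by continuity $\nu_i\ge\nu_0>0$ on some ball $B_r(x_0)\subset \T^N$, on which the $i$th equation is uniformly parabolic. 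Since $v_i\ge 0$ is a viscosity supersolution of a uniformly parabolic equation with a Lipschitz first-order term and reaches the minimum value $0$ at the interior point $(x_0,t_0)$, the scalar parabolic strong minimum principle for viscosity solutions yields $v_i\equiv 0$ on $B_r(x_0)\times(0,t_0]$. By the coupling observation of the previous paragraph, the other component $v_{3-i}$ also vanishes on the same set, so $B_r(x_0)\times(0,t_0]\subset K$.

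To finish I would run a standard connectedness argument on the time slice $t=\overline{t}$: the set $S:=\{x\in\T^N:(x,\overline{t})\in K\}$ is nonempty (it contains $\overline{x}$), closed by continuity, and open by the previous paragraph, hence $S=\T^N$ by connectedness of $\T^N$. Applying the spatial propagation at every $x\in\T^N$ then shows $\T^N\times [0,\overline{t}]\subset K$, i.e.\ $u_1\equiv u_2\equiv M$ on $\T^N\times [0,\overline{t}]$. The main obstacle is the possible degeneracy of either one of the two diffusion matrices, which prevents a direct application of the scalar SMP to the corresponding component; the coupling trick circumvents it by transferring equality to the maximum between the two components \emph{at the same space--time point} without invoking any diffusion, so that one may always apply the scalar SMP to whichever equation happens to be nondegenerate at the point under consideration, the existence of such an equation being precisely what~\eqref{partition-tore} guarantees.
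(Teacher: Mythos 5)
Your proof is correct and follows essentially the same strategy as the paper's: the coupling trick (testing the component attaining the maximum against the constant $M$) to force equality of both components at that point, the local uniform parabolicity guaranteed by~\eqref{partition-tore} to invoke the scalar parabolic strong maximum principle, and a connectedness argument to propagate over all of $\T^N$. The only cosmetic differences are that you work with the supersolution $v_i=M-u_i$ instead of the subsolution $u_j-M$, you propagate via an open/closed argument on the time slice $t=\overline{t}$ where the paper chains over a finite subcover of $\T^N$, and you are a bit more careful in restricting the conclusion to the parabolic past $\T^N\times[0,\overline{t}]$ (the paper asserts $B_p\times[0,+\infty)$, which is what is actually used in the application to Theorem~\ref{LTBgen} since there the maximum is attained at every positive time, but is not what the backward-in-time parabolic SMP alone gives).
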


\begin{rem}\label{rem-smp} \ \\
(i) A new feature of the above result is that one only need a
partial nondegeneracy condition for the diffusion matrices $A_i$
in the following sense. At each point of $\T^N,$ there exists at
least one equation such that $A_i(x)$ is nondegenerate.
It can be interpreted using optimal control as follows. When 
considering the stochastic 
control problem associated with the equation in~\eqref{Cbis},
it means that the controlled process visits any open set of $\T^N$
almost surely for any open time interval (see Section~\ref{sec:control_provi} 
for the control interpretation).\\
(ii) This result contains, as a particular case, stationary systems.
\end{rem}

\begin{proof}[Proof of Theorem~\ref{smp-sys}] \ \\
\noindent 1. Suppose that
\begin{eqnarray}\label{max-total}
M:= \mathop{\rm sup}_{(x,t)\in\T^N\times [0,+\infty),\, j=1,2}
u_j(x,t) = u_i(\overline{x},\overline{t}), \quad \overline{t}>0.
\end{eqnarray}
We do a formal calculation. A rigorous one can be made using
classical viscosity techniques. At the point $(\overline{x},\overline{t}),$
we have $\frac{\partial u_i}{\partial t}=0,$ $Du_i=0$
and $-D^2 u_i\geq 0.$ From the $i$th equation, we obtain
\begin{eqnarray*}
u_i(\overline{x},\overline{t})-u_{i+1}(\overline{x},\overline{t}) \leq 0,~~\text{ where we identify $u_3=u_1$}.
\end{eqnarray*}
 Therefore $u_1(\overline{x},\overline{t})=u_2(\overline{x},\overline{t}).$
It follows that $(\overline{x},\overline{t})$ is the common maximum point of
$u_1, u_2$ in $\T^N\times [0,+\infty).$ 

\noindent 2. 
By~\eqref{partition-tore}, for all $x\in\T^N,$ there exists $j\in \{1,2\}$
such that $\nu_j(x)>0.$ By continuity, there exists $r_x>0$ such that
$\overline{B}(x,r_x)\subset \Omega_j:=\{x\in\T^N : \nu_j(x)>0\}.$
It follows
\begin{eqnarray*}
\T^N=\bigcup_{x\in\T^N} {B}(x,r_x)
\end{eqnarray*}
and, by compactness, there exists a finite covering
\begin{eqnarray}\label{new_partition}
\T^N=\bigcup_{p=1}^n B_p
\ \ \text{with, for all $p,$ $\overline{B}_p\subset \Omega_j$
for some $j\in\{1,2\}.$}
\end{eqnarray}
It follows that there exist $p,j$ such that 
$\overline{x}\in \overline{B}_p\subset \Omega_j.$
By continuity of $\nu_j$ and compactness, we have
\begin{eqnarray*}
&& \mathop{\rm inf}_{y\in \overline{B}_p, |\xi|=1}
\langle A_{j}(y)\xi,\xi \rangle
\geq \mathop{\rm inf}_{y\in \overline{B}_p} \nu_j(y)
=:\nu >0.
\end{eqnarray*}
Hence the $j$th equation is uniformly parabolic in $B_p.$
Moreover, the maximum in~\eqref{max-total} is also a maximum on
$B_p\times [0,+\infty).$
We set $u_k^M= u_k-M\leq 0$ for all $k.$ 
So we have
\begin{eqnarray*}
&& \frac{\partial u_{j}^M}{\partial t}
-{\rm trace}(A_j D^2 u_{j}^M)
-C|Du_{j}^M|+u_{j}^M\\
&\leq&
\frac{\partial u_{j}^M}{\partial t}
-{\rm trace}(A_{j} D^2 u_{j}^M)
-C|Du_{j}^M|+u_{j}^M-u_{j+1}^M \quad \text{identifying $u_3=u_1,$}\\
&=& \frac{\partial u_{j}}{\partial t}
-{\rm trace}(A_{j} D^2 u_{j})
-C|Du_{j}|+u_j-u_{j+1} \leq 0 \quad \text{in ${B}_p$.}
\end{eqnarray*}
By the strong maximum principle for viscosity solutions
of single parabolic equations
(see, e.g.,~\cite{dalio04}), we obtain $u_{j}^M\equiv 0$
in $B_p\times [0,+\infty).$
Coming back to Step 1, we infer that 
the maximum in~\eqref{max-total} is achieved for
$i=1,2$ at every $(y,t)\in B_p\times [0,+\infty).$ 

\noindent
3. By~\eqref{new_partition}, there exist $p'$ and $x'$ such that
$x'\in B_p\cap B_{p'}.$ It follows that the $u_j$'s
achieve their maximum over $B_{p'}\times [0,+\infty)$
at $(x',\overline{t}).$ Repeating Step 2, we conclude that
the $u_j$'s are constant in $B_{p'}\times [0,+\infty).$
From~\eqref{new_partition},
we conclude that $u_j\equiv M$ for all
$j$ and $(x,t)\in \T^N\times [0,+\infty).$
\end{proof}

\subsection{Ergodic problem}
The uniform gradient bound established for~\eqref{AEsys} allows us
to solve the ergodic problem.

The following assumption is used to ``linearize'' the system
in order to apply the strong maximum principle-Theorem~\ref{smp-sys}.
\begin{eqnarray}\label{hypA2bis}
&& \text{$H_{i}\in W_{\rm loc}^{1,\infty}(\T^N \times \R^N)$.}
\end{eqnarray}

\begin{thm}[Ergodic problem]\label{thm:ergod}
Suppose that the assumptions of Theorem~\ref{uni_gradsy} hold for~\eqref{AEsys}.
Then, there exists a solution $(c,v)\in \R^2\times W^{1,\infty}(\T^{N})^{2}$
of~\eqref{E}. 
The ergodic constant $c$ is unique and $c=(c_1 , c_1).$
If, in addition, the $H_{i}$'s 
satisfy~\eqref{hypA2bis}, then
$v$ is unique up to an additive constant vector.
\end{thm}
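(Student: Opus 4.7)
The plan is to implement the vanishing viscosity approach of Lions--Papanicolaou--Varadhan and Alvarez--Bardi, transposed to the coupled setting with the help of the uniform bounds of Theorem~\ref{uni_gradsy}. For existence, I start from~\eqref{AEsys}: by Theorem~\ref{uni_gradsy} the solution $\phi^\e=(\phi_1^\e,\phi_2^\e)$ is uniformly Lipschitz, and by Lemma~\ref{uni_oscisy} each component has uniformly bounded oscillation. Fixing $x_0\in\T^N$, I set $v_i^\e:=\phi_i^\e-\phi_1^\e(x_0)$. The oscillation bound, combined with the coupling inequality $|\min\phi_1^\e-\min\phi_2^\e|\le 3\mathcal{H}+|\e\min\phi_1^\e|$ obtained exactly as estimate~\eqref{estimate} in Lemma~\ref{uniform3}, yields a uniform $L^\infty$ bound on $v^\e$, and Arzel\`a--Ascoli delivers a subsequence along which $v^\e\to v\in W^{1,\infty}(\T^N)^2$ uniformly.

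To identify the ergodic constant, I use that $|\e\phi^\e|_\infty$ is bounded uniformly in $\e$ (see~\eqref{eps-phi-borne}) while $|\e\phi_i^\e(x)-\e\phi_i^\e(y)|\le\e\,\mathrm{osc}(\phi_i^\e)\to 0$; a further extraction gives uniform convergence $-\e\phi_i^\e\to c_i$ for some $c_i\in\R$. Since $|\phi_1^\e-\phi_2^\e|_\infty$ is bounded uniformly in $\e$ (again from Lemma~\ref{uni_oscisy} and~\eqref{estimate}), $|\e\phi_1^\e-\e\phi_2^\e|\to 0$, so $c_1=c_2=:\lambda$. Rewriting~\eqref{AEsys} as
\[
-\mathrm{trace}(A_i(x)D^2v_i^\e)+H_i(x,Dv_i^\e)+v_i^\e-v_{i+1}^\e=-\e\phi_i^\e,
\]
passing to the viscosity limit by stability then shows $(v,(\lambda,\lambda))$ solves~\eqref{E}.

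For uniqueness of $c$, given two ergodic pairs $(v,c)$ and $(v',c')$ of the form $c=(\lambda,\lambda)$ and $c'=(\lambda',\lambda')$ (any pair can be brought to this form by the natural shift $v_2\mapsto v_2+(c_1-c_2)/2$, which trades $(c_1,c_2)$ for $((c_1+c_2)/2,(c_1+c_2)/2)$), I use that the equality $c_1=c_2$ makes $u_i(x,t):=v_i(x)-\lambda t$ and $u'_i(x,t):=v'_i(x)-\lambda' t$ genuine solutions of the Cauchy system~\eqref{C}. Setting $M:=|v-v'|_\infty$, the function $u'+(M,M)$ is again a solution and dominates $u$ at $t=0$, so the comparison principle for weakly coupled parabolic systems recalled in the Appendix gives
\[
v_i(x)-\lambda t\le v'_i(x)+M-\lambda't\quad \text{on } \T^N\times[0,\infty),
\]
and letting $t\to+\infty$ forces $\lambda\le\lambda'$. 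By symmetry $\lambda=\lambda'$, which gives uniqueness.

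For uniqueness of $v$ under~\eqref{hypA2bis}, the idea is to linearize and invoke the strong maximum principle. Given two pairs $(\lambda,v),(\lambda,v')$, set $w_i=v_i-v'_i$; the local Lipschitz continuity of $H_i$ together with the gradient bound allows me to write $H_i(x,Dv_i)-H_i(x,Dv'_i)=b_i(x)\cdot Dw_i$ with $b_i\in L^\infty(\T^N;\R^N)$, so that $w$ is a viscosity solution of the linear coupled system
\[
-\mathrm{trace}(A_i(x)D^2w_i)+b_i(x)\cdot Dw_i+w_i-w_{i+1}=0,\quad x\in\T^N,\ i=1,2,
\]
which is a stationary instance of~\eqref{Cbis}. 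The partial nondegeneracy~\eqref{partition-tore} holds thanks to~\eqref{sig-deg} on $A_1$ imposed by~\eqref{BS222sys}. At $\bar x$ where $M:=\max_{i,x}w_i$ is attained, say on the $i$-th component, the viscosity inequality for $w_i$ at $\bar x$ yields $w_{i+1}(\bar x)\ge w_i(\bar x)=M$, so both components reach $M$ at the common point $\bar x$; Theorem~\ref{smp-sys} (stationary version, cf. Remark~\ref{rem-smp}(ii)) then forces $w_1\equiv w_2\equiv M$, and substituting back into the equation gives $w_1=w_2$, i.e.\ $v-v'=(s,s)$ for some $s\in\R$. The main obstacle I expect is precisely this last step: rigorously justifying the linearization and the use of the strong maximum principle in the viscosity framework, for which the Lipschitz regularity of $v,v'$ granted by Theorem~\ref{uni_gradsy} is essential.
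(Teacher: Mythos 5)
Your proof is correct and follows essentially the same vanishing-discount strategy as the paper: uniform gradient/oscillation bounds from Theorem~\ref{uni_gradsy} together with the coupling estimate of type~\eqref{estimate} give compactness, stability passes to the limit, and uniqueness is handled by the comparison principle (for $c$) and the strong maximum principle after linearization (for $v$). One small organizational improvement over the paper's own proof: by subtracting the \emph{common} constant $\phi_1^\ep(x_0)$ from both components you keep the coupling terms $v_1^\ep-v_2^\ep=\phi_1^\ep-\phi_2^\ep$ intact, so you avoid the bookkeeping with $\rho_i^\ep$ and the final shift $\tilde v_1=v_1+\rho_1$ that the paper needs. Two harmless imprecisions: in the uniqueness-of-$c$ step the inequality $v_i(x)-\lambda t\le v'_i(x)+M-\lambda' t$ gives $(\lambda'-\lambda)t$ bounded, hence $\lambda'\le\lambda$ (you wrote $\lambda\le\lambda'$; symmetry fixes it either way); and in the linearization step $w=v-v'$ should be claimed as a viscosity \emph{subsolution} of the stationary version of~\eqref{Cbis} (not a solution of an exactly linear system, since $v,v'$ need not be differentiable), which is exactly what Theorem~\ref{smp-sys} requires.
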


For $m=1,$ we find the classical results for scalar equations
(see \cite{lpv86, bs01}).

\begin{proof}[Proof of Theorem {\rm \ref{thm:ergod}}]
Let $\phi^\ep$ be the continuous solution of~\eqref{AEsys}
given by Theorem~\ref{uni_gradsy}. 
We first claim that there exists a constant $C'$ independent
of $\ep$ such that
\begin{eqnarray}\label{estimation-somme}
\left|\phi_i^\ep(x)-\phi_{i+1}^\ep(x)\right| \leq C', \quad x\in\T^N, i=1,2~\text{and we identify $\phi_3^\ep=\phi_1^\ep$.}
\end{eqnarray}
Indeed, choosing $\overline{x}_i$ as the maximum point of $\phi_i^\ep$, 
using the $i$th equation and~\eqref{eps-phi-borne},
we have
\begin{eqnarray*}
|\max\phi_1^\ep-\max\phi_2^\ep|
\leq
C.
\end{eqnarray*}
From Theorem \ref{uni_gradsy},
we obtain
\begin{eqnarray*}
&&\phi_i^\ep(x)-\phi_{i+1}^\ep(x)
= [\max\phi_i-\max\phi_{i+1}]
+[\phi_i^\ep(x)-\phi_i^\ep(\overline{x}_i)]+[\phi_{i+1}^\ep(\overline{x}_{i+1})-\phi_{i+1}^\ep(x)]\\
&\leq&
C+ {\rm osc}(\phi_1)+ {\rm osc}(\phi_2).
\end{eqnarray*}
The lower bound is established in the same way by introducing
$\underline{x}_i$ as the minimum point of~$\phi_i$.

Fix $x^*\in \T^N$ and
set $\rho_i^\ep := \phi_i^\ep(x^*)-\phi_{i+1}^\ep(x^*)$
and $v_i^\ep(x):= \phi_i^\ep(x)-\phi_i^\ep(x^*)$
for all $i=1,2.$
From Theorem \ref{uni_gradsy},~\eqref{eps-phi-borne} 
and~\eqref{estimation-somme}, using 
Ascoli-Arzela's theorem, there exists a sequence $\ep_k \to 0$ so that
\begin{eqnarray*}
&& v_{i}^{\ep_k}\to v_{i}, \qquad
\ep_k \phi_i^{\ep_k}\to -c_i,  \qquad
\rho_i^{\ep_k} \to \rho_i,
\end{eqnarray*}
uniformly on $\T^{N}$ 
as $k\to\infty$
for some $v_{i}\in W^{1,\infty}(\T^{N}),$
$c_i, \rho_i\in\R.$ Note that $c_i$ is a constant independent
of $x^*$ while $v_{i}, \rho_i$ depend on $x^*$.

Now, we rewrite~\eqref{AEsys} as
\begin{eqnarray}\label{nouveau-sys}
&& \left\{
\begin{array}{l}
\e\phi_1^\e-{\rm trace}(A_{1}(x) D^2v_1^\e)
+H_1(x,Dv_1^\e )
+v_1^\e-v_2^\e+\rho_1^\ep=0,\\[2mm]
\e\phi_2^\e-{\rm trace}(A_{2}(x) D^2v_2^\e)
+H_2(x,Dv_2^\e )
+v_2^\e-v_1^\e+\rho_2^\ep=0.
\end{array}
\right.
\end{eqnarray}
 Passing to the limit along the subsequence $\ep_k,$ by stability, we
obtain that $v=(v_1,v_2)\in W^{1,\infty}(\T^{N})^2$
is solution of
\begin{eqnarray*}
&& \left\{
\begin{array}{l}
-{\rm trace}(A_{1}(x) D^2v_1)+H_{1}(x,Dv_1 )
+v_1-v_2+\rho_1= c_1,\\[2mm]
-{\rm trace}(A_{2}(x) D^2v_2)+H_{2}(x,Dv_2 )
+v_2-v_1+\rho_2= c_2.
\end{array}
\right.
\end{eqnarray*}

Multiplying~\eqref{nouveau-sys} by $\ep$ and  passing to the limit along 
the subsequence $\ep_k,$ we get $c_1=c_2$. Note that $\rho_1^\ep+\rho_2^\ep=0$, so $\rho_1=-\rho_2$.
We conclude by setting $\tilde{v}_1(x)=v_1(x)+\rho_1,\tilde{v}_2=v_2(x)$.

We finally mention that we can easily see the uniqueness of the ergodic constant 
by the comparison principle for~\eqref{C} (Theorem~\ref{comp-cauchy-lip}).
We claim that the uniqueness up to constant of solutions comes
from the strong maximum principle (Theorem~\ref{smp-sys}).
Indeed, let $v, \tilde{v}$ be Lipschitz continuous solutions of~\eqref{E}
(the constant $c$ is the same by the above). Since $|Dv|_{\infty}, 
|D\tilde{v}|_{\infty}\leq K$ for some $K$ and 
$H_{i}\in W^{1,\infty}(\overline{B}(0,K))$, classical arguments
in viscosity solutions imply that $v-\tilde{v}$ is a viscosity subsolution
of the stationary version of~\eqref{Cbis} in $\T^N.$ Therefore
$v=\tilde{v}+C$ where $C\in \R^m$ is a constant.
\end{proof}

\subsection{Large time behavior result}


We first give a general result and then apply it to asymmetric systems.

\begin{thm}(Large time behavior)\label{LTBgen}
Suppose that~ \eqref{partition-tore}, \eqref{hypA2bis} hold. 
Suppose there exists a viscosity solution
$u\in C(\T^N\times [0,+\infty))^2$ of~\eqref{C}, a solution
$((c_1,c_1),v) \in \R^2 \times  W^{1,\infty}(\T^N)^2$ of the ergodic problem~\eqref{E}
and $K>0$ such that
\begin{eqnarray*}
|Du_i(\cdot,t)|_\infty,  \leq K,
\quad t\geq 0, i=1,2.
\end{eqnarray*}
Then, there exists a constant $\ell\in\R$ such that
$$
u(x,t)+(c_1,c_1)t+(\ell,\ell) \to v(x) \quad uniformly \ as \ t\to +\infty.
$$
\end{thm}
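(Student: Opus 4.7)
\emph{Plan of proof.} The strategy adapts the Barles-Souganidis scheme to systems. Set $U(x,t):=u(x,t)+(c_1,c_1)t$; then $U$ is a continuous viscosity solution of~\eqref{C} with right-hand side $(c_1,c_1)$. Adding a constant vector $(C,C)$ to $v$ does not affect~\eqref{E} (the coupling absorbs the constant), so for every $C\in\R$, $x\mapsto v(x)+(C,C)$ is a time-independent solution of the same shifted system. Choosing $C$ large enough that $v-(C,C)\leq u_0\leq v+(C,C)$ and applying the comparison principle for~\eqref{C} (Theorem~\ref{comp-cauchy-lip}), I obtain $|U_i(x,t)-v_i(x)|\leq C_0$ uniformly in $(x,t,i)$.

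I then introduce
\[
m(t):=\max_{i,x}(U_i(x,t)-v_i(x)), \qquad M(t):=\min_{i,x}(U_i(x,t)-v_i(x)).
\]
Using $v+(m(t_0),m(t_0))$ as a stationary supersolution on $[t_0,+\infty)$ and comparing with $U$, the function $m(\cdot)$ is nonincreasing; symmetrically $M(\cdot)$ is nondecreasing. Hence $m(t)\to m_\infty$ and $M(t)\to M_\infty$ with $M_\infty\leq m_\infty$. Setting $\ell:=-m_\infty$, the conclusion of the theorem will follow once I prove $M_\infty=m_\infty$, because the squeeze $M(t)\leq U_i(x,t)-v_i(x)\leq m(t)$ then forces uniform convergence of $u(x,t)+(c_1,c_1)t+(\ell,\ell)$ to $v(x)$.

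To prove $M_\infty=m_\infty$, I extract a subsequential limit of the time shifts $U(\cdot,\cdot+t_n)$. This family is equi-Lipschitz in $x$ by hypothesis, uniformly bounded modulo $v$ by the first step, and has a uniform modulus of continuity in $t$ coming from the equation together with the gradient and oscillation bounds. By Ascoli-Arzel\`a and diagonal extraction, along some $t_n\to\infty$ the shifts converge locally uniformly on $\T^N\times\R$ to a continuous function $U^\infty$ which, by stability of viscosity solutions, is itself a solution of the shifted parabolic system on all of $\T^N\times\R$. By construction and the monotonicity of $m,M$,
\[
\max_{i,x}(U_i^\infty(x,s)-v_i(x))\equiv m_\infty, \qquad \min_{i,x}(U_i^\infty(x,s)-v_i(x))\equiv M_\infty, \qquad s\in\R.
\]

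Finally, set $\delta_i(x,t):=v_i(x)+m_\infty-U_i^\infty(x,t)\geq 0$. Subtracting (in the viscosity sense, through the standard doubling-of-variables lemma) the stationary equation for $v+(m_\infty,m_\infty)$ from the parabolic equation for $U^\infty$, and using $|H_i(x,Dv_i)-H_i(x,DU_i^\infty)|\leq C|D\delta_i|$ which follows from~\eqref{hypA2bis} together with the uniform bounds $|Dv_i|_\infty,|DU_i^\infty|_\infty\leq K$, I conclude that $-\delta$ is a continuous viscosity subsolution of the system~\eqref{Cbis} (with $C$ a Lipschitz constant of the $H_i$'s on $\overline{B}(0,K)$). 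Since $\min_{i,x}\delta_i(\cdot,t)=0$ for every $t\in\R$, the function $-\delta$ attains its maximum $0$ at some interior point $(x_*,t_*)\in\T^N\times(0,+\infty)$; by~\eqref{partition-tore} one may then invoke the strong maximum principle for systems (Theorem~\ref{smp-sys}), which forces $-\delta\equiv 0$, i.e., $U^\infty\equiv v+(m_\infty,m_\infty)$. In particular $M_\infty=m_\infty$, which closes the argument. The main technical difficulty is the linearization step turning the pair of viscosity (in)equalities for $U^\infty$ and $v+(m_\infty,m_\infty)$ into an inequality of the form~\eqref{Cbis}: this uses crucially the local Lipschitz regularity~\eqref{hypA2bis} of the Hamiltonians and the uniform gradient bound provided by Theorem~\ref{uni_gradsy}, precisely where the asymmetric nature of the system is felt.
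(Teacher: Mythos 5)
Your proof is correct and follows essentially the same strategy as the paper: uniform boundedness by comparison with $v\pm(C,C)$, monotonicity of the extremal gap, extraction of a subsequential limit via Ascoli--Arzel\`a, and the strong maximum principle (Theorem~\ref{smp-sys}) applied after linearizing the Hamiltonian difference using~\eqref{hypA2bis} and the $K$-Lipschitz bounds. The one genuine variation is at the end: you additionally track the lower envelope $M(t)$, show it is nondecreasing, and close with the sandwich $M(t)\le U_i(\cdot,t)-v_i\le m(t)$ once the SMP forces the subsequential limit to be constant and hence $M_\infty=m_\infty$; the paper instead shows directly that the time shifts $u(\cdot,\cdot+t_p)$ form a Cauchy sequence by applying the $L^\infty$-contraction property of Theorem~\ref{comp-cauchy-lip}, which also disposes of the time-equicontinuity you assert but do not prove (your ``uniform modulus of continuity in $t$'' is in fact most efficiently obtained from exactly that contraction estimate applied between $u(\cdot,t+h)$ and $u(\cdot,t)$, rather than ``from the equation'' directly). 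Either route is standard and both rest on the same two pillars established earlier in the paper, namely the uniform gradient bound (Theorem~\ref{uni_gradsy}) and the strong maximum principle for the degenerate asymmetric system; your sandwich device merely trades the paper's uniqueness-of-subsequential-limit step for the monotonicity of $M$.
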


\begin{proof}[Proof of Theorem \ref{LTBgen}] 
Several parts of this proof are inspired by~\cite{bs01}.

\noindent 1.
Since $|Du_i(\cdot,t)|_\infty\leq K,$ we deduce 
from Theorem~\ref{comp-cauchy-lip} that $u$
is the unique continuous viscosity solution of~\eqref{C}. 

Defining $c=(c_1,c_1),$
we see that $v^\pm (x,t)=v(x)-ct\pm(C,C)$
is a super and subsolution, respectively, of~\eqref{C} 
for $C$ large enough.
Thanks to the comparison principle, we have
\begin{eqnarray*}
v_i(x)-C\leq u_i(x,t)+ct\leq v_i(x)+C, \quad x\in\T^N, i=1,2.
\end{eqnarray*}
It follows that $u_i(x,t)+ct$ is bounded; by the change of function
$u\to u+ct,$
we may assume without loss of generality that $u$ is bounded  and
the ergodic constant $c$ is 0. 

Set $m(t)=\max_{x\in\T^N,\, i=1,2}\{u_i(x,t)-v_i(x)\}.$ The comparison principle claims that $m$ is 
nonincreasing and hence, $m(t)\to \ell \text{ as } t \to \infty.$

\noindent 2.
A by-product of Step 1 
is that
$\{u(\cdot,t), t>0\}$ is relatively compact
in $W^{1,\infty}(\T^N)^2.$
So we can extract a sequence,
$t_j\to +\infty$ such that $u(\cdot,t_j)\to  \overline{u}\in W^{1,\infty}(\T^N)^2.$ 
Applying Theorem~\ref{comp-cauchy-lip} for~\eqref{C}, we obtain
\begin{eqnarray*}
&&|u_i(x,t+t_p)-u_i(x,t+t_q)|\leq 
\max_{y\in\T^N,\, j=1,2} |u_j(y,t_p)-u_j(y,t_q)|,
\quad x\in\T^N, \  t\geq 0, \ p,q\in\N,
\end{eqnarray*}
which proves that $(u(\cdot ,\cdot+t_p))_p$ is a Cauchy
sequence in $C(\T^N\times [0,+\infty))^2.$ We call $u_\infty$
its limit. Notice, on the one hand, that  
$|Du_{\infty i}(\cdot,t)|_\infty\leq K$ for all $i,t$
and, on the other hand, by stability, $u_\infty$ is solution of~\eqref{C}
with initial data $\overline{u}.$

\noindent 3. Using the uniform convergence of $u_i(.,t+t_j),$
we pass to the limit with respect to $j$ in 
\begin{eqnarray*}
m(t+t_j)=\max_{i,x}(u_i(x,t+t_j)-v_i(x))
\end{eqnarray*}
to obtain $\ell=\max_{i,x}(u_{\infty i}(x,t)-v_i(x))$ for any $t>0$.

\noindent 4. Since  $u_{\infty}$ is solution of~\eqref{C}
and $v$ is solution of~\eqref{E} and, up to increase $K,$ both are $K$-Lipschitz continuous
in $x,$ thanks to the Lipschitz continuity of $H_{i}$ with respect
to $p,$ see~\eqref{hypA2bis}, we obtain that $u_\infty -v$ is subsolution of~\eqref{Cbis}.
The strong maximum principle (Theorem~\ref{smp-sys}), then
implies 
\begin{eqnarray*}
\ell=u_{\infty i}(x,t)-v_{i}(x) \quad
(x,t) \in \T^N \times [0,+\infty), i=1,2.
\end{eqnarray*}
Noticing that $(\ell,\ell)+v(x)$ does not depend on the choice 
of subsequences, we obtain
$u_{i}(x,t)-\ell-v_{i}(x)\to 0$ uniformly in $x$ as $t \to \infty$, for $i=1,2$.
\end{proof}

We apply the previous result for the particular systems 
we studied in Section~\ref{assym-sys}.

\begin{cor}(Large time behavior)\label{LTB_provi}
Assume~\eqref{partition-tore}, \eqref{hypA2bis} and suppose that the assumptions
of Theorem~\ref{uni_gradsy} are in force. Then, for any initial condition
$u_0\in W^{1,\infty} (\T^N )^2,$

\begin{itemize}

\item[(i)]
The system~\eqref{C} has a unique viscosity 
solution $u\in C(\T^N\times [0,+\infty))^2$
and there exists $K>0$  such that
$|Du_i(\cdot,t)|_\infty\leq K$ for all $t\geq 0, i=1,2.$

\item[(ii)]
There exist a unique ergodic constant $c=(c_1,c_1)\in\R$
and $v\in W^{1,\infty}(\T^N)^2$ solution of~\eqref{E} 
such that $u(x,t)+ct \to v(x)$ uniformly as $t\to +\infty.$
\end{itemize}
\end{cor}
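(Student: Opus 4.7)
The proof combines (i)~a construction-and-gradient-bound argument for the Cauchy problem with (ii)~a direct application of Theorem~\ref{LTBgen}. For existence in (i), I would mimic the strategy of Theorem~\ref{uni_gradsy}: replace each $H_i$ by its truncation $H_{in}$ of~\eqref{H-approche}, apply Perron's method for monotone parabolic systems as in \cite{ik91b}, and conclude continuity of the resulting solution $u_n$ via the strong comparison principle Theorem~\ref{comp-discont} for the truncated Cauchy problem. Uniqueness of the limit solution of~\eqref{C} in the class of continuous (a fortiori Lipschitz) functions will come from Theorem~\ref{comp-cauchy-lip}.

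The main obstacle is the uniform Lipschitz bound $|Du_{in}(\cdot,t)|_\infty\le K$ independent of both $n$ and $t\ge 0$. My plan is to adapt the proof of Lemma~\ref{uniform3} to the parabolic setting. First, the ergodic pair $(c,v)$ from Theorem~\ref{thm:ergod} provides an a priori $L^\infty$ bound: for $M$ sufficiently large, $v(x)-ct\pm (M,M)$ are a super- and a subsolution of~\eqref{C} dominating the initial data $u_0$, so by comparison $|u_{in}(x,t)+ct|$ is bounded uniformly in $n,t$. Performing the exponential change
\begin{eqnarray*}
\exp(w_{in}(x,t)) = u_{in}(x,t)+c_1 t-\min_{(y,s)\in \T^N\times [0,\infty)}(u_{in}(y,s)+c_1 s)+1
\end{eqnarray*}
yields a parabolic system analogous to~\eqref{new-eq} with an additional $\partial_t w_{in}$ term. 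The spatial doubling of Lemma~\ref{uniform3} is then complemented by a quadratic time penalization $(t-s)^2/(2\eta)$, so that the parabolic Crandall-Ishii lemma applies; at any interior maximum the two time-derivative contributions appear with opposite signs and cancel. The residual inequality is precisely the one treated in Step~4 of Lemma~\ref{uniform3}: it is closed using the trace bound from Lemma~\ref{diff-tracet}, the sublinear estimate from~\eqref{BS222sys} on the first equation and the superlinear estimate from~\eqref{BS222sysbis} on the second, with $\mathcal{H}$ of~\eqref{defcalH} playing the role of $\mathcal{H}_0$ and the bound~\eqref{eps-phi-borne} replaced by the $L^\infty$-control just obtained from the ergodic solution. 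Choosing $A_1,A_2$ in~\eqref{def-phi7} as in~\eqref{choix-cstes1} forces a contradiction. Letting successively $\eta\to 0$ and $n\to\infty$, viscosity stability and the Ascoli-Arzela theorem produce the Lipschitz viscosity solution $u$ of~\eqref{C} satisfying $|Du_i(\cdot,t)|_\infty\le K$ uniformly in $t$, which is the content of (i).

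For (ii), one applies Theorem~\ref{LTBgen}: the hypotheses~\eqref{partition-tore} and~\eqref{hypA2bis} are assumed, part~(i) furnishes a solution $u$ with the required uniform spatial Lipschitz bound, and Theorem~\ref{thm:ergod} provides an ergodic pair $((c_1,c_1),v)\in \R^2\times W^{1,\infty}(\T^N)^2$. Theorem~\ref{LTBgen} then gives $u(x,t)+(c_1,c_1)t+(\ell,\ell)\to v(x)$ uniformly on $\T^N$, and absorbing the constant $(\ell,\ell)$ into the representative $v$ (unique up to an additive constant vector by Theorem~\ref{thm:ergod}) yields the announced convergence. The principal technical difficulty is concentrated in the parabolic adaptation of Lemma~\ref{uniform3} described above: correctly setting up the time penalization so that the maximum is attained in the interior of $\T^N\times[0,T]\times\T^N\times[0,T]$, and replacing~\eqref{eps-phi-borne} by the ergodic a priori bound; once this is done, the structural estimates from the stationary case transfer verbatim.
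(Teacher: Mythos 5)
Your proposal is correct and follows essentially the same route as the paper: truncate the Hamiltonians via~\eqref{H-approche-bis}, build $u_n$ by Perron's method and the discontinuous comparison principle, derive the uniform $L^\infty$-in-time bound by sandwiching with (the appropriate) ergodic solution, obtain the uniform gradient bound by adapting Lemma~\ref{uniform3} to the parabolic setting, pass to the limit by Ascoli--Arzel\`a and stability, and conclude (ii) by Theorems~\ref{thm:ergod} and~\ref{LTBgen}.

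The only noteworthy technical divergences are the following, and none of them affects the validity of the argument. First, in Step~4 the paper performs \emph{same-time} spatial doubling, i.e.\ it maximizes $w_{ni}(x,t)-w_{ni}(y,t)-\psi(|x-y|)$ and invokes the parabolic Crandall--Ishii lemma with $a-b=0$; you propose doubling in time as well with a penalty $(t-s)^2/(2\eta)$, which leads to the same cancellation of the time-derivative terms (both equal $(\bar t-\bar s)/\eta$). Second, your exponential normalization subtracts a \emph{constant} (the global space-time infimum of $u_{in}+c_1 t$), so $w_{in}$ satisfies a bona fide PDE with a harmless extra $-c_1 e^{-w_{in}}$ term; the paper instead subtracts the time-dependent spatial minimum $\min_{\T^N}u_{ni}(\cdot,t)$, keeping the equation closer to the stationary~\eqref{new-eq}. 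Third, you invoke Theorem~\ref{thm:ergod} for the original Hamiltonians $(c,v)$ to obtain the a priori $L^\infty$ bound; the paper more carefully uses the truncated ergodic pair $(c_n,v_n)$, though your choice is equivalent once $n\ge |Dv|_\infty$. Finally, for the maximum to be interior in time you should use, as usual, that $u_0\in W^{1,\infty}$ (so for a suitable choice of $A_1$ the maximum is nonpositive at $t=0$); you flag this but do not close it explicitly. These are all cosmetic variants of the paper's argument.
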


\begin{rem}
Remember that the classical assumptions on the Hamiltonians in~\eqref{C}
do not hold in order to have comparison and uniqueness of viscosity
solutions. It is why, we prove first the existence of a 
(Lipschitz) continuous in space viscosity solution of~\eqref{C}. To this
purpose, we need the initial condition to be Lipschitz continuous 
though this regularity assumption is not necessary to obtain the long time
behavior. 
\end{rem}

\begin{proof}[Proof of Corollary~\ref{LTB_provi}] \ \\
1. We introduce the truncated evolutive system~\eqref{C}
in $\T^N\times [0,+\infty)$ with the $H_{in}$ defined by~\eqref{H-approche-bis}.
From Theorem~\ref{comp-cauchy-lip} and Perron's method, there exists a
unique continuous viscosity solution
$u_n$ in $\T^N\times [0,+\infty).$

In the following, $M$ is a constant which may vary line to line but is independent
of $n.$\\
2. Applying Theorem~\ref{thm:ergod} with the $H_{in}$'s, we obtain the
existence of $(c_n,v_n)\in  \R^2\times W^{1,\infty}(\T^N)^2.$ 
Since the $H_{in}$'s satisfy~\eqref{BS222sys}-\eqref{BS222sysbis} with constants independent
of $n$ for large $n,$ and $c_n\to c$ when $n\to +\infty,$ we obtain 
from Theorem~\ref{uni_gradsy} that $|v_n|_\infty, |Dv_n|_\infty \leq M.$\\
3. Noticing that $v_n(x)-c_n t$ satisfies~\eqref{C} with the $H_{in}$'s, 
by Theorem~\ref{comp-cauchy-lip}, we obtain
\begin{eqnarray*}
v_{ni}(x) -M \leq  u_{ni}(x,t)+c_{ni} t\leq v_{ni}(x) +M,
\quad  (x,t)\in \T^N\times [0,+\infty), i=1,2.
\end{eqnarray*}
Therefore $|u_{n}(\cdot,t)+c_{n} t|_\infty, {\rm osc}(u_n(\cdot,t))\leq M.$
\\
4. We claim that $|Du_n (\cdot,t)|_\infty \leq M.$ To prove this fact,
we repeat the proof of Lemma~\ref{uniform3} for the evolutive
system~\eqref{C} with the $H_{in}$'s. Let $T>0$ 
and consider 
\begin{eqnarray*}
\max_{(x,y)\in(\T^N)^2, t\in [0,T], 1\le i \le m}\{ w_{ni}(x,t)-w_{ni}(y,t)-\psi(|x-y|)\},
\end{eqnarray*}
where
$e^{w_{ni}(x,t)}= u_{ni}(x,t)- \mathop{\rm min}_{\T^N} u_{ni}(\cdot,t) +1,$
and $\psi$ is given by~\eqref{def-phi7}.
If the maximum is nonpositive we are done. Otherwise 
it is positive and achieved at some $(\overline{x},\overline{y},\overline{t},i)$
with $\overline{x}\not=\overline{y}$ and we can
write the viscosity inequalities for the $i$th equation, see~\cite[Theorem 8.3]{cil92}:
For every $\varrho>0,$ there exist 
$(a,p,X) \in \overline{P}^{2,+}w_{ni}(\overline{x},\overline{t})$ and
$(b,p,Y) \in \overline{P}^{2,-}w_{ni}(\overline{y},\overline{t})$
such that~\eqref{visco-ineq185} holds since $a-b=0.$
We achieve a contradiction as
in the proof of Lemma~\ref{uniform3}.\\
\noindent
5. From Ascoli-Arzela's theorem and
the stability result for systems, by letting $n\to +\infty,$
we obtain a continuous viscosity $u$ solution of~\eqref{C}
in $\T^N\times [0,+\infty),$
with $|Du(\cdot,t)|_\infty\leq M.$ 
This solution is unique thanks to Theorem~\ref{comp-cauchy-lip}.
It ends the proof of (i).
The proof of (ii) is an immediate consequence of Theorems~\ref{thm:ergod} 
and~\ref{LTBgen}.
\end{proof}

\subsection{Examples and extensions}
\label{sec:exple-sys}

We give some examples such that our convergence result, 
Corollary~\ref{LTB_provi}, holds.

\begin{ex} (Systems with possibly degenerate equations and superlinear Hamiltonians)
Consider  
\begin{eqnarray*}
&& \left\{
\begin{array}{ll}
\displaystyle \frac{\partial u_1}{\partial t}- d_1(x)\Delta u_1 + a_1(x)|Du_1|^{1+\alpha_1}
+u_1 -u_2 =f_1(x), &
x\in\T^N\times (0,+\infty), 
\\[3mm]
\displaystyle \frac{\partial u_2}{\partial t}- d_2 (x)\Delta u_2 + a_2(x)|Du_2|^{1+\alpha_2} + u_2 -u_1 =f_2(x), &
x\in\T^N\times (0,+\infty), 
\\[3mm]
u_1(x,0)=u_{01}(x), \  u_2(x,0)=u_{02}(x), & x\in \T^N,
\end{array}
\right. 
\end{eqnarray*}
where $d_i\ge 0, a_i, f_i, u_{0i}$ are Lipschitz continuous, 
${\alpha_i}>0$ and $a_i>0.$ Suppose moreover that for any $x \in \T^N$, we have either $d_1(x)>0$ or $d_2(x)>0$ (this implies~\eqref{partition-tore}).   So,
the hypotheses of Corollary~\ref{LTB_provi} hold. 
\end{ex}

When there is at least one sublinear Hamiltonian, we need that the diffusion matrix of 
the corresponding equation is uniformly elliptic. But we 
permit other diffusion matrices to be degenerate everywhere.

\begin{ex} (Asymmetric systems with degenerate equations)
In the following system, the first equation is of sublinear type while the second is of superlinear type,
\begin{eqnarray}\label{sys-sous-sur124}
&& \left\{
\begin{array}{l}
\displaystyle \frac{\partial u_1}{\partial t}-{\rm trace}(A_1 D^2u_1) + \mathop{\rm sup}_{\theta\in\Theta}
\{ -\langle b_{\theta 1}(x) , Du_1\rangle -f_{\theta 1}(x)\} 
+u_1 -u_2 =f_1(x), 
\\[3mm]
\displaystyle \frac{\partial u_2}{\partial t}-{\rm trace}(A_2 D^2u_2)+a_2(x)|Du_2|^{1+\alpha} + u_2 -u_1 =f_2(x), \ \
x\in\T^N\times (0,+\infty), 
\\[3mm]
u_1(x,0)=u_{01}(x), \  u_2(x,0)=u_{02}(x), \ \ x\in \T^N,
\end{array}
\right. 
\end{eqnarray}
where the functions $b_{\theta 1}, f_{\theta 1}, a_2, f_2, u_{01}, u_{02}$ satisfy
\begin{eqnarray}\label{regb123}
&& |f(x)|\leq C, \quad
|f(x)-f(y)|\leq C|x-y|,\qquad x,y\in\T^N,\theta\in\Theta
\end{eqnarray}
and $\alpha>0$ and $a_2(x)>0.$
If we suppose moreover that $A_1>0$ in $\T^N$ (notice that we only assume $A_2 \ge 0$), 
then the hypotheses of Corollary~\ref{LTB_provi} hold. 
\end{ex}

We end with some possible extensions to fully nonlinear systems with $m\geq 2$
equations of the form~\eqref{sys-general} with 
$H_{\theta i}=\underline{H}_{\theta i}+\overline{H}_{\theta i}.$ We assume that
the system is monotone and irreducible, i.e., the coupling matrix
$D=(d_{ij})_{1\leq i,j\leq m}$ satisfies
\begin{eqnarray*}
&&  d_{ii}\ge 0, \quad d_{ij}\leq 0 \ {\rm for} \ i\not= j\quad 
{\rm and} \quad \sum_{j=1}^{m}d_{ij}= 0 \ {\rm for \ all  \ } i,\\
&&\text{for all subset  $\mathcal{I}\varsubsetneq \{1,\cdots ,m\}$, 
there exists $i\in \mathcal{I}$ and  $j\not\in  \mathcal{I}$ such that $d_{ij}\not= 0$.} \nonumber
\end{eqnarray*}
In this case, it is possible to find $\Lambda\in\R^m$
with positive components 
$\Lambda_i >0$ such that $D^T\Lambda=0$ (\cite{clln12}) and to define
$\lambda_D:= \mathop{\rm max}_{1\leq i\leq m}\frac{1}{\Lambda_i}
\sum_{j\not= i}\Lambda_j.$
Then it is possible to generalize the previous results:
We assume that $A_{\theta i}$ satisfies~\eqref{hypsigmatheta}
and $\underline{H}_{\theta i} ,\overline{H}_{\theta i}$
satisfy~\eqref{hypHtheta}.

\begin{itemize}

\item Theorem~\ref{uni_gradsy} holds if, for each $1\leq i\leq m,$ the $i$th-equation
satisfies, uniformly with respect to $\theta,$ 
either~\eqref{BS222sys} or~\eqref{BS222sysbis} with $|\overline{H}(\cdot,0)|$
replaced by $(2\lambda_D +1)\mathcal{H}$ where $\mathcal{H}$ is defined in~\eqref{defcalH}
with a supremum over all $1\leq j\leq m,$ $\theta\in\Theta.$ Notice that, when $m=1$ (resp. $m=2$),
$\lambda_D=0$ (resp. $\lambda_D=1$) and
we recover exactly Theorem~\ref{uni_grad_dege} (resp. Theorem~\ref{uni_gradsy}).

\item Theorem~\ref{thm:ergod} holds under the assumptions above
and~\eqref{hypA2bis} uniformly with respect to~$\theta.$

\item Corollary~\ref{LTB_provi} holds under the assumptions above
and~\eqref{partition-tore} with $A_{\theta i}\geq \nu_i I.$

\end{itemize}

\section{Appendix}

\subsection{Proof of Lemma~\ref{diff-tracet}}

From~\eqref{mat}, for every $\zeta, \xi\in\R^N,$ we have
\begin{eqnarray*}
\langle X\zeta,\zeta \rangle - \langle Y\xi,\xi\rangle
\leq \Psi'\langle \zeta-\xi,B(\zeta-\xi) \rangle 
+ \Psi'' \langle \zeta-\xi,(q\otimes q)(\zeta -\xi)\rangle
+O(\varrho).
\end{eqnarray*}
We estimate ${\rm trace}(A(\overline{x})X)$
and ${\rm trace}(A(\overline{y})Y)$ using
two orthonormal bases $(e_1,\cdots ,e_N)$ and
$(\tilde{e}_1,\cdots ,\tilde{e}_N)$ in the following way:
\begin{eqnarray}\label{estimT}
T:= {\rm trace}(A(\overline{x})X
-A(\overline{y})Y)
&=&
\sum_{i=1}^N \langle X\sigma(\overline{x}) e_i, \sigma(\overline{x}) e_i \rangle
- \langle Y\sigma(\overline{y}) \tilde{e}_i, 
\sigma(\overline{y}) \tilde{e}_i \rangle
\nonumber\\
&\leq &
\sum_{i=1}^N \Psi'\langle \zeta_i,B\zeta_i\rangle
+ \Psi''\langle \zeta_i,(q\otimes q)\zeta_i\rangle+O(\varrho)
\nonumber\\
&\leq &
\Psi''\langle \zeta_1,(q\otimes q)\zeta_1\rangle
+ \sum_{i=1}^N \Psi'\langle \zeta_i,B\zeta_i\rangle+O(\varrho),
\end{eqnarray}
where we set $\zeta_i= \sigma(\overline{x}) e_i-\sigma(\overline{y}) \tilde{e}_i$
and noticing that 
$\Psi''\langle \zeta_i,(q\otimes q)\zeta_i\rangle =\Psi'' \langle \zeta_i, q\rangle^2
\leq 0$ since $\Psi$ is concave.

We now build suitable bases in two following cases.
In the case where $\sigma$ is degenerate, we choose any orthonormal basis such that $e_i= \tilde{e}_i.$ It follows
\begin{eqnarray*}
T&\leq & 
\sum_{i=1}^N \Psi'\langle (\sigma(\overline{x})-\sigma(\overline{y}))e_i 
,B(\sigma(\overline{x})-\sigma(\overline{y}))e_i\rangle+O(\varrho)\\
&\leq &
\Psi' N |\sigma(\overline{x})-\sigma(\overline{y})|^2|B|+O(\varrho)\\
&\leq & \Psi' N |\sigma_x|_\infty^2 |\overline{x}-\overline{y}|+O(\varrho)
\end{eqnarray*}
from~\eqref{cond-sig} and since $|B|\leq 1/|\overline{x}-\overline{y}|.$

When~\eqref{sig-deg} holds, i.e.,
$A(x)\geq \nu I$ for every $x,$ we can set
\begin{eqnarray*}
e_1= \frac{\sigma(\overline{x})^{-1}q}{|\sigma(\overline{x})^{-1}q|},
\quad \tilde{e}_1= -\frac{\sigma(\overline{y})^{-1}q}{|\sigma(\overline{y})^{-1}q|},
\quad \text{where $q$ is given by~\eqref{mat-bis}}.
\end{eqnarray*}
If $e_1$ and $\tilde{e}_1$ are collinear, 
then we complete the basis with orthogonal unit vectors 
$e_i=\tilde{e}_i\in e_1^\perp,$ $2\leq i\leq N.$ Otherwise, in the plane
${\rm span}\{e_1, \tilde{e}_1\},$ we consider a rotation $\mathcal{R}$
of angle $\frac{\pi}{2}$ and define
\begin{eqnarray*}
e_2=\mathcal{R}e_1, \quad 
\tilde{e}_2 =-\mathcal{R}\tilde{e}_1.
\end{eqnarray*}
Finally, noticing that ${\rm span}\{e_1, e_2\}^\perp
={\rm span}\{\tilde{e}_1, \tilde{e}_2\}^\perp,$ we can complete the orthonormal
basis with unit vectors $e_i=\tilde{e}_i\in {\rm span}\{e_1, e_2\}^\perp,$ $3\leq i\leq N.$

From~\eqref{sig-deg}, we have
\begin{eqnarray}\label{ineg-sig}
\nu \leq \frac{1}{|\sigma(x)^{-1}q|^2}\leq |\sigma|_\infty^2\le
C^2.
\end{eqnarray}
 It follows
\begin{eqnarray*}
\langle \zeta_1,(q\otimes q)\zeta_1\rangle =
\left( \frac{1}{|\sigma(\overline{x})^{-1}q|}
+\frac{1}{|\sigma(\overline{y})^{-1}q|} \right)^2\geq 4\nu.
\end{eqnarray*}
From~\eqref{mat-bis}, we deduce $Bq=0.$ 
Therefore
\begin{eqnarray*}
\langle \zeta_1,B\zeta_1\rangle =0.
\end{eqnarray*}
For $3\leq i\leq N,$ using~\eqref{cond-sig},
\begin{eqnarray*}
\langle \zeta_i,B\zeta_i\rangle
= \langle (\sigma(\overline{x})-\sigma(\overline{y}))e_i,
B(\sigma(\overline{x})-\sigma(\overline{y}))e_i\rangle
\leq |\sigma_x|_\infty^2 |\overline{x}-\overline{y}|\leq \sqrt{N} C^2.
\end{eqnarray*}
since $|B|\leq 1/|\overline{x}-\overline{y}|$ and $|\overline{x}-\overline{y}|\leq \sqrt{N}.$
We have
\begin{eqnarray*}
|\zeta_2|
= |(\sigma(\overline{x})-\sigma(\overline{y}))\mathcal{R}e_1 +
\sigma(\overline{y})\mathcal{R}(e_1+\tilde{e}_1)|
\leq C|\overline{x}-\overline{y}| + C |e_1+\tilde{e}_1|.
\end{eqnarray*}
It remains to estimate 
\begin{eqnarray*}
|e_1+\tilde{e}_1|
&\leq& \frac{1}{|\sigma_\theta(\overline{x})^{-1}q|}
|\sigma(\overline{x})^{-1}q-\sigma(\overline{y})^{-1}q|
+ |\sigma\overline{y})^{-1}q|
\left|\frac{1}{|\sigma(\overline{x})^{-1}q|}-\frac{1}{|\sigma(\overline{y})^{-1}q|}\right|\\
&\leq &
\frac{2|\sigma|_\infty |\sigma_x|_\infty^2}{\nu}  |\overline{x}-\overline{y}|
= \frac{2C^3}{\nu} |\overline{x}-\overline{y}|,
\end{eqnarray*}
from~\eqref{ineg-sig} and $|(\sigma^{-1})_x|_\infty\leq  |\sigma_x|_\infty^2/\nu.$

From~\eqref{estimT}, we finally obtain the conclusion
$T\leq 4\nu\Psi'' +\tilde{C}\Psi'+O(\varrho)$
where
\begin{eqnarray}\label{def-ctilde}
&& \tilde{C}=\tilde{C}(N,\nu,|\sigma|_\infty, |\sigma_x|_\infty)
:= C^2\sqrt{N}(N-2 +(1+\frac{2C^3}{\nu})^2)
\text{ with $C:=\max\{|\sigma|_\infty,|\sigma_x|_\infty\}$.}
\end{eqnarray}

\subsection{Comparison principles
for stationary systems}

The following results are stated in~\cite{bs01} in the case
of a scalar equation without control. We state them for systems.

\begin{thm} \label{comp-avec-lip}
Let $\psi^\e\in USC(\T^N)^2$ and $\phi^\e\in LSC(\T^N)^2$
be respectively a viscosity subsolution and supersolution
of~\eqref{syseps22intro}.
Assume that either $\psi^\e$ 
or $\phi^\e\in W^{1,\infty}(\T^N)^2.$
Then $\psi_i \leq \phi_i$ in $\T^N$ for $i=1,2.$
\end{thm}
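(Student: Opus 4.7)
The plan is to carry out the standard doubling-of-variables argument for viscosity comparison, exploiting the Lipschitz regularity of one of the two functions to bypass the usual Lipschitz-in-$x$ condition on the Hamiltonians $H_i$, a condition that fails here because of their possibly superlinear growth. This is the strategy introduced in~\cite{bs01} for scalar equations, with the obvious adjustment to accommodate the monotone coupling.

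Assume without loss of generality that $\phi^\e\in W^{1,\infty}(\T^N)^2$ with $|D\phi^\e_i|_\infty\le K$, and argue by contradiction that $M:=\max_i \max_{\T^N}(\psi^\e_i-\phi^\e_i)>0$, with the maximum attained at some index $i_0$, say $i_0=1$. For $\alpha>0$, consider
\[
M_\alpha:=\max_{(x,y)\in\T^N\times\T^N}\{\psi^\e_1(x)-\phi^\e_1(y)-\alpha|x-y|^2\},
\]
whose maximizers $(\bar x_\alpha,\bar y_\alpha)$ satisfy the classical properties $M_\alpha\to M$, $\alpha|\bar x_\alpha-\bar y_\alpha|^2\to 0$, and $\psi^\e_1(\bar x_\alpha)-\phi^\e_1(\bar y_\alpha)\to M$. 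Ishii's lemma yields $(p_\alpha,X_\alpha)\in\overline{J}^{2,+}\psi^\e_1(\bar x_\alpha)$ and $(p_\alpha,Y_\alpha)\in\overline{J}^{2,-}\phi^\e_1(\bar y_\alpha)$ with $p_\alpha=2\alpha(\bar x_\alpha-\bar y_\alpha)$ and the usual matrix inequality. The crucial point is that $p_\alpha$ lies in the subdifferential of the Lipschitz function $\phi^\e_1$ at $\bar y_\alpha$, whence $|p_\alpha|\le K$ uniformly in $\alpha$.

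Subtracting the viscosity inequalities for the first equation gives
\begin{eqnarray*}
&&\e\bigl(\psi^\e_1(\bar x_\alpha)-\phi^\e_1(\bar y_\alpha)\bigr)
-\bigl[{\rm trace}(A_1(\bar x_\alpha)X_\alpha)-{\rm trace}(A_1(\bar y_\alpha)Y_\alpha)\bigr]\\
&&+\bigl[H_1(\bar x_\alpha,p_\alpha)-H_1(\bar y_\alpha,p_\alpha)\bigr]\\
&&+\bigl[(\psi^\e_1(\bar x_\alpha)-\phi^\e_1(\bar y_\alpha))-(\psi^\e_2(\bar x_\alpha)-\phi^\e_2(\bar y_\alpha))\bigr]\le 0.
\end{eqnarray*}
I then let $\alpha\to\infty$: the first term tends to $\e M>0$; the trace difference is $O(\alpha|\bar x_\alpha-\bar y_\alpha|^2)\to 0$ by a computation analogous to the first estimate in Lemma~\ref{diff-tracet} applied to the test function $\Psi(s)=\alpha s^2$, using $\sigma_1$ Lipschitz; the Hamiltonian difference tends to $0$ by uniform continuity of $H_1$ on the compact set $\T^N\times\overline{B}(0,K)$, and it is precisely here that the bound $|p_\alpha|\le K$ is indispensable; the last bracket has nonnegative $\liminf$, because Lipschitz continuity of $\phi^\e_2$ yields $\psi^\e_2(\bar x_\alpha)-\phi^\e_2(\bar y_\alpha)\le (\psi^\e_2-\phi^\e_2)(\bar x_\alpha)+K|\bar x_\alpha-\bar y_\alpha|\le M+K|\bar x_\alpha-\bar y_\alpha|\to M$, while $\psi^\e_1(\bar x_\alpha)-\phi^\e_1(\bar y_\alpha)\to M$. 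The resulting inequality $\e M\le 0$ contradicts $M>0$, and the symmetric case where $\psi^\e$ is Lipschitz is treated identically.

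The main difficulty, and the whole reason for the Lipschitz hypothesis in the statement, is the uniform bound $|p_\alpha|\le K$. Without it, one could not hope to control $H_1(\bar x_\alpha,p_\alpha)-H_1(\bar y_\alpha,p_\alpha)$ since no structural assumption in $x$ has been imposed on $H_1$, and none could suffice in this generality given the possibly superlinear growth. Every other term is dealt with by routine estimates.
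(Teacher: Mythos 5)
Your proof is correct and is essentially the paper's: a doubling-of-variables argument in which the uniform bound $|p_\alpha|\le K$, coming from $p_\alpha\in\overline J^{2,-}\phi^\e_1(\bar y_\alpha)$ and the Lipschitz regularity of $\phi^\e$, lets the Hamiltonian difference vanish by uniform continuity on the compact $\T^N\times\overline B(0,K)$, without any structural hypothesis on $H_i$. The one small variation is that the paper takes the penalized maximum over both indices $j=1,2$ at once, so the coupling term $[\psi_i(\bar x)-\phi_i(\bar y)]-[\psi_{i+1}(\bar x)-\phi_{i+1}(\bar y)]$ is automatically nonnegative at the optimal index $i$, whereas you fix the index $i_0$ of the global maximum, double only that component, and invoke the Lipschitz hypothesis a second time to estimate the cross-term $\psi^\e_2(\bar x_\alpha)-\phi^\e_2(\bar y_\alpha)$ — both work.
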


A useful consequence of Theorem~\ref{comp-avec-lip} is
that, for any viscosity solution $\phi^\e$ of~\eqref{syseps22intro},
\begin{eqnarray} 
&& \label{eps-phi-borne}
|\e \phi_i^\e |_\infty\leq \mathop{\rm sup}_{j=1,2} |H_j(\cdot,0)|_\infty=:\mathcal{H},
\quad i=1,2.
\end{eqnarray}

\begin{proof}[Proof of Theorem~\ref{comp-avec-lip}]
We only give a sketch of proof since it is classical.
We suppose without loss of generality that
$\mathop{\rm sup}_i |D\psi_i|_\infty =:L<\infty.$
Set
\begin{eqnarray*}
&& M_0=\mathop{\rm max}_{x\in\T^N,\, j=1,2} \{\psi_j(x)-\phi_j(x)\},\\ 
&& M_\alpha=\mathop{\rm max}_{x,y\in\T^N,\, j=1,2} \{\psi_j(x)-\phi_j(y)
-\alpha^2|x-y|^2\}
=\psi_i(\overline{x})-\phi_j(\overline{y})
-\alpha^2|\overline{x}-\overline{y}|^2.
\end{eqnarray*}
We have some classical estimates
\begin{eqnarray*}
\alpha^2|\overline{x}-\overline{y}|^2\mathop{\to}_{\alpha\to \infty}0,
\quad
|p|\leq L \text{ where } p=2\alpha^2(\overline{x}-\overline{y}),
\quad
\mathop{\rm lim\,sup}_{\alpha\to \infty} M_\alpha =M_0.
\end{eqnarray*}
After subtracting  write the viscosity inequalities at $(\overline{x},\overline{y})$ of ~\eqref{syseps22intro}, we get
\begin{eqnarray*}
&& \e (\psi_i(\overline{x})-\phi_i(\overline{y}))
-{\rm trace}(A_{i}(\overline{x})X
- A_{i}(\overline{y})Y)
+H_{i}(\overline{x},p)-H_{i}(\overline{y},p)\}\\
&& \hspace*{8cm}+[\psi_i(\overline{x})-\phi_i(\overline{y})]-[\psi_{i+1}(\overline{x})-\phi_{i+1}(\overline{y})]
\leq 0.
\end{eqnarray*}
Since $\psi_j(\overline{x})-\phi_j(\overline{y})\leq \psi_i(\overline{x})-\phi_i(\overline{y}),$
we get $[\psi_i(\overline{x})-\phi_i(\overline{y})]-[\psi_{i+1}(\overline{x})-\phi_{i+1}(\overline{y})]\geq 0.$

\noindent Moreover $-{\rm trace}(A_{i}(\overline{x})X
- A_{i}(\overline{y})Y)
\geq o_\alpha(1)$.
Therefore $\e (\psi_i(\overline{x})-\phi_i(\overline{y}))
\leq o_\alpha(1),
$
this yields $M_0\leq 0$ as desired.
\end{proof}

In the proofs of Theorems~\ref{uni_grad} and~\ref{uni_gradsy}, we need
a discontinuous comparison result for truncated system when
$H_{i}$ in~\eqref{syseps22intro} is replaced by 
\begin{eqnarray}\label{H-approche-bis}
H_{in}:= \left\{ 
\begin{array}{ll}
H_{i}(x,p) & \text{if } |p|\leq n,\\
H_{i} (x,n\frac{p}{|p|}) & \text{if } |p|\geq n.
\end{array}
\right. 
\end{eqnarray}
Due to the truncation in the gradient variable, we do not need
to assume that the sub- or supersolution is Lipschitz continuous.
The proof follows the same lines as above.
We obtain

\begin{thm} \label{comp-discont}
Let $\psi\in USC(\T^N)^2$ and $\phi\in LSC(\T^N)^2$
be respectively a viscosity subsolution and supersolution
of~\eqref{syseps22intro} with $H_{in}$ defined by~\eqref{H-approche-bis}.
Then $\psi_i \leq \phi_i$ in $\T^N$ for all $i.$
\end{thm}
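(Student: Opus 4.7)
The plan is to adapt the argument of Theorem~\ref{comp-avec-lip}, exploiting the fact that the truncation in~\eqref{H-approche-bis} turns each $H_{in}$ into a bounded, uniformly continuous function on $\T^N\times\R^N$, which removes the need to assume Lipschitz regularity of one of the two semisolutions. More precisely, since $H_i\in C(\T^N\times\R^N)$ by~\eqref{cond-sig} and $\T^N\times\overline{B}(0,n)$ is compact, $H_{in}$ admits a modulus of continuity $\omega_n$ such that
\begin{eqnarray*}
|H_{in}(x,p)-H_{in}(y,p)|\leq \omega_n(|x-y|),\qquad x,y\in\T^N,\ p\in\R^N.
\end{eqnarray*}

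First, I would introduce
\begin{eqnarray*}
M_0=\max_{x\in\T^N,\,j=1,2}\{\psi_j(x)-\phi_j(x)\},\qquad
M_\alpha=\max_{x,y\in\T^N,\,j=1,2}\{\psi_j(x)-\phi_j(y)-\alpha^2|x-y|^2\},
\end{eqnarray*}
which are attained thanks to the semi-continuity of $\psi_j$ and $-\phi_j$ on the compact torus. Denote by $(\bar x,\bar y,i)$ a maximizer of $M_\alpha$. The standard penalization estimates give $\alpha^2|\bar x-\bar y|^2\to 0$ and $\limsup_{\alpha\to\infty}M_\alpha=M_0$; the index $i$ can also be assumed constant along a subsequence.

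Next, I would invoke Ishii's lemma for systems to produce, for every $\varrho>0$, matrices $X,Y$ with $(p,X)\in\overline{J}^{2,+}\psi_i(\bar x)$ and $(p,Y)\in\overline{J}^{2,-}\phi_i(\bar y)$, where $p=2\alpha^2(\bar x-\bar y)$, together with the standard estimate
\begin{eqnarray*}
-{\rm trace}(A_i(\bar x)X-A_i(\bar y)Y)\geq -C\,\alpha^2|\bar x-\bar y|^2|\sigma_{i,x}|_\infty^2+O(\varrho)=o_\alpha(1)+O(\varrho).
\end{eqnarray*}
Writing the subsolution inequality at $\bar x$ and the supersolution inequality at $\bar y$ for the $i$th equation of the truncated system and subtracting yields
\begin{eqnarray*}
\e(\psi_i(\bar x)-\phi_i(\bar y))+[\psi_i(\bar x)-\phi_i(\bar y)]-[\psi_{i+1}(\bar x)-\phi_{i+1}(\bar y)]\\
\leq {\rm trace}(A_i(\bar x)X-A_i(\bar y)Y)+H_{in}(\bar y,p)-H_{in}(\bar x,p).
\end{eqnarray*}
By the choice of $i$ as an argmax of $M_\alpha$, the coupling bracket is nonnegative, and by the uniform modulus $\omega_n$ the Hamiltonian difference is bounded below by $-\omega_n(|\bar x-\bar y|)$, which is $o_\alpha(1)$.

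Combining these estimates, sending first $\varrho\to 0$ and then $\alpha\to\infty$, I obtain $\e\,M_0\leq 0$, hence $M_0\leq 0$, which is exactly the claim $\psi_i\leq\phi_i$ in $\T^N$ for $i=1,2$. The main (and essentially only) delicate point of the argument is the observation that the truncation produces a uniform $x$-modulus of continuity for $H_{in}$, uniformly in $p\in\R^N$; this is precisely what allows us to dispense with the Lipschitz assumption on $\psi$ or $\phi$ needed in Theorem~\ref{comp-avec-lip}.
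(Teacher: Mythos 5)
Your proof is correct and follows the same strategy as the paper's, which only sketches the argument (``Due to the truncation in the gradient variable, we do not need to assume that the sub- or supersolution is Lipschitz continuous. The proof follows the same lines as above.''). The key observation you make explicit --- that truncation yields a uniform $x$-modulus of continuity for $H_{in}$, uniformly in $p\in\R^N$, which replaces the role of the gradient bound $|p|\le L$ provided by the Lipschitz assumption in Theorem~\ref{comp-avec-lip} --- is precisely what the paper leaves to the reader.
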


\subsection{Comparison for the evolutive problem}

\begin{thm} \label{comp-cauchy-lip}
Let $u\in USC(\T^N\times [0,+\infty))^2$ and $v\in LSC(\T^N\times [0,+\infty))^2$
be respectively a viscosity subsolution and supersolution
of~\eqref{C}.
Assume that either $u(\cdot ,t)$ or
$v(\cdot ,t)\in W^{1,\infty}(\T^N)^2$ uniformly in $t\in [0,+\infty).$
Then
\begin{eqnarray}\label{form-comp-evol}
&& u_i(x,t)-v_i(x,t)\leq \mathop{\rm sup}_{y\in\T^N, j=1,2} (u_j(y,0)-v_j(y,0))^+,
\ (x,t)\in \T^N\times [0,+\infty), \quad i=1,2.
\end{eqnarray}
\end{thm}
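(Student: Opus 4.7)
The plan is to adapt the doubling-variables argument of the stationary comparison Theorem~\ref{comp-avec-lip} to the parabolic setting, exploiting the monotonicity of the coupling to treat the system termwise. By the symmetric roles of $u$ and $v$ I may assume without loss of generality that $u(\cdot,t)$ is Lipschitz in $x$ uniformly in $t$, with constant $L$. Setting $K_0:=\mathop{\rm sup}_{y\in\T^N,\, j=1,2}(u_j(y,0)-v_j(y,0))^+$, I would argue by contradiction: if~\eqref{form-comp-evol} fails, there exist $\eta>0$ and $T$ large enough such that
\[
M:= \mathop{\rm sup}_{x\in\T^N,\, t\in[0,T],\, j=1,2}\{u_j(x,t)-v_j(x,t)-\eta t-K_0\} >0.
\]

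The first step is the standard doubling
\[
M_\alpha := \max_{x,y\in\T^N,\, t,s\in[0,T],\, j=1,2}\{u_j(x,t)-v_j(y,s)-\alpha^2|x-y|^2-\alpha^2(t-s)^2-\eta t-K_0\},
\]
achieved at some $(\overline{x},\overline{y},\overline{t},\overline{s},i^*)$ by upper/lower semicontinuity. Classical estimates give $\alpha^2|\overline{x}-\overline{y}|^2,\alpha^2(\overline{t}-\overline{s})^2\to 0$ and $M_\alpha\to M>0$ as $\alpha\to\infty$. The Lipschitz property of $u$ then yields the crucial uniform bound $|p|:=2\alpha^2|\overline{x}-\overline{y}|\le 2L$, since $\alpha^2|\overline{x}-\overline{y}|^2\le u_{i^*}(\overline{x},\overline{t})-u_{i^*}(\overline{y},\overline{t})\le L|\overline{x}-\overline{y}|$. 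For $\alpha$ large, $\overline{t}=T$ is excluded by the penalty $-\eta t$ combined with $M_\alpha>0$, while $\overline{t}=0$ would force, upon passing to the limit, $M\le u_{i^*}(x_\infty,0)-v_{i^*}(x_\infty,0)-K_0\le 0$, a contradiction; hence $\overline{t},\overline{s}\in (0,T)$ for $\alpha$ large enough.

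The second step is to apply the parabolic Ishii lemma (\cite[Theorem~8.3]{cil92}) to obtain, for every $\varrho>0$, reals $a,b$ with $a-b=\eta$ and symmetric matrices $X,Y$ satisfying the usual inequality~\eqref{mat}, such that $(a,p,X)\in\overline{P}^{2,+}u_{i^*}(\overline{x},\overline{t})$ and $(b,p,Y)\in\overline{P}^{2,-}v_{i^*}(\overline{y},\overline{s})$. Writing the $i^*$-th sub- and supersolution inequalities of~\eqref{C} and subtracting produces
\[
\eta -{\rm trace}\bigl(A_{i^*}(\overline{x})X-A_{i^*}(\overline{y})Y\bigr)+ H_{i^*}(\overline{x},p)-H_{i^*}(\overline{y},p)+\Delta \le 0,
\]
with $\Delta=[u_{i^*}(\overline{x},\overline{t})-v_{i^*}(\overline{y},\overline{s})]-[u_{i^*+1}(\overline{x},\overline{t})-v_{i^*+1}(\overline{y},\overline{s})]$ and indices mod $2$. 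The maximality of $i^*$ in $M_\alpha$ gives $\Delta\ge 0$, so this coupling difference is simply dropped---here the monotonicity of the system is used exactly as in the proof of Theorem~\ref{comp-avec-lip}. Lemma~\ref{diff-tracet} then controls the trace difference from below by $O(\varrho)+o_\alpha(1)$, while continuity of $H_{i^*}$ on the compact set $\T^N\times\overline{B}(0,2L)$ together with $|\overline{x}-\overline{y}|\to 0$ yields $H_{i^*}(\overline{x},p)-H_{i^*}(\overline{y},p)=o_\alpha(1)$. Sending $\varrho\to 0$ and then $\alpha\to\infty$ leaves $\eta\le 0$, a contradiction.

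The main obstacle is precisely the absence of classical structure conditions on the $H_i$ (they may be superlinear in $p$), which is why one of the functions has to be Lipschitz: this Lipschitz bound confines the momentum $p$ at the doubled maximum to a fixed compact set, so that mere continuity of $H_{i^*}$ in $x$ uniformly on compacts in $p$, automatic from~\eqref{cond-sig}, suffices to close the argument. Once the contradiction is reached for every $\eta>0$ and every $T>0$, the estimate~\eqref{form-comp-evol} follows on $\T^N\times[0,+\infty)$.
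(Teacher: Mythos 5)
Your argument is correct and is precisely the ``easy adaptation'' of Theorem~\ref{comp-avec-lip} that the paper has in mind (the paper explicitly omits the details). Two minor slips, neither affecting the conclusion: the penalty $-\eta t$ does not by itself exclude $\overline{t}=T$, but no exclusion is needed since the equations hold on the open time interval $(0,+\infty)$ and $T$ is an interior point, so the sub- and supersolution inequalities are available there; and the trace estimate should invoke the standard bound for Lipschitz $\sigma$ with a quadratic penalty (as in the proof of Theorem~\ref{comp-avec-lip}) rather than Lemma~\ref{diff-tracet}, whose proof uses concavity of the test function $\Psi$, although the resulting $o_\alpha(1)+O(\varrho)$ bound is the same.
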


\begin{thm} \label{comp-discont-evol}
Let $u\in USC(\T^N\times [0,+\infty))^2$ and $v\in LSC(\T^N\times [0,+\infty))^2$
be respectively a viscosity subsolution and supersolution
of~\eqref{C} with $H_{in}$ defined by~\eqref{H-approche-bis}.
Then \eqref{form-comp-evol} holds.
\end{thm}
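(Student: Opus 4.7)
The plan is to adapt the doubling-of-variables proof of Theorem~\ref{comp-avec-lip} to the parabolic setting, exploiting the fact that for each fixed $n$ the truncated Hamiltonian $H_{in}$ is bounded on $\T^N\times\R^N$ and, being the composition of the continuous $H_i$ with a projection onto the compact ball $\overline{B}(0,n)$, is uniformly continuous in $x$ uniformly in $p$. Because of the truncation, no Lipschitz regularity on $u$ or $v$ will be needed, which is precisely the point that makes this result different from Theorem~\ref{comp-cauchy-lip}.

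Fix $T>0$ and set $M_0:=\mathop{\rm sup}_{y\in\T^N,\,j=1,2}(u_j(y,0)-v_j(y,0))^+$. I argue by contradiction, supposing $u_{i_0}(x_0,t_0)-v_{i_0}(x_0,t_0)>M_0$ at some $(x_0,t_0,i_0)\in \T^N\times(0,T]\times\{1,2\}$ and choosing $\eta>0$ so small that this strict inequality still holds after subtracting $\eta/(T-t_0)$. I then consider
\begin{equation*}
\Phi_{\alpha,\beta}(x,y,t,s,j)=u_j(x,t)-v_j(y,s)-\alpha^2|x-y|^2-\beta(t-s)^2-\frac{\eta}{T-t}
\end{equation*}
on $(\T^N)^2\times[0,T)^2\times\{1,2\}$. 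The penalty $\eta/(T-t)$ keeps the maximum away from $t=T$, and classical estimates give $\overline{t},\overline{s}>0$ together with $\alpha^2|\overline{x}-\overline{y}|^2+\beta(\overline{t}-\overline{s})^2\to 0$ as $\alpha,\beta\to\infty$. Since the index $\overline{\imath}$ realizing the maximum lives in the finite set $\{1,2\}$, I pass to a subsequence on which it is constant.

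The key algebraic observation, identical to the one used in the proof of Theorem~\ref{comp-avec-lip}, is that the maximization in $j$ forces
\begin{equation*}
[u_{\overline{\imath}}(\overline{x},\overline{t})-v_{\overline{\imath}}(\overline{y},\overline{s})]-[u_{\overline{\imath}+1}(\overline{x},\overline{t})-v_{\overline{\imath}+1}(\overline{y},\overline{s})]\ge 0,
\end{equation*}
with the convention $u_3=u_1$, $v_3=v_1$, so the linear coupling term enters the subtracted viscosity inequalities with the good sign. The parabolic Crandall--Ishii lemma then produces parabolic jets $(a,p,X)\in\overline{P}^{2,+}u_{\overline{\imath}}(\overline{x},\overline{t})$ and $(b,p,Y)\in\overline{P}^{2,-}v_{\overline{\imath}}(\overline{y},\overline{s})$ with $p=2\alpha^2(\overline{x}-\overline{y})$, $a-b=\eta/(T-\overline{t})^2$, and $(X,Y)$ satisfying~\eqref{mat}--\eqref{mat-ter} applied with $\Psi(r)=\alpha^2 r^2$. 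Subtracting the two viscosity inequalities for the $\overline{\imath}$-th equation yields
\begin{equation*}
\frac{\eta}{(T-\overline{t})^2}-{\rm trace}\bigl(A_{\overline{\imath}}(\overline{x})X-A_{\overline{\imath}}(\overline{y})Y\bigr)+H_{\overline{\imath}n}(\overline{x},p)-H_{\overline{\imath}n}(\overline{y},p)\le 0.
\end{equation*}

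To conclude, the trace difference is $o_\alpha(1)$ by the first (degenerate) estimate of Lemma~\ref{diff-tracet}, since with $\Psi(r)=\alpha^2r^2$ that bound reads $-N|\sigma_x|_\infty^2\,2\alpha^2|\overline{x}-\overline{y}|^2+O(\varrho)$ and $\alpha^2|\overline{x}-\overline{y}|^2\to 0$. The Hamiltonian difference is bounded by a modulus $\omega_n(|\overline{x}-\overline{y}|)\to 0$ thanks to the uniform continuity of $H_i$ on the compact set $\T^N\times\overline{B}(0,n)$ (the truncation confines the effective momentum to this ball regardless of how large $|p|$ is). Letting $\varrho\to 0$, then $\beta\to\infty$, then $\alpha\to\infty$ leaves $\eta/T^2\le 0$, the desired contradiction. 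The step I expect to require the most care is the standard but tedious verification that the index $\overline{\imath}$ and the times $\overline{t},\overline{s}$ behave well along joint limits of $\alpha,\beta$; once one fixes $\overline{\imath}$ via the finite-index argument above, the remainder of the parabolic doubling is routine.
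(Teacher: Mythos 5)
Your argument is correct and is exactly the ``easy adaptation'' of Theorem~\ref{comp-avec-lip} and Theorem~\ref{comp-discont} that the paper alludes to: parabolic doubling with a $\eta/(T-t)$ penalty, the finite-index argument, the coupling-sign observation, and the key point that the truncation makes $H_{in}(\cdot,p)$ equicontinuous in $x$ uniformly over $p$, which is what removes the need for Lipschitz regularity of $u$ or $v$. One small slip worth noting: you invoke the degenerate estimate of Lemma~\ref{diff-tracet} with $\Psi(r)=\alpha^2 r^2$, but that lemma (and its proof, which discards the $\Psi''\langle\zeta_1,q\rangle^2$ term) is stated for \emph{concave} $\Psi$; with $\Psi(r)=\alpha^2 r^2$ the matrix $A$ in~\eqref{mat-ter} collapses to the standard Crandall--Ishii matrix $2\alpha^2\left(\begin{smallmatrix}I&-I\\-I&I\end{smallmatrix}\right)$, and the correct reference is the classical trace estimate $-{\rm trace}(A(\overline{x})X-A(\overline{y})Y)\ge -C\alpha^2|\overline{x}-\overline{y}|^2+O(\varrho)=o_\alpha(1)$, which in any case gives the same conclusion.
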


The proofs are easy adaptations of the proofs of Theorems~\ref{comp-avec-lip}
and~\ref{comp-discont} in the case of degenerate parabolic systems.

\subsection{A Control-theoretic interpretation for~\eqref{sys-sous-sur124}}
\label{sec:control_provi}

We can give an interpretation of weakly coupled systems as 
dynamic programming equations of hybrid systems with pathwise
stochastic trajectories with random switching, see
\cite{fz98}.

Let  $ (\Omega, \mathcal{F}, (\mathcal{F}_t)_{t\geq 0}, {\P})$ be a
filtered probability space, $W_t$ be a $\mathcal{F}_t$-adapted
standard $N$-Brownian motion such that $W_0=0$ a.s.
Consider the controlled random evolution process
$(X_t,\nu_t)$ with dynamics
\begin{equation*}
\left\{
\begin{array}{l}
dX_t =  b_{\theta_t \nu_t}(X_t)dt+\sqrt{2}\, \sigma_{\nu_t}(X_t)dW_t, \ \ t> 0,\\
(X_0,\nu_0)=(x,i) \in \T^N\times \{1,2\},
\end{array}
\right.
\end{equation*}
where the control law  $\theta_t :[0,\infty)\to \Theta$ is a measurable function.

For every $\theta_t$
there exists a $\mathcal{F}_t$-adapted solution $(X_t,\nu_t),$ 
where $X_t:[0,\infty)\to \T^N$
is piecewise $C^1$ and $\nu_t$  is a continuous-time Markov chain
with state space $\{1,2\}$ and probability transitions given by
\begin{eqnarray}\label{trans-prob}
&& 
\begin{array}{l}
\P\{\nu_{t+\Delta t}=2\,|\, \nu_t=1\}=\Delta t+o(\Delta t),\\
\P\{\nu_{t+\Delta t}=1\,|\, \nu_t=2\}=\Delta t+o(\Delta t).
\end{array}
\end{eqnarray}

We introduce the value functions of the optimal control problems
\begin{equation*}
    u_i(x,t)=\inf_{\theta_t\in L^\infty([0,t],\Theta)}\E_{x,i}
\{\int_0^t f_{\theta_s \nu_s}(X_s)ds
+u_{0 \nu_t} (X_t)\},
\quad i=1,2,
\end{equation*}
where $\E_{x,i}$ denote the expectation of a trajectory starting at
$x$ in the mode $i.$

The function $u=(u_1,u_2)$ satisfies the system
\begin{eqnarray*}
\left\{
  \begin{array}{l}
\displaystyle
\frac{\partial u_1}{\partial t}
-{\rm trace}(\sigma_{1}(x)\sigma_{1}(x)^TD^2 u_1)
+ \mathop{\rm sup}_{\theta\in \Theta}\{
-\langle b_{\theta 1}(x), Du_1\rangle -f_{\theta 1}(x)\}
+ u_1 -u_2= 0 \\[5pt]
\displaystyle
\frac{\partial u_2}{\partial t}
-{\rm trace}(\sigma_{2}(x)\sigma_{2}(x)^TD^2 u_2)
+ \mathop{\rm sup}_{\theta\in \Theta}\{
-\langle b_{\theta 2}(x), Du_2\rangle -f_{\theta 2}(x)\}
+ u_2 -u_1= 0 \\[5pt]
u_{1}(x,0)=u_{0 1}(x), \ u_{2}(x,0)=u_{0 2}(x).
  \end{array}
\right.
\end{eqnarray*}
By choosing $\Theta=\R^N,$
setting $A_i=\sigma_i\sigma_i^T,$ $i=1,2$ and assuming that 
$b_{\theta 1}, f_{\theta 1}, u_{0 1}, u_{0 2}$ satisfy~\eqref{regb123},
$b_{\theta 2}=2\theta$ and $f_{\theta 2}=|\theta|^2-f_2$
with $f_2$ continuous,
we obtain~\eqref{sys-sous-sur124} with $a_2=1$ and $\alpha=1.$
If moreover $A_1>0,$ then 
the first equation is uniformly parabolic of sublinear 
type and the second one is
possibly degenerate superlinear with a quadratic Hamiltonian. 
The assumptions of Theorems~\ref{uni_gradsy},~\ref{thm:ergod}
and~\ref{LTB_provi} hold. 

Roughly speaking, the Lipschitz regularization of $u_1$ is provided by the nondegenerate
diffusion in mode 1 whatever the bounded drift does. While the Lipschitz regularity of
$u_2$ comes from the controlable unbounded drift in mode 2 even if the diffusion degenerates.
Assumption~\eqref{partition-tore} is obviously satisfied
since $\nu_1(x):= {\rm min}_{|\xi|=1} \langle A_1(x)\xi,\xi\rangle >0$ for all $x\in\T^N.$
So the strong maximum principle holds and we have the
convergence when $t\to +\infty.$ The point is that the nondegeneracy of $A_1$ implies
also the convergence for $u_2.$
This seems to be due to the combined effects of the nondegenerate Brownian motion in
mode 1 together with the number of switchings which tends to $+\infty$ as
$t\to +\infty$ (since the transition probabilities~\eqref{trans-prob} are positive
and independent of $t$). It follows that the process visits any open subset of
$\T^N$ in each mode as $t\to +\infty$ yielding the convergence to an equilibrium
state for $(u_1,u_2).$  


\vspace*{-0.1cm}

\end{document}